\documentclass[onefignum, onetabnum]{siamonline220329}
\usepackage{braket,amsfonts}
\usepackage{array}
\usepackage[caption=false]{subfig}
\usepackage{ntheorem}
\newsiamthm{claim}{Claim}
\newsiamremark{remark}{Remark}
\newsiamremark{hypothesis}{Hypothesis}
\crefname{hypothesis}{Hypothesis}{Hypotheses}
\usepackage{graphicx}
\Crefname{ALC@unique}{Line}{Lines}
\usepackage{amsopn}

\usepackage{amsmath,amssymb}
\usepackage{color}
\usepackage{tikz}
\usetikzlibrary{plotmarks}
\usepackage{pgfplots}
\pgfplotsset{compat=1.16} 
\usepackage{siunitx} 
\usepackage{url}
\usepackage{algorithmic}
\usepackage{algorithm}
\usepackage{dsfont}

\newcommand{\N}{\mathbb{N}}
\newcommand{\R}{\mathbb{R}}

\newcommand{\dd}[1]{\frac{\partial}{\partial #1}}

\newcommand{\dx}{\, \textnormal{d}}
\newcommand{\ind}[1]{\mathds{1}_{#1}}
\newcommand{\chr}[1]{\chi_{#1}}
\title{Dynamic image reconstruction with motion priors in application to 3D magnetic particle imaging \thanks{\funding{T. Kluth acknowledges funding by the German Research Foundation (DFG, Deutsche Forschungsgemeinschaft) - project 426078691.}}
}

\author{Christina Brandt \thanks{Department of Mathematics, Universität Hamburg, 20146 Hamburg, Germany (\email{christina.brandt@uni-hamburg.de}, \email{lena.westen@uni-hamburg.de}).}
\and Tobias Kluth \thanks{Center for Industrial Mathematics, University of Bremen, 28359 Bremen, Germany (\email{tkluth@math.uni-bremen.de}).}
\and Tobias Knopp \thanks{Institute for Biomedical Imaging, Hamburg University of Technology and University Medical Center Hamburg-Eppendorf, 22529 Hamburg, Germany (\email{tobias.knopp@tuhh.de})}
\and Lena Westen \footnotemark[2]}

\headers{Dynamic 3D magnetic particle imaging}{Christina Brandt, Tobias Kluth, Tobias Knopp and Lena Westen}


\begin{document}
\maketitle

\begin{abstract}
Various imaging modalities allow for time-dependent image reconstructions from measurements where its acquisition also has a time-dependent nature. Magnetic particle imaging (MPI) falls into this class of imaging modalities and it thus also provides a dynamic inverse problem. Without proper consideration of the dynamic behavior, motion artifacts in the reconstruction become an issue. More sophisticated methods need to be developed and applied to the reconstruction of the time-dependent sequences of images. In this context, we investigate the incorporation of motion priors in terms of certain flow-parameter-dependent PDEs in the reconstruction process of time-dependent 3D images in magnetic particle imaging. The present work comprises the method development for a general 3D+time  setting for time-dependent linear forward operators, analytical investigation of necessary properties in the MPI forward operator, modeling aspects in dynamic MPI, and extensive numerical experiments on 3D+time imaging including simulated data as well as measurements from a rotation phantom and in-vivo data from a mouse.  

\end{abstract}

\begin{keywords}
Dynamic 3D image reconstruction, motion models, joint parameter identification, magnetic particle imaging 
\end{keywords}

\begin{MSCcodes}
94A08, 65K10, 92C50

\end{MSCcodes}


\section{Introduction}

Dynamic inverse problems emerge naturally in several imaging modalities (such as CT, MRI, SPECT, magnetic particle imaging (MPI), etc.) as often both, cause and observation, show a time-dependent behavior. Due to this fact, the field of dynamic inverse problems has been driven strongly by tomographic applications in the past. The dynamic behavior of the object in the time-dependent data acquisition process often leads to motion artifacts when using standard reconstruction techniques being designed for static problems. 
Image quality then suffers such that 
dedicated methods taking into account the dynamic nature of the problem need to be used and further developed in this setting.

A general regularization framework for dynamic inverse problems is still not established, but several promising attempts are made in this direction.  
Approaches exploiting concepts like, for example, temporal smoothness \cite{schmitt1,schmitt2}, dynamic programming techniques \cite{leitao1}, explicit deformation models \cite{Hahnnonlinear}, or operator inexactness \cite{blanke2020inverse} have been proposed.    
One rather general approach for time-dependent parameter identification in dynamic systems has been made in \cite{Kaltenbacher17,Tram19} by distinguishing reduced and all-at-once approaches.

For specific imaging modalities several methods have been proposed in the literature. Depending on the characteristics of the considered modality, approaches are either based on a variational formulation (e.g., \cite{Gris19,Kluth2019IWMPI,zhang,DynamicMRI}),  
exact analytic methods (e.g., \cite{exact1,exact3,Hahnaffine}), iterative methods (e.g., \cite{iterative2,iterative1}), approximate inversion formulas (e.g., \cite{Arridge2022,hahn2017motion,katsevich_accurate,katsevich11}), and recently on machine learning techniques (e.g., \cite{hauptmann2019real,ilg2017flownet, LI2022}). One explicit way to compensate for the dynamics is the incorporation of 
explicit motion models such as deformation, optical flow, etc. in the image reconstruction process, which then require the identification of motion parameters prior to or within the reconstruction step (see, e.g., \cite{Burger17,Burger.2018,Hahn_estimation,katsevich11,lu_mackie,Lucka2018enhancing,motion_registration_manke,reyes}). 
For a more detailed review on variational approaches taking into account motion priors and also learning-based approaches, we refer to the excellent recent survey \cite{hauptmann2021image}. 

The particular imaging modality being considered in the present work is MPI \cite{Gleich2005}. It is a tracer-based imaging modality exploiting the magnetization behavior of magnetic nanoparticles in a highly dynamic applied magnetic field, see also the surveys \cite{Kluth2018a,knopp2017review} for an overview. 
From a mathematical point of view, the forward operator for static images is given by a linear Fredholm integral operator of the first kind, mapping a spatially distributed concentration (image) to a time-dependent voltage measurement. 
The static image reconstruction problem is already a severely ill-posed problem \cite{erb2018mathematical,Kluth2018b}, which becomes even more ill-posed in the dynamic scenario.
So far, MPI has been used primarily for pre-clinical medical applications but it holds a significant potential for clinical applications illustrated by an increasing number of potential applications, which also rely on the solution of dynamic image reconstruction problems.
One application, already suggested at the very beginning, is vascular imaging \cite{Gleich2005}.
In addition, in in-vivo experiments, the potential for imaging blood flow was demonstrated using healthy mice \cite{weizenecker2009three}. 
Further promising applications include long-term circulating tracers \cite{khandhar2017evaluation}, tracking medical instruments \cite{haegele2012magnetic},
e.g., in angioplasty \cite{Salamon:2016cz}, cancer detection \cite{Yu2017}, cancer treatment by hyperthermia \cite{murase2015usefulness}, and stroke monitoring \cite{graser2019human,ludewig2022magnetic}.

The time-dependent measurement process of MPI together with dynamic imaging applications requires the consideration of problems across temporal scales. In the field of MPI, a few works considered the dynamic image reconstruction problem therein so far. The general dynamic setting has been investigated from a modeling point of view in \cite{brandt2021modeling}. A reconstruction  approach designed for periodic motion \cite{Gdaniec2017,Gdaniec.2020} has been proposed which is restricted to a limited class of motion only. More recently, temporal splines have been included in the reconstruction process to increase the temporal resolution \cite{brandt2022motion}. Motion models have been used for extracting optical flow information from previously obtained image reconstructions \cite{Franke2017}. 

The present work considers the dynamic 3D image reconstruction problem in the imaging modality MPI. 
In particular, we consider a variational approach taking into account motion priors which allow for a simultaneous identification of parameters therein. This paper is motivated by previous work for time-dependent 2D images in MPI that was published in \cite{Kluth2019IWMPI} as a short abstract. The numerical treatment can be interpreted as a special case of the general all-at-once approach discussed in \cite{Kaltenbacher17,Tram19}. In contrast to the prior work, we particularly consider the 3D case and focus further on the analysis of the MPI model in the framework of the general joint reconstruction approach. Specific choices of motion priors are an optical flow constraint being suitable, for example, for instrument tracking and mass preservation, e.g. in the case of for blood flow visualization. 
The theoretical part is complemented by the algorithmic solution via alternating minimization using (stochastic) primal-dual methods combined with multi-scale schemes and an intense numerical study taking into account simulated as well as measured 3D phantoms and in-vivo experiments evolving in time.


The manuscript is structured as follows: 
In \Cref{sec:problemsetting} the general mathematical setting of the dynamic reconstruction approach is considered and the corresponding theoretical investigation of the MPI model is provided. \Cref{sec:algorithm} provides the numerical solution to the underlying minimization problem and in \Cref{sec:numerical} the performance of the proposed method is illustrated in a series of numerical experiments. The manuscript concludes with a conclusion and an outlook in \Cref{sec:conclusion}.

\section{Mathematical problem setting and solution approach}
\label{sec:problemsetting}
We start with a precise definition of the problem and the theoretical investigation of the underlying model. In \Cref{subsec:genproblem} we fix the general setting and define the problem to be solved. Moreover, we state the theorem for existence of minimizers of our problem. In the following \Cref{subsec:jointMPIreco}, we introduce a model for the forward operator of magnetic particle imaging and prove compactness as well as some required regularity assumptions. \Cref{subsec:dynamicMPI} then outlines differences between static and dynamic MPI and derives the associated problems. 

\subsection{General joint image reconstruction and motion estimation problem}
\label{subsec:genproblem}
In the following, we consider a measurable time-dependent image function $c$ on a bounded space-time domain $\Omega \times \left[ 0,T\right] \subset \mathbb{R}^n \times \mathbb{R}^{+}$, $n \in \left\lbrace 1,2,3\right\rbrace $, $c \in L^p\left( 0, T; BV\left( \Omega\right) \right) $, $p>1$ . We choose the space of functions with bounded variation, as those can describe edges in images and are well-suited for images with large homogeneous regions. Both features are particularly important for a wide range of medical images, which is one important field of application we have in mind. 

We aim at reconstructing this image function from measured data $u: \left[ 0,T\right]  \rightarrow Y$, $u \in L^2\left( 0,T; Y\right) $ for a reflexive Banach space $Y$, which is obtained by inserting $c$ as an argument into the first component of an operator $A: L^{\hat{p}}\left(0,T;L^l\left( \Omega\right)\right) \times \left[ 0,T\right]  \rightarrow L^2\left(0,T; Y\right)$, $\hat{p}=\min(2,p)$, $l\leq \frac{n}{n-1}$, and corrupting the results with random noise $\delta \in L^2\left(0,T; Y\right)$, i.e. 
 \begin{equation}
 	A\left(c,t\right) + \delta\left( t \right) = u\left( t \right), \quad t \in \left[ 0,T \right].
 \end{equation}
 In the present work, we consider $A(\cdot,t)$ to be a linear and bounded operator for any $t\in [0,T]$. 
Note that time-dependency enters our inverse problem for two reasons: first, the image to be recovered depends on time and second, the forward operator is time-dependent
as in other imaging applications such as in blurring  of signals with time-dependent convolution kernels or time-dependent Radon data in dynamic CT.

Suppose we consider an ill-posed image reconstruction task, such that we add a spatial regularizer $R: BV\left( \Omega\right) \rightarrow \mathbb{R}$, which is assumed to be proper, convex and lower semicontinuous and fulfills 
\begin{equation}
	\label{eq:desC}
	R\left(x\right) \geq \left| x \right| _{BV}^p \qquad \mathrm{for \enspace any \enspace} x \in \mathrm{BV}\left(\Omega\right) .
\end{equation} 
In addition to the image reconstruction task, we aim at simultaneously determining a velocity field $v : \Omega \times \left[ 0,T\right] \rightarrow \mathbb{R}^{n},$ $v \in L^q\left( 0, T; BV\left( \Omega\right)^n \right)  $, $q>1$, describing the motion in the data. We incorporate $v$ into our model by using a motion model $m\left( c, v\right) $ linking images and motion as an additional constraint. Depending on the underlying assumptions, this will be either an optical flow term or a mass conservation constraint. Again, motion estimation is an ill-posed task in general such that we add a proper, convex and lower semicontinuous regularizer $S: BV\left( \Omega\right)^n \rightarrow \mathbb{R} $ working on the motion field. We will restrict ourselves to regularizers which again fulfill 
\begin{equation}
	\label{eq:desS}
	S\left( y\right) \geq \left| y \right| _{BV}^q \qquad \mathrm{for \enspace any \enspace} y \in \mathrm{BV}\left(\Omega\right)^n .
\end{equation}  
The joint image reconstruction and motion estimation problem can then be described as 
\begin{align}
	\label{eq:min_prob1} 
 \min_{c,v} &\displaystyle \int_{0}^{T} D\left( A\left( c, t\right), u\left( t\right) \right) + \alpha R\left( c(\cdot,t)\right) + \beta S\left( v(\cdot,t)\right) \mathrm{d}t,\\ 
	\label{eq:nb} & \mathrm{s. t. } \enspace  m\left( c, v\right) =0 \quad \text{in } \mathcal{D}'(\Omega \times [0,T]), 
\end{align}
 where $D: Y \times Y \rightarrow \mathbb{R}$ denotes a proper, convex and lower semicontinuous discrepancy term between measured and modeled data and $\mathcal{D}'(\Omega \times [0,T])$ denotes the space of distributions, i.e. of compactly supported $C^\infty$ functions on $\Omega \times [0,T]$. 
 
 We consider two different motion models: one assuming voxel intensity constancy and one assuming mass preservation during motion. 
 Consider first conservation of the gray-value $c$ under motion, i.e. 
 \begin{equation}\label{eq:conv_gray_value}
 	c\left( x,t\right) - c\left( x+\delta_t v\left(x,t\right), t+\delta_t\right)  =0,
 \end{equation}
 for all $x \in \Omega$, $t \in \left[0,T\right]$, $\delta_t>0$ small. By Taylor expansion we deduce the optical flow constraint 
 \begin{equation}
 	\label{eq:m1}
 	 m_1\left( c,v\right) = \dd{t}c + \nabla c \cdot v   =0.
 \end{equation}
 To expand our setting to applications where the gray-value constancy is not fulfilled (having in mind for example a divergent blood flow), we introduce the mass-conservation assumption, i.e. we assume
 \begin{equation}
 	\int_{\Omega} c(x,t) \mathrm{d}x = K \qquad \forall t \in \left[ 0, T\right] 
 \end{equation}
for a constant $K \in \mathbb{R}$. From the local form 
\begin{equation}
\label{eq:mc_part1}
	\frac{\mathrm{d}}{\mathrm{d}t}\int_{\mathcal{X}} c(x,t) \mathrm{d}x = \int_{\mathcal{X}} \dd{t}c(x,t) \mathrm{d}x
\end{equation}
for an arbitrary subset $\mathcal{X}\subset \Omega$, we can derive the so-called mass conservation constraint by
\begin{equation}
\label{eq:mc_part2}
    \int_{\mathcal{X}} \dd{t}c \mathrm{d}x \overset{!}{=} \int_{\partial \mathcal{X}} cv \cdot \nu \mathrm{d}\mathcal{X} = -\int_\mathcal{X} v\nabla c \mathrm{d}x- \int_\mathcal{X} c \nabla \cdot v \mathrm{d}x = -\int_\mathcal{X} \nabla \cdot \left( cv\right) \mathrm{d}x,
\end{equation}
where $\nu$ denotes the normal vector to $\mathrm{d}\mathcal{X}$. For the first equality, we used that the variation of mass over time must equal the flow through the boundary $\partial \mathcal{X}$. The second equality follows by the divergence theorem. Combining now \cref{eq:mc_part1} and \cref{eq:mc_part2} yields the mass conservation constraint
\begin{equation}
	\label{eq:m2}
	m_2\left( c,v\right) = \dd{t}c + \nabla \cdot \left(cv \right) = 0.
\end{equation}
The joint image reconstruction and motion estimation problem is well-defined as minimizers exist for both motion models, as is shown in the following theorem.
For clarity, note that in the following the characteristic function of a set $X$ is denoted by $\chr{X}$, where
\begin{equation*}
    \chr{X}(x) = \left\lbrace \begin{array}{cc}
	     1, & x \in X \\
	   0, & \mathrm{otherwise}
	 \end{array} \right. ,
\end{equation*}
whereas the indicator function of a set $X$ is denoted by $\ind{X}$, i.e.
 \begin{equation*}
     \ind{X} \left(x\right) = \left \lbrace \begin{array}{cc}
         0, &  x \in X\\
         \infty, & \mathrm{otherwise}
     \end{array} \right. .
 \end{equation*}
\begin{theorem}[Existence of minimizers]
\label{thm:existence}
	Consider the minimization problem \cref{eq:min_prob1}-\cref{eq:nb}: 
	\begin{align*}
		\min_{c,v} J\left( c,v\right) := &\displaystyle\int_{0}^{T} D\left( A\left( c,t \right), u\left( t\right) \right) + \alpha R\left( c(\cdot,t)\right) + \beta S\left( v(\cdot,t)\right) \mathrm{d}t, \\
		&\mathrm{s. t. } \enspace m\left( c, v\right) =0 \quad \text{in } \mathcal{D}'(\Omega \times [0,T])
	\end{align*} 
	 in $n \in \left\lbrace 2,3\right\rbrace $ dimensions, with $c \in L^p\left( 0,T; \mathrm{BV}(\Omega)\right) $, $v \in L^q\left( 0,T; \mathrm{BV}(\Omega)^n\right)$, $u \in L^2\left( 0,T; Y\right) $ over the bounded space-time domain $\Omega \times \left[ 0,T\right] $, $\Omega \subset \mathbb{R}^n$ for a reflexive Banach space $Y$, and $1 < p,q < \infty$, $\hat{p} = \min\left(2,p\right)$. 
	 Let $A: L^{\hat{p}} \left(0,T; L^l\left(\Omega \right)\right) \times \left[ 0,T\right] \rightarrow L^2\left( 0,T; Y\right)$ be generated by bounded operators $A_t: L^l\left( \Omega \right) \rightarrow Y$, $t \in \left[ 0,T\right]$, $l \leq \frac{n}{n-1}$, such that $A_t \left( c(t)\right) = A\left( c,t\right)$ and let $A_t \chr{\Omega} \neq 0\quad \text{for all } t \in \left[ 0,T\right] $. 	 
	Let $R$ and $S$ fulfill \cref{eq:desC} and \cref{eq:desS}, i.e. 
	\begin{flalign*}
	R\left(x\right) &\geq \left| x \right| _{BV}^p \qquad \mathrm{for \enspace any \enspace} x \in \mathrm{BV}\left(\Omega\right)  ,\\
	S\left( y\right) &\geq \left| y \right| _{BV}^q \qquad \mathrm{for \enspace any \enspace} y \in \mathrm{BV}\left(\Omega\right)^n .
	\end{flalign*} 
	Moreover, let $D\left( u_1, u_2 \right) = \frac{1}{2}\left\Vert u_1 - u_2 \right\Vert^2_Y$ be the squared norm distance in $Y$ and $m$ be either $m_1$ or $m_2$ as defined in \cref{eq:m1} and \cref{eq:m2}, respectively. 
	Moreover, assume there exist constants $k_\infty, k_\theta <\infty$ such that $\left\Vert v\right\Vert _{L^\infty\left(0,T,L^{\infty}\left(\Omega\right)^n\right)}\leq k_{\infty}$  and $\left\Vert \nabla\cdot v \right\Vert _{\theta}\leq k_{\theta}$, 	where $\theta=  L^{p^*s}\left( 0,T; L^{l^*k}\left( \Omega\right) \right) $ with $1<s,k<\infty$ and $\frac{1}{p}+\frac{1}{p^*}=1$.\\
	Then there exists a minimizer of the problem in the set of admissible solutions
	\begin{multline}
\left\lbrace \left(c,v\right)\in L^{\hat{p}}\left(0,T;BV\left(\Omega\right)\right)\times L^{q}\left(0,T;BV\left(\Omega\right)^n\right)\middle| \enspace \left\Vert v\right\Vert _{L^\infty\left(0,T;L^{\infty}\left(\Omega\right)^n\right)}\leq k_{\infty},\right.\\
\Big.\left.\left\Vert \nabla\cdot v\right\Vert _{\theta}\leq k_{\theta} , \enspace m\left(c,v\right)=0 \text{ in } \mathcal{D}'(\Omega \times [0,T]) \right. \bigg\}.
\end{multline}
\end{theorem}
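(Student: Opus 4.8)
The plan is to apply the direct method of the calculus of variations. First I would verify that the feasible set is nonempty, so that the infimum is taken over something: the pair $(c,v)=(0,0)$ satisfies both motion constraints trivially (since $m_1(0,0)=m_2(0,0)=0$) and respects the bounds $k_\infty,k_\theta$. Next, $J$ is bounded below because $D$ is a squared norm and $R,S$ are nonnegative by \cref{eq:desC}--\cref{eq:desS}. Hence the infimum is finite, and I would fix a minimizing sequence $(c_k,v_k)$ in the admissible set with $J(c_k,v_k)$ converging to it.

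The second step is to extract uniform bounds. From $J(c_k,v_k)\le C$ together with \cref{eq:desC} and \cref{eq:desS} I immediately control the BV seminorms $|c_k(t)|_{BV}$ and $|v_k(t)|_{BV}$ in the $L^p$- resp.\ $L^q$-sense in time. The seminorm alone does not bound the constant part of a BV function, so I would recover the full norms differently for $c$ and $v$. For $v$ the built-in constraint $\|v_k\|_{L^\infty(0,T;L^\infty(\Omega)^n)}\le k_\infty$ directly supplies the missing control of the spatial mean, giving a uniform bound in $L^q(0,T;BV(\Omega)^n)$. For $c$ I would exploit the hypothesis $A_t\chr{\Omega}\neq 0$: splitting $c_k(t)$ into its spatial mean and oscillation, a Poincar\'e--Wirtinger inequality in BV bounds the oscillation in $L^l$ by the seminorm (here $l\le\frac{n}{n-1}$ enters), while the data term $\tfrac12\|A_t c_k(t)-u(t)\|_Y^2$ together with boundedness of $A_t$ and $A_t\chr{\Omega}\neq 0$ bounds the mean, yielding a uniform bound in $L^{\hat p}(0,T;BV(\Omega))$.

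The third step is compactness. Since bounded sequences in BV are weak-$*$ precompact and $BV(\Omega)\hookrightarrow\hookrightarrow L^l(\Omega)$ compactly, I would pass to subsequences so that $v_k$ and $\nabla\cdot v_k$ converge weakly-$*$/weakly. The delicate point is that I need \emph{strong} convergence of $c_k$ in $L^{\hat p}(0,T;L^l(\Omega))$, both to treat the data term and to pass to the limit in the motion constraint. To obtain it I would use the motion equation itself: rearranging $m(c_k,v_k)=0$ gives $\dd{t}c_k=-\nabla c_k\cdot v_k$ (resp.\ $-\nabla\cdot(c_kv_k)$), and the assumed bounds $\|v_k\|_{L^\infty}\le k_\infty$, $\|\nabla\cdot v_k\|_\theta\le k_\theta$ convert the spatial BV bounds on $c_k$ into a bound on $\dd{t}c_k$ in a negative-order space. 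An Aubin--Lions--Simon argument, combining $BV\hookrightarrow\hookrightarrow L^l$ with this time-derivative bound, then delivers the desired strong convergence. The exponents defining $\theta=L^{p^*s}(0,T;L^{l^*k}(\Omega))$ are precisely what is needed for the H\"older estimates here to close.

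Finally I would establish lower semicontinuity and admissibility of the limit $(c,v)$. The bound constraints $\|v\|_{L^\infty}\le k_\infty$ and $\|\nabla\cdot v\|_\theta\le k_\theta$ cut out closed convex sets and so survive the weak-$*$/weak limits; the data term is continuous along the strongly convergent $c_k$, and $R,S$ are lower semicontinuous by convexity and \cref{eq:desC}--\cref{eq:desS}, so $J(c,v)\le\liminf_k J(c_k,v_k)$. The hard part, and the genuine obstacle, is showing $m(c,v)=0$ in $\mathcal{D}'(\Omega\times[0,T])$. Testing against $\phi\in\mathcal{D}(\Omega\times[0,T])$ and integrating by parts reduces this to the convergence of the bilinear expressions $\int c_k\,(\nabla\cdot v_k)\,\phi$ and $\int c_k\, v_k\cdot\nabla\phi$ to their limits with $(c,v)$. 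Here I would pair the \emph{strong} convergence of $c_k$ against the \emph{weak} convergence of $\nabla\cdot v_k$ in $\theta$ and the weak-$*$ convergence of $v_k$ in $L^\infty$; the H\"older pairing of the conjugate exponents $p^*s$ and $l^*k$ with the $L^p(L^l)$-type bounds on $c_k$ is exactly what legitimizes the strong$\times$weak passage to the product. Passing to the limit then gives $m(c,v)=0$, so $(c,v)$ is admissible and attains the infimum, which proves existence of a minimizer.
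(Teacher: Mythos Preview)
Your proposal is correct and follows essentially the same route as the paper's proof: direct method, Poincar\'e--Wirtinger combined with $A_t\chi_\Omega\neq 0$ to control the mean of $c$, an Aubin--Lions argument (using the motion constraint and the bounds $k_\infty,k_\theta$ to control $\partial_t c$) for strong convergence of $c_k$, and a strong$\times$weak passage to the limit in the bilinear terms of the constraint. One minor caveat: Aubin--Lions only delivers strong convergence of $c_k$ in $L^p(0,T;L^r(\Omega))$ for $r$ \emph{strictly} less than $\tfrac{n}{n-1}$, not in $L^l$ when $l=\tfrac{n}{n-1}$, so the data term is more cleanly handled by weak-$^*$ lower semicontinuity (as the paper does) rather than continuity along the strong limit.
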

\begin{proof}
	The proof is based on \cite{Burger.2018, Dirks.2015}, but we have three main differences in our assumptions. First, our image sequence is in up to $3+t$ dimensions instead of $2+t$; second, our regularizers are of a more general form and third, our linear operator is time-dependent and has a more general range. 
	The proof is stated in Appendix \ref{app:proofExistence} for completeness.
	
\end{proof}

\begin{remark}
    In practice, existence of those constants is no restrictive assumption. Bounding $\left\Vert v\right\Vert _{L^\infty(0,T;L^{\infty}\left(\Omega\right)^n)}$ is basically assuming a finite maximum speed, which is a physically necessary assumption. Moreover, assuming a bound on $\left\Vert \nabla\cdot v\right\Vert _{\theta}$ in an applied sense is bounding the compressibility of the flow, which is again a reasonable assumption. 
\end{remark}

\begin{remark}
    The operator $A_t$ is defined on $L^l\left( \Omega \right)$ for $l \leq \frac{n}{n-1}$. This limitation of topologies compatible with the operator is necessary in order to ensure the continuous embedding of $\mathrm{BV}\left( \Omega \right) $, which is the spatial domain in which we are searching for a minimizer, into the definition space of the operator.
\end{remark}

For computational purposes, either model constraint is incorporated into the variational problem \cref{eq:min_prob1} by adding an additional penalty term $T$ defined by
\begin{equation}
	T\left( c\left( \cdot,t \right), \dd{t}c\left( \cdot,t \right), v\left( \cdot,t \right)\right) = \left\| m_i\left( c,v\right)\left( \cdot,t \right)\right\| ^s_{L^r(\Omega)}, \enspace r, s \geq 1, \enspace i \in \left\lbrace 1,2\right\rbrace, \enspace t \in \left[0,T\right]  
\end{equation}
 such that we arrive at the unconstrained minimization problem 
 \begin{equation}
 	\label{eq:min_prob2} \min_{c,v} \int_{0}^{T} D\left( A\left( c, t\right), u\left( t\right) \right) + \alpha R\left( c(\cdot,t)\right) + \beta S\left( v(\cdot,t)\right) + \gamma T\left( c\left( \cdot,t\right),\dd{t}c\left( \cdot,t \right) ,v\left( \cdot, t\right) \right)  \mathrm{d}t\,,
 \end{equation}
 which can be solved by means of alternating minimization. In \cite[Lemma 3.7]{Burger.2018} it is shown that solutions of the unconstrained minimization problem \cref{eq:min_prob2} converge for $\gamma \rightarrow \infty$ to solutions of the constrained minimization problem \cref{eq:min_prob1}, \cref{eq:nb} in case of the optical flow constraint in $n=2$ dimensions. However, the proof which is based on showing $\Gamma$-convergence of the objective function of the unconstrained problem to the one of the constrained problem, can be carried out analogously for the mass conservation constraint and in $n=3$ dimensions.
 We will apply this approach to the problem of dynamic MPI reconstruction in the following section and thus use the forward operator of MPI as $A_t$. This operator and some of its properties will be derived in the following.

 \begin{remark}\label{rem:regularization_properties}
Analysis of the regularization properties of the unconstrained approach \eqref{eq:min_prob2} might be carried out by exploiting the established theory for Tikhonov-type regularization in Banach spaces in the nonlinear problem setting (see, e.g., \cite{hofmann2007convergence, schuster2012regularization}). 
PDE-constraint parameter identification in the dynamic setting has also been considered as an all-at-once approach to dynamic inverse problems from a more general point of view \cite{Kaltenbacher17,Tram19}.
A more general investigation in this direction is beyond the scope of the present work but an extended discussion of this relation can be found in Appendix~\ref{app:extended_discussion_rem}.
\end{remark}
 
 \subsection{Joint reconstruction for MPI}
 \label{subsec:jointMPIreco}
%
We aim for an application to the imaging modality magnetic particle imaging and thus need an mathematical formulation of the forward operator.
The functionality is based on the non-linear response of magnetic nanoparticles to an applied magnetic field. Modeling the nanoparticles' magnetization behavior sufficiently accurate for imaging tasks when using Lissajous-type excitations is still an open research challenge, but a simplified model, the equilibrium model (see the survey paper \cite{Kluth2018a}), has been extensively used to analyze the system behavior. In line with this, we exploit the equilibrium model to verify necessary requirements of the proposed approach. In order to derive the forward operator of MPI as well as some properties, we start by giving more detailed insights into the basic principles of MPI, see also \cite{knopp2012MPIbook}. 

In this work, we consider an MPI scanner consisting of a time independent magnetic field (selection field) $H_S:\R^3 \to \R^3$, which has a field-free point (FFP) in the center of the imaging device, see \cref{fig:1a}. Other geometries like a field-free line can be considered but are omitted for simplicity. This field is superimposed by time dependent magnetic fields (drive fields) $H_D:[0,T] \to \R^3$, which shift the FFP in space through the field of view $\Omega \subset \R^3$ (FOV), see \Cref{fig:1b}. 
By superposition, we describe the effective magnetic field $H:\R^3 \times [0,T] \to \R^3 $ built by an MPI scanner by  
\begin{equation*}
	H\left( x,t\right) = H_D\left( t\right) + H_S\left( x\right)  .
\end{equation*}
We assume that the selection field is a linear gradient field (in good approximation), meaning it exists a matrix $G \in \mathbb{R}^{3 \times 3}$ such that $
	H_S\left( x\right) = Gx.
$
As we consider an FFP MPI device, the matrix $G$ needs to be a full-rank matrix. Other geometries like a field-free-line scanner \cite{weizenecker2008magnetic} are not considered in this work. The trajectory $x_s:[0,T] \to \R^3 $ of the FFP can be described by 
\begin{align*}
	0 	&=  H_D\left( t\right) + H_S\left( x\right)  
	=  H_D\left( t\right) + G x_s\left( t\right) \\
	\Leftrightarrow x_s\left( t\right) &= -G^{-1}H_D\left( t\right) .
\end{align*}
\begin{figure}[tbhp]
    \centering
    \subfloat[]{\label{fig:1a}
        \includegraphics[width=0.3\textwidth]{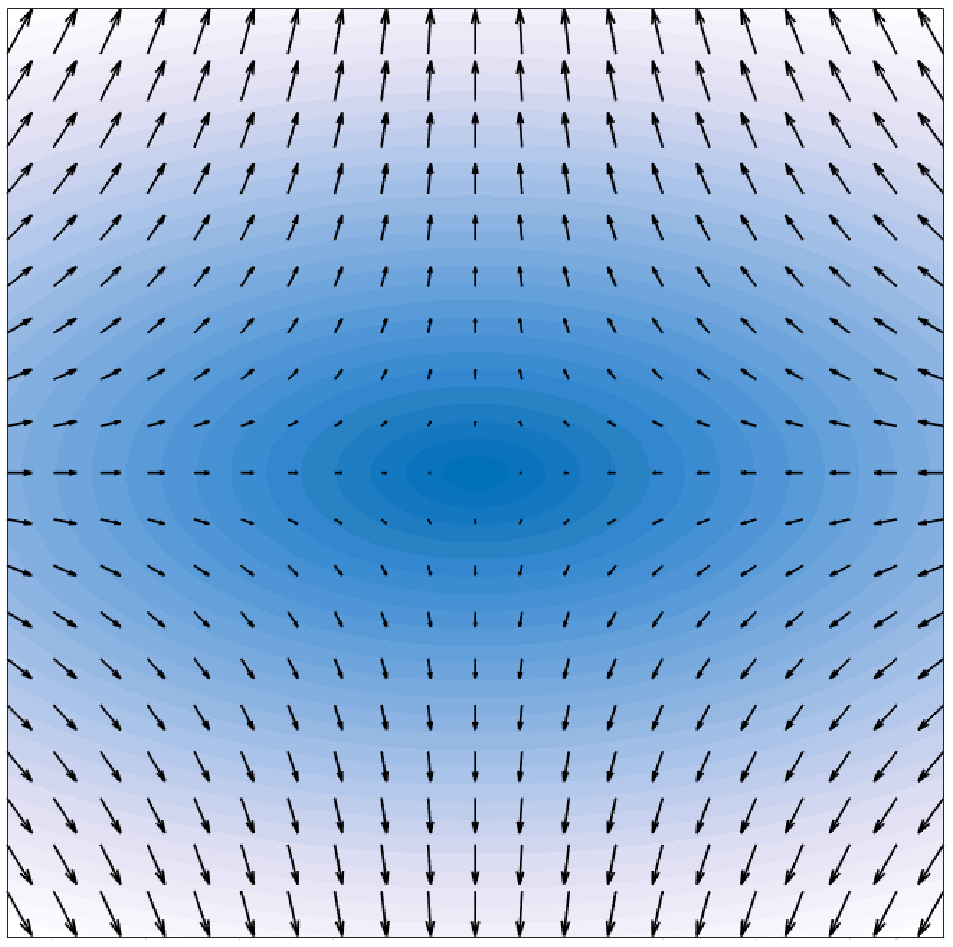}}
    \subfloat[]{\label{fig:1b}
      \includegraphics[width=0.3\textwidth]{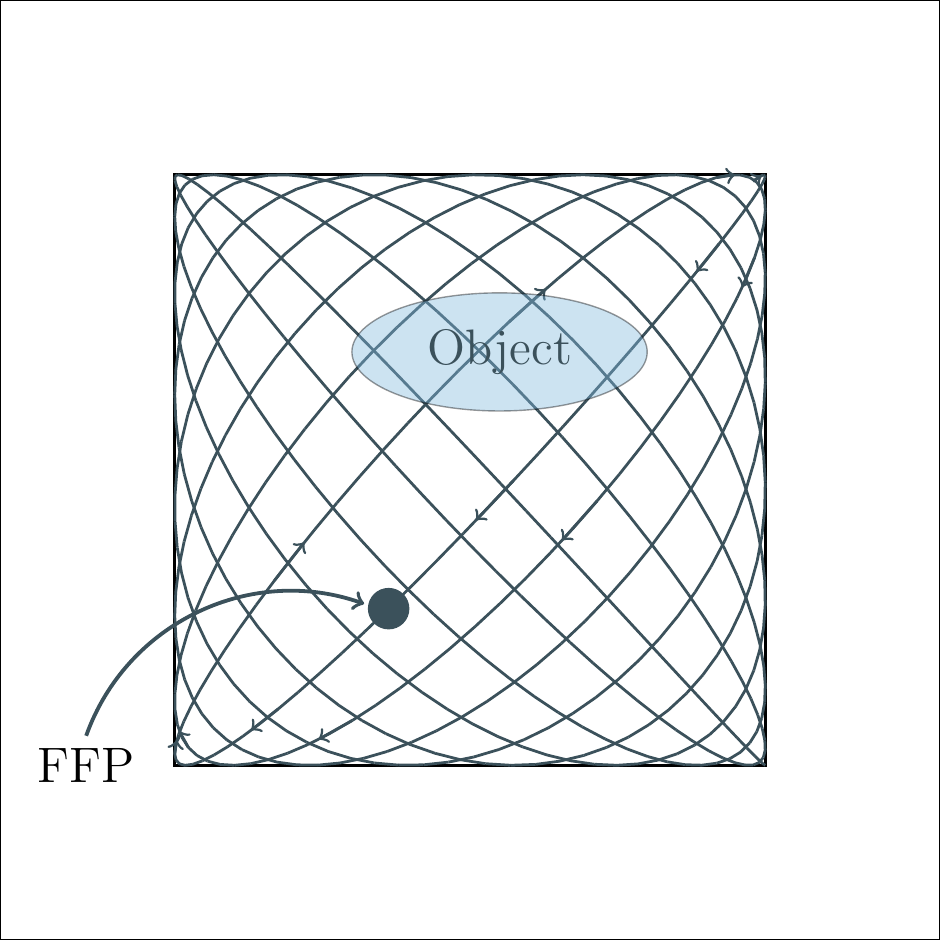}}
    \caption{The time independent selection field $H_S$ in an MPI scanner has a field-free point (FFP) in the middle and increasing field strength towards the boundaries (\cref{fig:1a}). The FFP is shifted through the region of interest by time-dependent drive-fields. It typically follows a Lissajous-type curve (\cref{fig:1b}).}
    \label{fig:selField_lissajous}
\end{figure}
Using 
	$H\left(x,t\right)  = G\left( x- x_s\left( t\right) \right) \,$,
we can compute the time derivative of the magnetic field as 
$
	\dot{H}\left( x,t\right) = -G\dot{x}_s\left( t\right) .
$

Typically, by the use of sinusoidal excitation functions, the movement of the FFP follows Lissajous curves. One cycle of the FFP along that curve is performed within the repetition time $T_R$. We note that in this paper $T_R \ll T$, i.e., we consider that the object is imaged in multiple cycles of the Lissajous curve. The change of magnetization caused by the FFP movement, induces a voltage signal by law of induction \cite{knopp2012MPIbook}. This signal is measured by the receive coils of the scanner and then used for image reconstruction. 

The static MPI forward operator $A : L^{\hat{p}}\left(0,T; L^l\left(\Omega\right)\right) \times \left[ 0,T\right]  \rightarrow \left( L^2\left(0,T;Y \right)\right) ^L$ for $t \in \left[ 0,T\right] $ for $L\in \mathbb{N}$ receive coil units is defined by 
\begin{equation}
\begin{aligned}
	A_i\left( c_s,t\right) = &- \int_\Omega c_s\left( x\right) \mu_0 m_0 R_i^T \left[ \left( 
		\frac{\mathcal{L}'_{\beta} \left( \left\| H\left( x,t\right) \right\|_2\right) }
		{\left\| H\left( x,t\right) \right\|_2^2 } - 
		\frac{\mathcal{L}_{\beta} \left( \left\| H\left( x,t\right) \right\|_2\right)}
		{\left\| H\left( x,t\right) \right\|_2^3 } \right) 
		H\left( x,t\right) H\left( x,t\right)^T \right.\\
		&+ \left. 
		\frac{\mathcal{L}_{\beta} \left( \left\| H\left( x,t\right) \right\|_2\right)}
		{\left\| H\left( x,t\right) \right\|_2  }I_3 \right]\dot{H}\left( x,t\right) \dx{x}
\end{aligned} 
\label{eq:static_forward_model}
\end{equation}
for $i=1,...,L$, where $\Omega \subset \mathbb{R}^{3}$ denotes the FOV, $c_s: \Omega \rightarrow \mathbb{R}^{+}$ denotes the static particle concentration of tracer material, $\mu_0$ describes the permeability constant, $m_0$ is the absolute of a particle's magnetic moment, $R \in \mathbb{R}^{3\times L}$ denotes the spatially homogeneous receive coil sensitivities ($R_i$ being the $i$-th column of $R$) and $\mathcal{L}_\beta$ describes the dilated Langevin function, which is part of the used equilibrium model assumed as the particle magnetization model. By $I_3$ we denote the $\mathbb{R}^{3x3}$ identity matrix.
The (dilated) Langevin function $\mathcal{L}_\beta : \mathbb{R} \rightarrow \mathbb{R}$ is defined by 
\begin{equation*}
    \mathcal{L}_\beta \left(z\right) = \left \lbrace \begin{array}{ll}
        \coth{\left(\beta z \right)} - \frac{1}{\beta z}, & \mathrm{for} \quad z\neq 0 \\
        0, & \mathrm{else},
    \end{array} \right. 
\end{equation*}
for a given positive parameter $\beta$.

This forward model for static MPI in 3 spatial dimensions can be similarly expressed by
\begin{equation}
	u\left( t\right) = \int_{\Omega} s(x,t) c_s(x) \,\mathrm{d}x
\end{equation}
in the time domain, where $u: \left[0,T \right] \rightarrow \mathbb{R}^L$ denotes the induced signal.  
The system function $s: \Omega \times \left[0,T \right] \rightarrow \mathbb{R}^L$ is described by
\begin{equation}\label{eq:system_function_per_coil}
	s_i\left( x,t\right) = -\mu_0 R_i^T\frac{\partial \bar{m}(x,t)}{\partial t}
\end{equation}
 for $i=1,...,L$, and $\bar{m}: \Omega \times \left[ 0,T\right] \rightarrow \mathbb{R}^3$ denotes the mean magnetic moment of the magnetic nanoparticles, i.e. 
 \begin{equation*}
 	\bar{m}\left( x,t\right) = m_0\mathcal{L}_{\beta}\left( \left\| H\left( x,t\right) \right\|_2 \right) \frac{H\left( x,t\right) 	}{ \left\| H\left( x,t\right) \right\|_2}.
 \end{equation*}

We introduce this simplified notation in order to shorten the derivation of the dynamic MPI forward model in the following. 
 
\begin{remark}
When measuring more than one drive-field cycle, the acquired signal is $T_R$-periodic in the static case. In case of low particle concentrations, the measurements are often averaged over some drive-field cycles to get a more robust signal with a higher signal-to-noise ratio (SNR) to reconstruct with, as the method of MPI defines a severely ill-posed problem. Note that we need measurements from at least one complete cycle of length $T_R$ to reconstruct data on the full FOV.
\end{remark}

We continue with certain properties of the forward operator, where we consider a specific choice of $A_t$, respectively $A_{i,t}$ being discussed in more detail in \cref{subsec:dynamicMPI} on the dynamic MPI problem. By $I_{t_s}\subset \left[0,T\right]$ we denote a compact interval with nonzero measure that includes the time point $t_s$. We will specify the interval more precisely when describing the different time scales in MPI.

\begin{lemma}[Compactness]
\label{lem:compactness}
	Let $I_{t_s} \subset \left[0,T\right]$ be compact and with nonzero measure for arbitrary time point $t_s$. Further let $\Omega \subset \mathbb{R}^3$ be simply connected and bounded. Moreover, let  $\left\| R_i\right\|_2 \neq 0$ for $i=1,...,L$, $x_s \in \mathcal{C}^1\left( I_{t_s}\right)^3 $, and $\dot{x}_s \in \mathcal{C}_b\left( I_{t_s}\right)^3$, i.e. $\dot{x}_s$ continuous and bounded. Assume that $G\in\R^{n\times n}$ is regular 
	and $c \in L^l\left(\Omega  \right) $ for arbitrary $l>1$.
	
	Then the operators $A_{i,t_s}: L^l\left( \Omega\right)  \rightarrow L^2\left( I_{t_s}\right) $, $i=1,\hdots,L$, with $c \mapsto \int_\Omega c(x) s_i(x,t) \dx{x}, \  t \in I_{t_s},$  are compact.  
\end{lemma}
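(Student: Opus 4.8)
The plan is to exploit that $A_{i,t_s}$ is a Fredholm integral operator with kernel $s_i$, and to reduce everything to the regularity of that kernel. Concretely, I would first show that $s_i$ is continuous---hence bounded and uniformly continuous---on the compact set $\overline{\Omega}\times I_{t_s}$, and then obtain compactness from the Arzel\`a--Ascoli theorem: the image of the closed unit ball of $L^l(\Omega)$ under $A_{i,t_s}$ is a uniformly bounded and equicontinuous family in $C(I_{t_s})$, hence relatively compact there, and the continuous embedding $C(I_{t_s})\hookrightarrow L^2(I_{t_s})$ (valid because $I_{t_s}$ is compact) transfers relative compactness to $L^2(I_{t_s})$. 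So the whole argument hinges on the kernel being genuinely continuous.

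The only delicate point is the continuity of $s_i(x,t) = -\mu_0 R_i^T \partial_t \bar m(x,t)$ at the field-free point $x=x_s(t)$, where $H(x,t)=G(x-x_s(t))$ vanishes and the factors $1/\|H\|^2$ and $1/\|H\|^3$ entering $\partial_t\bar m$ are individually singular. I would treat this as a removable singularity via the expansion $\mathcal{L}_\beta(z)=\tfrac{\beta}{3}z-\tfrac{\beta^3}{45}z^3+O(z^5)$ near $z=0$, which gives $\mathcal{L}_\beta(r)/r\to\beta/3$ and $\mathcal{L}'_\beta(r)\to\beta/3$ as $r:=\|H\|\to 0$. Writing $\partial_t\bar m = m_0\, J(x,t)\,\dot H(x,t)$ with the Jacobian
\[
J=\left(\frac{\mathcal{L}'_\beta(r)}{r^2}-\frac{\mathcal{L}_\beta(r)}{r^3}\right)H H^T+\frac{\mathcal{L}_\beta(r)}{r}I_3,
\]
I would bound the entries of the first summand by $|\mathcal{L}'_\beta(r)-\mathcal{L}_\beta(r)/r|$ (using $|H_jH_k|\le r^2$); the expansion shows this difference is $O(r^2)$ and tends to $0$, while the second summand tends to $\tfrac{\beta}{3}I_3$, so $J$ extends continuously to $H=0$ with value $\tfrac{\beta}{3}I_3$. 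Since $x_s\in\mathcal C^1(I_{t_s})^3$ and $\dot x_s\in\mathcal C_b(I_{t_s})^3$, the field $H$ and $\dot H=-G\dot x_s$ are jointly continuous on $\overline{\Omega}\times I_{t_s}$, and composing with the continuous $J$ yields continuity of $s_i$; compactness of the domain then gives a uniform bound $|s_i|\le M$ and uniform continuity.

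With the kernel controlled, the remaining estimates are routine. By H\"older's inequality with $1/l+1/l^*=1$,
\[
|A_{i,t_s}c(t)|\le\int_\Omega|c(x)|\,|s_i(x,t)|\,\mathrm{d}x\le\|c\|_{L^l(\Omega)}\,\|s_i(\cdot,t)\|_{L^{l^*}(\Omega)}\le M\,|\Omega|^{1/l^*}\|c\|_{L^l(\Omega)},
\]
so $A_{i,t_s}c\in C(I_{t_s})$ with sup-norm controlled by $\|c\|_{L^l}$, giving uniform boundedness on the unit ball; here the boundedness of $\Omega$ supplies the finite measure. Equicontinuity follows from the same H\"older bound applied to kernel increments,
\[
|A_{i,t_s}c(t)-A_{i,t_s}c(t')|\le\|c\|_{L^l(\Omega)}\,\|s_i(\cdot,t)-s_i(\cdot,t')\|_{L^{l^*}(\Omega)},
\]
together with the uniform continuity of $s_i$ in $t$, which makes the right-hand side small uniformly over $\|c\|_{L^l}\le 1$ once $|t-t'|$ is small. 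Arzel\`a--Ascoli then yields relative compactness of $\{A_{i,t_s}c:\|c\|_{L^l}\le 1\}$ in $C(I_{t_s})$, and passing through the embedding into $L^2(I_{t_s})$ finishes the proof.

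I expect the main obstacle to be precisely the first, analytic step: verifying that the apparent singularities of $J$ at the field-free point truly cancel, so that $s_i$ is \emph{continuous} (not merely bounded away from $x=x_s(t)$) there, since a naive estimate only controls the kernel off the FFP. The hypotheses $x_s\in\mathcal C^1$, $\dot x_s\in\mathcal C_b$, and $G$ regular (with $\|H\|\le\|G\|\,\|x-x_s(t)\|$ uniformly bounded on the compact set) are exactly what is needed to lift the pointwise cancellation to joint continuity. The assumption $\|R_i\|_2\neq 0$ is not used for compactness; it only ensures the operator is nontrivial.
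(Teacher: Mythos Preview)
Your argument is correct and takes a genuinely different route from the paper's proof. The paper does not analyze the removable singularity at the field-free point directly; instead it invokes an external regularity result (\cite[Theorem~4.1]{Kluth2018b}) to obtain $k\in L^2(I_{t_s};L^\infty(\Omega))$ for the kernel $k(x,t)=-\mu_0 m_0 R_i^T F(x,t)G\dot x_s(t)$, and then concludes compactness by the standard Hilbert--Schmidt criterion for integral operators from $L^l(\Omega)$ to $L^2(I_{t_s})$ via the finiteness of $\|k\|_{L^2(I_{t_s};L^{l^*}(\Omega))}$. Your approach is more self-contained: by making the Taylor expansion of $\mathcal L_\beta$ explicit you upgrade the kernel regularity from $L^2$-in-time/$L^\infty$-in-space to genuine joint continuity on $\overline\Omega\times I_{t_s}$, which then lets you factor through $C(I_{t_s})$ via Arzel\`a--Ascoli rather than through a Hilbert--Schmidt norm. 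The paper's route is shorter once the cited lemma is granted and sits in a framework that would still work if the kernel were merely square-integrable in time; your route is elementary and gives a bit more (compactness into $C(I_{t_s})$, not just $L^2$), at the price of the explicit singularity cancellation you carried out. Your observation that $\|R_i\|_2\neq 0$ plays no role in compactness is accurate; the paper does not use it here either.
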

\begin{proof}
Let the operator inside the square brackets in \cref{eq:static_forward_model} be denoted by $F\left(x,t\right)$ to reduce the notational complexity.
The MPI forward operator for the $i-th$ receive coil unit is defined by a linear Fredholm integral equation of first kind with the kernel $k: \Omega \times I_{t_s} \rightarrow \mathbb{R}$ given by 
	\begin{equation*}
		k\left( x,t\right) = -\mu_0 m_0 R_i^T F\left( x,t\right) G\dot{x}_s\left( t\right) \quad \mathrm{for} \enspace t \in I_{t_s}.
	\end{equation*}
By \cite[Theorem 4.1.]{Kluth2018b}, we deduce that $k \in H^{0}\left( I_{t_s}; L^{\infty}\left( \Omega \right) \right)$. The assumptions required for the theorem are fulfilled in our setting, as we shortly outline in the following. 
By $x_s \in \mathcal{C}^1\left( I_{t_s}\right) $, we know that the drive-field $H_D\left(t \right) \in \mathcal{C}^1\left( I_{t_s}\right)$ and therefore $H_D\left(t \right) \in H^1\left( I_{t_s}\right)$. Moreover, the selection field $H_S\left( x\right) = Gx$ for a full rank matrix $G$ fulfills $H_S \in L^{\infty}\left( \Omega \right)^n $ and the receive coil sensitivity $R$ also fulfills $R \in L^{\infty}\left( \Omega \right)^n$. Thus \cite[Equation (4.3)]{Kluth2018b} yields $k \in H^{0}\left( I_{t_s}; L^{\infty}\left( \Omega \right) \right) = L^2\left( I_{t_s}; L^{\infty}\left( \Omega \right) \right)$, i.e., $k \in L^2\left( I_{t_s}; L^{l^*}\left( \Omega \right) \right)$ for any $1 \leq l^\ast \leq \infty$. 

We now consider $k$ as a Hilbert-Schmidt integral operator with 
\begin{equation}
     \left\| k \right\| := \left( \int_{I_{t_s}}\left( \int_\Omega \left| k\left( x,t\right) \right| ^{l^*} \dx{x} \right) ^{\frac{2}{l^*}} \dx{t} \right) ^{\frac{1}{2}} = \|k\|_{L^2\left(I_{t_s}; L^{l^*}\left( \Omega\right) \right)} < \infty, 
\end{equation}
with $\frac{1}{l} + \frac{1}{l^*} = 1$, $1<l<\infty$. Then by standard results from functional analysis (cf. \cite[Section 5.12]{alt2016linear}) it follows that the operator 
\begin{equation}
   \left( A_{t_s} c\right)\left(t \right) = \int_\Omega k\left(x,t\right) c\left(x\right) \dx{x} 
\end{equation}
defines a compact operator from $L^l\left( \Omega \right)$ to $L^2\left(I_{t_s}\right)$.
\end{proof}

\begin{remark}
    By the compactness of the MPI forward operator, it is clear that the image reconstruction task is ill-posed. The degree of ill-posedness in terms of decay of the singular values was analyzed in \cite{Kluth2018b}. In a standard setting, using the equilibrium magnetization model, trigonometric FFP trajectories and a linear selection field, the singular values decay exponentially yielding a severely ill-posed problem.
\end{remark}

\begin{remark}
    Although \cref{lem:compactness} is formulated for $n=3$ dimensions, it can be transferred to lower dimensional MPI. This can be achieved by assuming the concentration to be Dirac $\delta$-distributed with respect to the orthogonal complement of the lower dimensional affine subspace of $\mathbb{R}^3$. A detailed description and derivation can be found in \cite[Section 2.2]{Kluth2018b}. 
\end{remark}

Our goal is now to prove the regularity assumption on the forward operator needed for \cref{thm:existence}, i.e. $A_{t_s} \chr{\Omega} \neq 0$ for arbitrary time points $t_s$. In order to do so, we carry out some analysis on a shifted domain. For fixed $t$, we use the transformation 
\begin{equation*}
	\Phi_t\left( \xi\right) := G^{-1}\xi + x_s\left( t\right). 
\end{equation*}
Applying the change of variables formula to the forward operator leads to
\begin{align*}
	\left( A_{t_s} \chr{\Omega}\right) \left( t\right)& = - \int_{\Omega} \mu_0 m_0 R^T F\left( x,t\right) G\dot{x}_s\left( t\right) \dx{x}\\
	&=- \int_{\Phi_t^{-1}\left( \Omega\right) } \mu_0 m_0 R^T F\left( \Phi_t\left( \xi\right),t\right)\left| \mathrm{det} \nabla \Phi_t\left( \xi\right)\right|  G\dot{x}_s\left( t\right) \dx{\xi}\\
	&=- \int_{\Omega_t } \mu_0 m_0 R^T \tilde{F}\left( \xi\right) \left|\mathrm{det} G^{-1}\right|  G\dot{x}_s\left( t\right) \dx{\xi}
\end{align*}
with $\Omega_t:= \Phi_t^{-1}\left( \Omega \right) = G\left(\Omega -  x_s\left( t\right)\right) $, $\nabla \Phi_t\left(\xi\right) = G^{-1}$ and 
\begin{equation}
\label{eq:def_ftilde}
	\tilde{F}\left( \xi\right) := \left( 
	\frac{\mathcal{L}'_{\beta} \left( \left\| \xi \right\|_2\right) }
	{\left\| \xi \right\|_2^2 } - 
	\frac{\mathcal{L}_{\beta} \left( \left\| \xi \right\|_2\right)}
	{\left\| \xi \right\|_2^3 } \right) 
	\xi \xi^T + 
	\frac{\mathcal{L}_{\beta} \left( \left\| \xi \right\|_2\right)}
	{\left\| \xi \right\|_2  }I_3.
\end{equation}
For this matrix we can verify the following auxiliary lemma.
\begin{lemma}[Positive definiteness]
	Let $\xi \in \mathbb{R}^3\backslash\left\lbrace 0\right\rbrace $ be arbitrary. Then $\tilde{F}\left( \xi\right) \in \R^{3\times 3} $ is positive definite. 
\end{lemma}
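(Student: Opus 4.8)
The plan is to exploit the rank-one-plus-scalar structure of $\tilde{F}(\xi)$ and reduce positive definiteness to two scalar inequalities for the Langevin function. Writing $r = \left\| \xi\right\|_2 > 0$ (recall $\xi \neq 0$), the matrix has the form $\tilde{F}(\xi) = a\,\xi\xi^T + b\,I_3$ with $a = \mathcal{L}'_{\beta}(r)/r^2 - \mathcal{L}_{\beta}(r)/r^3$ and $b = \mathcal{L}_{\beta}(r)/r$. Since $\xi\xi^T$ has eigenvalue $r^2$ in the direction $\xi$ and eigenvalue $0$ on the orthogonal complement $\xi^\perp$ (with multiplicity two), $\tilde{F}(\xi)$ is simultaneously diagonalized and its eigenvalues are $\lambda_\parallel = a r^2 + b$ along $\xi$ and $\lambda_\perp = b$ on $\xi^\perp$. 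A short computation collapses $\lambda_\parallel$: the two $\mathcal{L}_{\beta}(r)/r$ contributions cancel, leaving $\lambda_\parallel = \mathcal{L}'_{\beta}(r)$, while $\lambda_\perp = \mathcal{L}_{\beta}(r)/r$. Since $\tilde{F}(\xi)$ is symmetric (a sum of the symmetric matrix $\xi\xi^T$ and a multiple of $I_3$), positive definiteness is therefore equivalent to $\mathcal{L}'_{\beta}(r) > 0$ and $\mathcal{L}_{\beta}(r) > 0$ for every $r > 0$.

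Second, I would verify these two scalar inequalities directly from the definition $\mathcal{L}_{\beta}(z) = \coth(\beta z) - 1/(\beta z)$. Setting $u = \beta r > 0$, positivity of $\lambda_\perp$ amounts to $\coth(u) > 1/u$, equivalently $u\cosh(u) > \sinh(u)$; this is immediate since $u\cosh(u) - \sinh(u)$ vanishes at $u=0$ and has derivative $u\sinh(u) > 0$ for $u > 0$ (or, alternatively, from the expansion $\coth u = 1/u + u/3 - \cdots$). For the derivative one computes, using $\frac{d}{dz}\coth(\beta z) = -\beta/\sinh^2(\beta z)$, that $\mathcal{L}'_{\beta}(z) = 1/(\beta z^2) - \beta/\sinh^2(\beta z)$, so $\lambda_\parallel > 0$ is equivalent to $\sinh^2(u) > u^2$, i.e. $\sinh(u) > u$ for $u > 0$, which follows from $\sinh u = u + u^3/6 + \cdots$.

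Combining the two, both eigenvalues of $\tilde{F}(\xi)$ are strictly positive for every $\xi \neq 0$, and by symmetry this yields positive definiteness. I do not anticipate a genuine obstacle: the only mildly technical point is the algebraic simplification identifying $\lambda_\parallel$ with $\mathcal{L}'_{\beta}(r)$, together with the recognition that the required positivity of the Langevin function and of its derivative reduce to the elementary estimates $\sinh u > u$ and $\coth u > 1/u$, both of which hold for all $u > 0$.
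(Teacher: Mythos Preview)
Your proof is correct and follows essentially the same approach as the paper: both decompose with respect to the direction of $\xi$ and reduce positive definiteness to the two scalar inequalities $\mathcal{L}'_\beta(r)>0$ and $\mathcal{L}_\beta(r)/r>0$. The paper does this by expanding the quadratic form $x^T\tilde F(\xi)x$ and splitting $x$ into its $\xi$-parallel and $\xi$-orthogonal parts (via Cauchy--Schwarz), whereas you arrive at the same two quantities as the eigenvalues of the rank-one-plus-scalar matrix; you also supply explicit proofs of the Langevin inequalities, which the paper merely asserts.
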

\begin{proof}
	We consider $\tilde{F}$ as defined in \cref{eq:def_ftilde}.
	Let $\xi \in \mathbb{R}^3\backslash\left\lbrace 0\right\rbrace $ and $x  \in \mathbb{R}^3$ arbitrary. 
    Consider first 
	\begin{align*}
		x^T \left( 
		\frac{\mathcal{L}'_{\beta} \left( \left\| \xi \right\|_2\right) }
		{\left\| \xi \right\|_2^2 } \right.& \left.- 
		\frac{\mathcal{L}_{\beta} \left( \left\| \xi \right\|_2\right)}
		{\left\| \xi \right\|_2^3 } \right) 
		\xi \xi^T x 
		 = \left(\mathcal{L}'_{\beta} \left( \left\| \xi \right\|_2\right) - 
		\frac{\mathcal{L}_{\beta} \left( \left\| \xi \right\|_2\right)}
		{\left\| \xi \right\|_2 }\right) \frac{\left(x^T \xi\right)^2}{\left\| \xi \right\|_2^2} . 
	\end{align*}
	Moreover,
	\begin{align*}
		x^T \frac{\mathcal{L}_{\beta} \left( \left\| \xi \right\|_2\right)}
		{\left\| \xi \right\|_2  }I_3 x 
		&= \frac{\mathcal{L}_{\beta} \left( \left\| \xi \right\|_2\right)}{\left\| \xi \right\|_2  } x^T x,
	\end{align*} 
    yields
	\begin{align*}
		x^T \tilde{F}\left( \xi\right) x & = \left(\mathcal{L}'_{\beta} \left( \left\| \xi \right\|_2\right) - 
		\frac{\mathcal{L}_{\beta} \left( \left\| \xi \right\|_2\right)}
		{\left\| \xi \right\|_2 }\right) \frac{\left(x^T\xi\right)^2}{\left\| \xi \right\|_2^2} + \frac{\mathcal{L}_{\beta} \left( \left\| \xi \right\|_2\right)}{\left\| \xi \right\|_2  }\left \Vert x \right \Vert_2^2 \\
		&= \underbrace{\mathcal{L}'_{\beta} \left( \left\| \xi \right\|_2\right)}_{>0} \underbrace{\frac{\left(x^T\xi\right)^2}{\left\| \xi \right\|_2^2}}_{\geq 0}  + \underbrace{\frac{\mathcal{L}_{\beta} \left( \left\| \xi \right\|_2\right)}{\left\| \xi \right\|_2 }}_{>0}\underbrace{
		\left(\left \Vert x \right \Vert_2^2 - \frac{\left(x^T\xi\right)^2}{\left\| \xi \right\|_2^2}\right)}_{\geq 0, \enspace \mathrm{as} \enspace \left(x^T\xi\right)^2 \leq \left\Vert \xi\right\Vert_2^2\left\Vert x\right\Vert_2^2}, 
	\end{align*}
	where we used the positivity of the Langevin function everywhere but in $0$, the positivity of the derivative of the Langevin function and the Cauchy-Schwarz inequality. 
	We observe the following: If $\frac{\left(x^T\xi\right)^2}{\left\| \xi \right\|_2^2} = 0$, it follows that $\left(x^T\xi\right)^2 = 0$ and thus $\left(\left \Vert x \right \Vert_2^2 - \frac{\left(x^T\xi\right)^2}{\left\| \xi \right\|_2^2}\right) \neq 0$ if $x\neq 0$. \\
    If $\left(\left \Vert x \right \Vert_2^2 - \frac{\left(x^T\xi\right)^2}{\left\| \xi \right\|_2^2}\right) = 0$, then $\frac{\left(x^T\xi\right)^2}{\left\| \xi \right\|_2^2} \neq 0$ for $x\neq 0$. 
	We have thus shown 
	\begin{equation*}
		x^T \tilde{F}\left( \xi\right) x >0 \qquad \mathrm{for} \enspace x \neq 0.
	\end{equation*}
 
\end{proof}

This allows us now to verify the desired regularity property of the forward operator.
\begin{theorem}[Regularity of the MPI forward operator]
	Let $I_{t_s} \subset \left[0,T\right]$ be compact and with nonzero measure for arbitrary time point $t_s$, $\Omega \subset \mathbb{R}^3$ be simply connected and bounded. Assume that the coil sensitivities $R_i \in \mathbb{R}^3, \enspace i = 1,...,3$, fulfill
	\begin{equation*}
		\mathrm{span}\left\lbrace R_i, \enspace i=1,...,3\right\rbrace = \mathbb{R}^3.
	\end{equation*}
	Moreover, let $x_s \in \mathcal{C}^1\left( I_{t_s}\right)$ and assume there exists an inner point $t^* \in I_{t_s}$ such that $\left\| \dot{x}_s\left( t^*\right)\right\|_{2} \neq 0$. 
	Let the operator $A_{t_s}: L^l\left( \Omega\right)  \rightarrow L^2\left( I_{t_s}\right)^3 $, be given by $c \mapsto \left( \int_\Omega c(x) s_i(x,t) \dx{x}\right)_{i=1,\hdots,3}, \  t \in I_{t_s},$ according to \eqref{eq:system_function_per_coil}.
	\\
	Then it holds that $$\left\| A_{t_s} \chr{\Omega}\right\|_{L^2(I_{t_s})^3} >0$$ 
	and thus $A_{t_s} \chr{\Omega} \neq 0$.
\end{theorem}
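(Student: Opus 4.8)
The plan is to build directly on the change-of-variables representation already derived above, namely
\[
  (A_{t_s}\chr{\Omega})(t) = -\mu_0 m_0 \left|\det G^{-1}\right| R^T M_t\, G\dot{x}_s(t), \qquad M_t := \int_{\Omega_t}\tilde{F}(\xi)\,\dx{\xi},
\]
where $R^T\in\R^{3\times 3}$ is the matrix with rows $R_i^T$ and $\Omega_t = G(\Omega - x_s(t))$. Since the quantity of interest is a vector-valued function of $t$, to obtain $\left\| A_{t_s}\chr{\Omega}\right\|_{L^2(I_{t_s})^3}>0$ it suffices to exhibit one time point at which this value is a nonzero vector and then to invoke continuity in $t$ to conclude non-vanishing on a set of positive measure.

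First I would evaluate at the inner point $t^*$ and show $(A_{t_s}\chr{\Omega})(t^*)\neq 0$ by a chain of invertibility arguments. The scalar prefactor $\mu_0 m_0\left|\det G^{-1}\right|$ is nonzero. The vector $G\dot{x}_s(t^*)$ is nonzero because $G$ is regular and $\left\| \dot{x}_s(t^*)\right\|_2\neq 0$ by assumption. The matrix $R^T$ is invertible since $\mathrm{span}\{R_i\}=\R^3$. The remaining and decisive factor is $M_{t^*}$: by the preceding positive-definiteness lemma, $\tilde{F}(\xi)$ is positive definite for every $\xi\neq 0$, so for any $x\neq 0$ we have $x^T M_{t^*} x = \int_{\Omega_{t^*}} x^T\tilde{F}(\xi)x\,\dx{\xi}>0$, the single point $\xi=0$ being of measure zero and $\Omega_{t^*}$ having positive Lebesgue measure as the image of $\Omega$ under the affine bijection $\Phi_{t^*}^{-1}$. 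Hence $M_{t^*}$ is positive definite, in particular invertible, and the composition $R^T M_{t^*} G\dot{x}_s(t^*)$ is a nonzero vector.

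Next I would establish continuity of $t\mapsto (A_{t_s}\chr{\Omega})(t)$ on $I_{t_s}$. Working in the original form $(A_{i,t_s}\chr{\Omega})(t) = -\int_\Omega \mu_0 m_0 R_i^T F(x,t)G\dot{x}_s(t)\,\dx{x}$ with $F(x,t) = \tilde{F}(G(x-x_s(t)))$ is convenient here, since the domain of integration is then the fixed set $\Omega$ and only the integrand carries the $t$-dependence. Continuity follows from dominated convergence: as $t_n\to t$ the integrand converges pointwise for a.e.\ $x$ (all $x\neq x_s(t)$), because $x_s\in\mathcal{C}^1$ and $\dot{x}_s$ is continuous, while the integrand is dominated by a constant integrable over the finite-measure set $\Omega$, since $\tilde{F}$ is uniformly bounded on $\R^3\setminus\{0\}$ (its apparent singularity at the field-free point is removable, as $\mathcal{L}_\beta(z)/z$ stays bounded near $0$) and $\dot{x}_s$ is bounded on the compact interval $I_{t_s}$.

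Finally I would combine the two facts: because $t^*$ is an inner point of $I_{t_s}$ and $(A_{t_s}\chr{\Omega})$ is continuous with $(A_{t_s}\chr{\Omega})(t^*)\neq 0$, there is a relatively open subinterval of $I_{t_s}$ around $t^*$ on which the value stays bounded away from zero; integrating the norm over this set of positive measure yields $\left\| A_{t_s}\chr{\Omega}\right\|_{L^2(I_{t_s})^3}>0$, which is the claim. I expect the main obstacle to be the treatment of the field-free point in the case $0\in\Omega_{t^*}$: one must check both that $\tilde{F}$ is integrable there (so that $M_{t^*}$ is well defined) and that the measure-zero singularity does not spoil the strict positivity of the quadratic form $x^T M_{t^*} x$. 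Once the boundedness of $\tilde{F}$ near the origin is recorded, both points and the remaining arguments are routine.
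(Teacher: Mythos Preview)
Your proposal is correct. Both your argument and the paper's rest on the positive definiteness lemma for $\tilde{F}$ together with continuity in $t$, but the way this is exploited differs. You integrate $\tilde{F}$ over $\Omega_{t^*}$ to obtain a positive definite, hence invertible, matrix $M_{t^*}$, and then argue that $R^T M_{t^*} G\dot{x}_s(t^*)$ is the image of a nonzero vector under a composition of invertible linear maps. The paper instead passes to the coordinate vector $\tilde{q}(t)=T(R)^{-1}G\dot{x}_s(t)$ and observes that the scalar $\sum_i \tilde{q}_i(t)\,u_i(t)$ equals $\dot{y}_s(t)^T M_t\,\dot{y}_s(t)>0$, so that one particular linear functional of $u(t)$ is positive; it then recovers $\|u\|_{L^2(I_{t_s})}>0$ via a short chain of norm inequalities on the interval $(t^*-\varepsilon,t^*+\varepsilon)$. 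Your route is a bit more direct and avoids the detour through the coefficients $\tilde{q}_i$ and the auxiliary estimates, at the mild cost of needing the extra observation that $M_{t^*}$ is well defined (boundedness of $\tilde{F}$ near the origin). The paper's route, in turn, only ever needs positivity of the scalar $\dot{y}_s^T\tilde{F}\dot{y}_s$, not full invertibility of $M_t$. Your explicit dominated-convergence justification of continuity, carried out on the fixed domain $\Omega$, also makes precise a step the paper leaves implicit.
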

\begin{proof}
	In the following, we denote $y_s := Gx_s$. As $\left\lbrace R_i, \enspace i=1,...,3\right\rbrace$ forms a basis of $\mathbb{R}^3$, it holds that 
	\begin{equation*}
		\forall t \in I_{t_s}  :\quad  \exists q_i \in \mathbb{R}, \enspace i=1,...,3: \quad  
		\sum_{i=1}^3 q_i R_i = \dot{y}_s\left( t\right). 
	\end{equation*} 
	Therefore, it exists $\tilde{q}_i \in \mathcal{C}\left( I_{t_s} \right), \enspace i=1,...,3 : \quad \sum_{i=1}^3 \tilde{q}_i R_i = \dot{y}_s$. Setting $T(R):=\left[ R_1 \enspace R_2 \enspace R_3\right]$, continuity of those functions $\tilde{q}_i$ follows from 
 	\begin{equation*}
		\left[ R_1 \enspace R_2 \enspace R_3\right] \tilde{q} = \dot{y}_s \quad \Rightarrow \tilde{q} = T(R)^{-1} \dot{y}_s, 
	\end{equation*} 
	as $\left\| T(R)^{-1}\right\| <\infty$. 
	For all $t$ with $\dot{y}_s\left( t\right) \neq 0$, it holds that 
	\begin{equation}
		\sum_{i=1}^3 \int_{\Omega_t } \tilde{q}_i \left( t\right) R_i^T \tilde{F}\left( \xi\right) \dot{y}_s\left( t\right) \dx{\xi } = \int_{\Omega_t }\underbrace{\dot{y}_s\left( t\right) ^T \tilde{F}\left( \xi\right) \dot{y}_s\left( t\right)}_{>0\enspace \forall \xi} \dx{\xi } >0. 
	\end{equation}
	For $t^* \in I_{t_s}  $ with $\dot{y}_s\left( t^*\right) \neq 0$ there exists, by the continuity of $\dot{x}_s$, $\varepsilon >0$ such that 
	\begin{align*}
		0 & < \int_{\left( t^*-\varepsilon, t^* + \varepsilon\right) } \left( \sum_{i=1}^3 \tilde{q}_i \left( t\right) \underbrace{\int_{\Omega_t }R_i^T \tilde{F}\left( \xi\right) \dot{y}_s\left( t\right) \dx{\xi}}_{:=u_i\left( t\right) }\right)^2 \dx{t} 
		 = \left\| \sum_{i=1}^3 \tilde{q}_i u_i\right\|_{L^2\left( t^*-\varepsilon,\enspace t^* + \varepsilon\right)} ^2 \\
		& \leq 4 \sum_{i=1}^3 \left\| \tilde{q}_i u_i \right\|_{L^2\left( t^*-\varepsilon,\enspace t^* + \varepsilon\right)} ^2 
		 \leq  4 \sum_{i=1}^3 \left\| \tilde{q}_i u_i \right\|_{L^2\left( I_{t_s} \right) } ^2 \\
		& \leq 4C_{\tilde{q}} \sum_{i=1}^3 \left\| u_i\right\| ^2_{L^2\left( I_{t_s} \right) } 
		 = 4C_{\tilde{q}} \left\| u\right\|^2_ {L^2\left( I_{t_s} \right) },
	\end{align*}
	where $C_{\tilde{q}}>0 $ denotes an upper bound on $\left\| \tilde{q}_i\right\|_{L^2\left( I_{t_s} \right) }$ for $i=1,..,3$, which exists as $\tilde{q_i} \in \mathcal{C}\left( I_{t_s}  \right)$.
	Denoting now 
	\begin{align}
	    \left( A_{t_s} \chr{\Omega}\right) \left( t\right)& =
	    - \int_{\Omega_t } \mu_0 m_0 R^T \tilde{F}\left( \xi\right) \left|\mathrm{det} \enspace G^{-1}\right|  G\dot{x}_s\left( t\right) \dx{\xi} 
	     = -\mu_0 m_0 \left|\mathrm{det} \enspace G^{-1}\right| u\left( t\right) 
	\end{align}
	yields the result
	  $\left\| A_{t_s} \chr{\Omega}\right\|_{L^2(I_{t_s} )} >0  $
 and thus 
$    A_{t_s}\chr{\Omega} \neq 0.$
\end{proof}
\begin{remark}
A standard MPI scanner usually consists of orthogonal receive coils such that $ \left\lbrace R_i, i=1,...,3 \right\rbrace$ forms even an orthogonal basis of $\mathbb{R}^3$. 
\end{remark}
\begin{remark}
    By assuming $\left\| \dot{x}_s\left(t\right) \right\|_2 \neq 0$ for all $t \in \left[0,T\right]$, it can also be shown by the same arguments that for the particular choice of $I_{t_s}=\{t_s\}$ the operator $A_{t_s}: L^l(\Omega) \to \R^3 $ with $c \mapsto \left( \int_\Omega c(x) s_i(x,t_s) \dx{x}\right)_{i=1,\hdots,3}$ fulfills  $(A_{t_s}\chr{\Omega}) \neq 0$ for all $t_s \in \left[0,T\right]$. For our application, this stronger assumption can be fulfilled by choosing an MPI scanner with sinusoidal excitation frequencies for the drive-field. This choice results in cosine functions for $\dot{x}_s\left(t\right)$, which do not share zeros within the timespan $\left[0,T\right]$ by the  choice of the frequency dividers. However, cosine excitation functions lead to sinusoidal derivatives, which share a zero at $t=0$ and thus do not fulfill the stronger assumption in general. 
\end{remark}
We have thus shown that the static MPI forward operator for a scanner with sinusoidal excitation fulfills the regularity assumption $ \left(A_{t_s} \chr{\Omega}\right) \neq 0$ as well as boundedness of the operators $A_{t_s}$ for desired $t_s$, which is necessary to prove existence of a minimizer in a joint motion estimation and image reconstruction problem. 
Note that the consideration of measurements in $L^2(I_{t_s})$ for time points $t_s$ already indicates that in the context of MPI we need to carefully consider time on different scales as MPI is characterized by a time-dependent measurement process. If one would assume motion in $c$ and the measurement process providing a measurement at each time on the same temporal scale, this would require reconstructing an entire spatially resolved image from one measurement in $\R^3$.
The issue of different temporal scales will be addressed in the following part on the dynamic MPI problem.

\subsection{Dynamic MPI}
\label{subsec:dynamicMPI}
In the following, we will derive the dynamic MPI forward model in order to understand model inaccuracies and modeling errors, which occur in dynamic MPI. 

Extending the MPI forward model to a dynamic tracer distribution $c: \Omega \times \left[0,T\right] \rightarrow \mathbb{R}^{+}$ and assuming a homogeneous coil sensitivity leads to 
\begin{align}
	u_i\left( t\right) &= -\mu_0 R_i^T \frac{\mathrm{d}}{\mathrm{d}t} \int_{\Omega} c(x,t)\bar{m}(x,t) \mathrm{d}x \notag \\
	&= -\mu_0 R_i^T \int_{\Omega} c(x,t) \dd{t}\bar{m}(x,t) + \bar{m}(x,t)\dd{t}c(x,t)\mathrm{d}x,
	\label{eq:fullVWModel}
\end{align}
for $i=1,..,L$.
This model was first mentioned in \cite{Gdaniec.2020} but was applied to the special case of periodic motions. It was proposed in \cite{brandt2021modeling} to use the full dynamic model \cref{eq:fullVWModel} for image reconstruction. However, although the reconstructions are of high quality, they are computationally too demanding to be implemented in practice \cite{brandt2021modeling}. 
Moreover, in the case of a fully calibrated system function it is still an unsolved problem how to obtain the full dynamic model. 
Limiting ourselves to dynamic distributions whose temporal change provides negligible contributions to the signal, yields
\begin{equation}
	u_i\left( t\right) \approx -\mu_0 R_i^T \int_{\Omega} c(x,t) \dd{t}\bar{m}(x,t) \mathrm{d}x = \int_{\Omega} s_i(x,t) c(x,t) \mathrm{d}x.
	\label{eq:simpModel} 
\end{equation}
This is a common assumption in dynamic MPI reconstruction which has also been used for periodic motion using virtual frames \cite{Gdaniec.2020}. In the following, we will reconstruct dynamic concentrations with non-periodic motion by suppressing artifacts by joint reconstruction of concentration and motion.


One important aspect which need to be taken into account in this context is the interplay of different temporal scales, a fine temporal scale of the measurement process itself, i.e., we need to consider time-series of voltage measurements to gather the spatial information of the image, and the temporal scale of the motion.
For this we denote the longest duration, for which the tracer distribution remains approximately static by $\Delta t$, i.e., we exploit the approximation 
\begin{equation}
	c(x,t) \approx c(x,t_s) \qquad \forall t \in \left[ t_s-\Delta t, t_s  \right], \forall x \in \Omega.
\end{equation}
Typically, a reconstruction of the concentration $c\left( x,t_s\right)$ at time $t_s$ requires at least data from a complete drive-field cycle, i.e., $\Delta t \geq T_R$. 
This poses no problem if the motion is sufficiently slow.
For $\Delta t\geq T_R$ we have a quasi-static tracer distribution that changes only slightly during one drive-field cycle and can be handled as static data in good approximation. However, if $\Delta t < T_R$ one has to consider more severe motion artifacts. 
Depending on the actual setting it can be seen as a tradeoff between potentially emerging motion artifacts and loss of spatial information.
In the following we restrict ourselves to the assumption $\Delta t = T_R$ which guarantees encoding the maximum amount of spatial information in the desired MPI setting but where with potentially higher probability motion artifacts emerge. 
As a result we consider
\begin{equation}
	\left(A_{t_s} c\right) \left( t \right)  = \int_{\Omega}  s\left( x,t\right) c\left( x,t_s\right) \mathrm{d}x, \quad t \in I_{t_s}:=\left[ t_s-T_R,t_s \right],
	\label{eq:fwop}
\end{equation}
for timepoint $t_s\geq T_R$ to reconstruct  $c(x,t_s)$ from $u$ obtained on $I_{t_s}$. In order to reduce the effects of this approximation in our reconstruction, we jointly reconstruct images and the motion in between time frames such that both tasks will endorse each other. If images and motion are sufficiently linked and arising gaps in the data are sufficiently interpolated, one might also use sub-frame reconstruction in case of dynamic data (cf. \cref{fig:timescales}). In this approach, we use measurements obtained during an interval of maximum length $\Delta t< T_R$ for reconstruction of the concentration which on its own will not be sufficient but can be so, if subsequent frames are tied strong enough.  

\begin{figure}[tbhp]
	\centering
	\includegraphics[width=\textwidth]{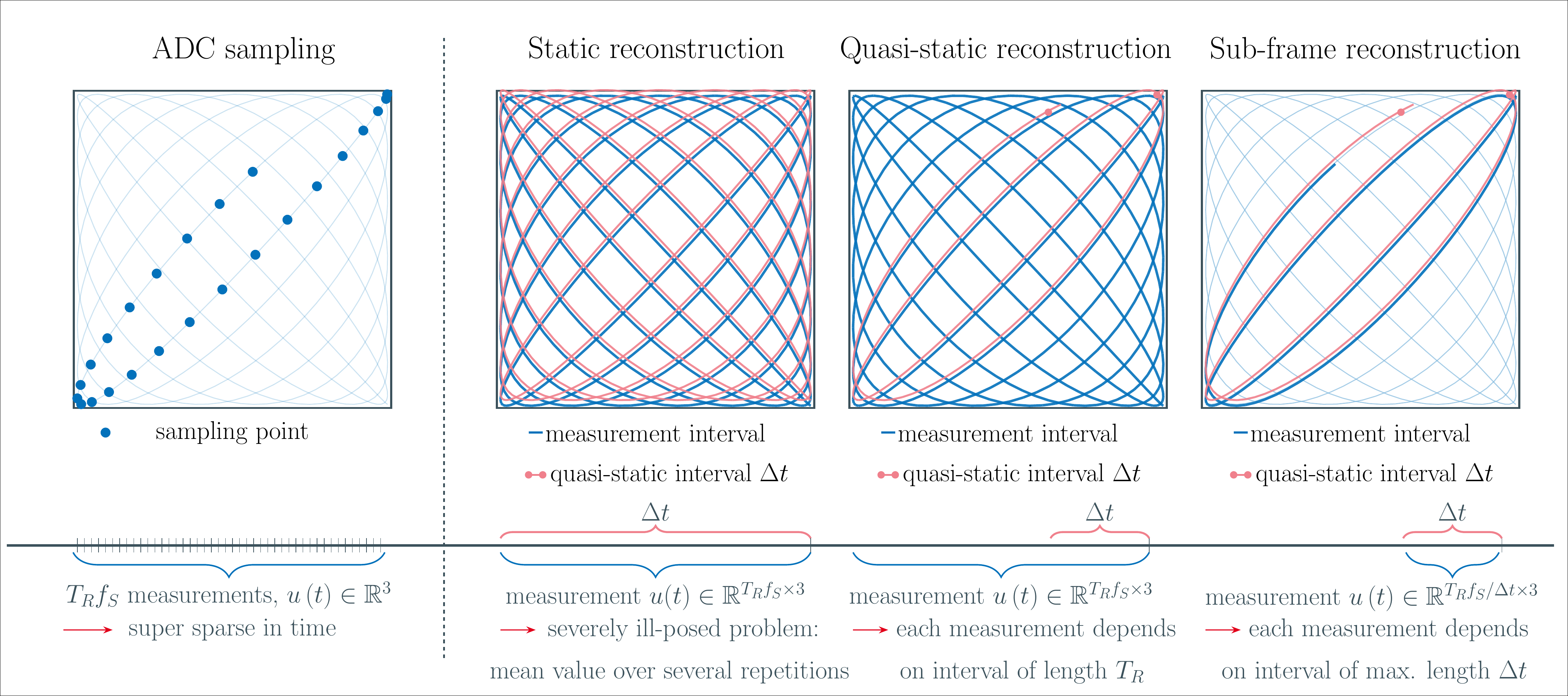}
	\caption{The finest time scale in MPI reconstruction is defined by the sampling rate of the analog-to-digital converter (ADC), which samples measurements at a frequency $f_S$ during the repetition time $T_R$. In static reconstruction, we pool all measurements obtained during one drive-field cycle and then average over the drive-field cycles in order to gain data with high SNR. The measurement interval is at most as long as the quasi-static interval. In dynamic reconstruction, the quasi-static interval $\Delta t$ can be smaller than $T_R$. Still, we need to use data from a full drive-field cycle in order to reconstruct the full FOV (cf. \cref{eq:fwop}). To reduce the computational complexity and reduce motion artifacts, one could use sub-frame reconstruction in combination with coupling of reconstructions at the different time steps through motion. Pooled measurements are then obtained during an interval of maximum length $\Delta t$.}
	\label{fig:timescales}
\end{figure}

For numerical reconstruction, we need discretization in space and time. For the time discretization, the finest time scale we can use is defined by the analog-to-digital converter which converts the analog measured signal to a digital one. The different time scales as described above are illustrated in Figure \ref{fig:timescales}, we are in the quasi-static setting, if not mentioned explicitly. The discretization in space is automatically obtained by the calibration of the system matrix.

We note that typical regularization terms for the image reconstruction task in MPI are covered by assumption \cref{eq:desC}. Depending on the specific application for which MPI is used, either the optical flow constraint (i.e. instrument tracking) or the mass conservation constraint (i.e. blood flow visualization) can be an appropriate choice for the motion model. In any case, existence of a minimizer is guaranteed by \cref{thm:existence}.  Before we state our numerical results for MPI reconstruction, we give some more details on the algorithms used to solve the minimization problem \cref{eq:min_prob2}.

\section{Algorithm}
\label{sec:algorithm}
To solve problem \cref{eq:min_prob2}, i.e. 
\begin{equation*}
\min_{c,v} \int_{0}^{T} D\left( A\left( c, t\right), u\left( t\right) \right) + \alpha R\left( c(\cdot,t)\right) + \beta S\left( v(\cdot,t)\right) + \gamma T\left( c\left( \cdot,t\right),\dd{t}c\left( \cdot,t \right) ,v\left( \cdot, t\right) \right)  \mathrm{d}t\,,
 \end{equation*}
 we apply alternating minimization, i.e. we solve for tracer concentration $c$ and motion (displacement field) $v$ in between subsequent frames alternately following the idea of \cite{Burger.2018}. We consider the algorithmic approaches to the two subproblems in the following sections. We derive the algorithmic details for $n=3$ spatial dimensions which then includes lower dimensional problems. 
\subsection{The motion estimation subproblem}
\label{subsec:motion_estimation_subproblem}
For a given concentration sequence $c$, the motion estimation subproblem is defined by 
 \begin{equation}
 	\label{eq:motion_est} \min_{v} \int_{0}^{T} \beta S\left( v(\cdot,t)\right) + \gamma T\left( c\left( \cdot,t\right), \dd{t}c\left( \cdot,t \right) ,v\left( \cdot, t\right) \right)  \mathrm{d}t.
 \end{equation}
 We derive an algorithm for each of the considered motion models. 
 %
\subsubsection{The optical flow motion model}
We start with the optical flow constrained problem. For a spatially but not time-dependent regularization term $S$ our minimization problem reads  
	\begin{equation}
	    \label{eq:probofme}
		\min_v  \int_0^T \beta S\left( v \left( \cdot, t\right)\right) + \gamma\left\| \dd{t}c\left(\cdot, t \right) + \nabla c\left(\cdot, t \right) \cdot v\left(\cdot, t\right)\right\|_{L^1\left( \Omega\right) }  \mathrm{d}t,
	\end{equation}
	where the $L^1$-norm could be easily replaced by an $L^2$-norm. However, we choose an $L^1$-norm as to allow for outliers, which might exist due to noise artifacts in the reconstructed image sequences. 
	The regularization functional $S$ is assumed to be proper, lower semicontinuous and convex as well as prox-tractable. For our application, we usually consider total variation (TV), i.e. $L^1$-, or $L^2$-regularization of the gradient, respectively, i.e. 
 	\begin{align}
		S_1\left( v\left(\cdot, t\right)\right) &= \left\| \nabla v \left(\cdot, t \right) \right\|_{L^1(\Omega)^n} \qquad \mathrm{or} \qquad
		S_2\left( v\left( \cdot, t \right)\right) = \frac{1}{2}\left\| \nabla v \left(\cdot, t\right) \right\|_{L^2(\Omega)^n}^2 .
	\end{align}
	In order to solve \cref{eq:probofme} with regularization functional $S = S_1$, we apply primal-dual splitting with primal functional $f$ and  $g$ to be Fenchel-conjugated defined by
	\begin{align*}
		f\left( v\right) &= \int_0^T \gamma \left\| \dd{t} c\left( \cdot, t \right) + \nabla c\left(\cdot, t \right) \cdot v\left(\cdot, t \right)\right\|_{L^1\left( \Omega\right) } \mathrm{d}t\\
		g\left( Bv\right) &= \int_0^T\beta \left\| \nabla v\left( \cdot, t \right) \right\|_{L^1(\Omega)^n} \mathrm{d}t.
	\end{align*}
with the linear operator 
	\begin{equation*}
		B = \left( \begin{array}{ccc}
			\nabla & 0 & 0 \\ 0 & \nabla &0\\ 0 & 0 & \nabla
		\end{array}\right) 
	\end{equation*}
	leading to the adjoint operator 
	\begin{equation*}
		B^* = \left( \begin{array}{ccc}
			-\mathrm{div} & 0 &0 \\ 0 & -\mathrm{div} &0\\ 0 & 0 & \mathrm{-div}
		\end{array}\right) .
	\end{equation*}
	The corresponding Fenchel-conjugated functional is given by 
	\begin{equation*}
		g^*\left( y_1, y_2, y_3\right) = \int_0^T \beta \left( \ind{\left\lbrace \left\|y_1\right\| _\infty \leq \beta\right\rbrace  } +  \ind{\left\lbrace \left\|y_2\right\| _\infty \leq \beta\right\rbrace  } +  \ind{\left\lbrace \left\|y_3\right\| _\infty \leq \beta\right\rbrace  } \right) \mathrm{d}t. 
	\end{equation*}

 %
	\subsubsection{The mass conservation motion model}
	The motion estimation problem in this case given is by 
	\begin{equation}
		\min_v \int_{0}^{T} \frac{\beta}{2} \left\| \nabla v \left(\cdot, t \right)\right\|_{L^2\left( \Omega\right)^n } ^2 + \gamma \left\| \dd{t}c\left(\cdot, t \right) + \nabla \cdot \left( c\left(\cdot, t \right)\cdot v\left(\cdot, t \right)\right) \right\|_{L^1\left( \Omega\right) }  \mathrm{d}t, 
	\end{equation}
	where we choose an $L^2$-norm on the gradient in order to derive the algorithmic scheme for both regularizers. In the numerical part of this manuscript, we will state clearly which regularization is used and we might combine any of the regularizers with each motion model.  
	The problem is again solved by primal-dual splitting with 
	\begin{align*}
		f\left( v\right) & = 0\\
		g\left(Bv\right) &  =  \int_0^T \frac{\beta}{2} \left\| \nabla v\left(\cdot, t \right)\right\|_{L^2\left( \Omega\right)^n } ^2 + \gamma \left\| \dd{t}c\left(\cdot, t \right) + \nabla \cdot \Bigl( c\left(\cdot, t \right)\cdot v\left(\cdot, t \right)\Bigr) \right\|_{L^1\left( \Omega\right) } \mathrm{d}t\\
	\end{align*}
	with the linear operator 
	\begin{equation*}
		B = \left( \begin{array}{ccc}
			 \nabla & 0 &0\\ 0 &\nabla &0 \\ 0 &0 & \nabla\\ \partial_x c & \partial_y c & \partial_z c \\
		\end{array}\right) 	
	\end{equation*}
	using the notation $(\partial_x c)(v) = \dd{x}(cv)$ and its corresponding adjoint operator 
	\begin{equation*}
	B^* = \left( \begin{array}{cccc}
		-\mathrm{div} & 0 & 0& -c\partial_x \\ 0 &-\mathrm{div}&0 & -c\partial_y \\ 0 & 0 & -\mathrm{div} & -c\partial_z
	\end{array}\right) .	
	\end{equation*}
	This leads to the Fenchel-conjugate 
 	\begin{multline*}
 	    g^*\left( y_1,y_2,y_3,y_4\right) = \int_0^T \frac{1}{2\beta} \left( \left\|y_1\right\|_{L^2(\Omega)^n}^2 + \left\|y_2\right\|_{L^2(\Omega)^n}^2 + \left\|y_3\right\|_{L^2(\Omega)^n}^2 \right) \\+ \gamma \left( \ind{\left\lbrace \left\| y_4\right\|_\infty \leq \gamma\right\rbrace  } - \left\langle \dd{t}c,y_4\right\rangle \right) \mathrm{d}t.
		\end{multline*}\\
	For both motion models, we can now state the saddle point problem equivalent to the original minimization problem
 \begin{equation*}
     \min_v f\left(v\right) + g\left(Bv\right).
 \end{equation*} 
 More particularly, the saddle point problem is given by 
 \begin{equation*}
     \min_v \sup_y \left \langle Bv,y\right \rangle -g^*\left(y\right) + f\left(v\right).
 \end{equation*}
 In imaging applications, such a problem is often solved by the primal-dual hybrid gradient (PDHG) method introduced by \cite{pock2009algorithm}.\\
\subsubsection{PDHG algorithm for motion estimation}
	Using the above notations, the main motion estimation algorithm is given by \cref{alg:PDHG}. 
	\begin{algorithm}[ht]
		\caption{PDHG algorithm for motion estimation}
		\label{alg:PDHG}
		\begin{algorithmic}[1]
			\STATE \textbf{input} initial value $\mathbf{v}^0 = \left( u,v,w\right) $, $\tilde{v}^0 = v^0$, initial value $y^0$, $\theta$, stepsize parameters $\sigma$, $\tau$, operators $B$, $B^*$  \\
			\FOR{k=0,1,2,...,K-1}
			\STATE $\bar{y}^{k+1} = y^k + \sigma B\tilde{\mathbf{v}}^k$\\
			\STATE $y^{k+1} = \mathrm{prox}_{\sigma g^*}\left( \bar{y}^{k+1}\right) \quad \quad $ \textit{\% dual update}\\
			\STATE $\bar{\mathbf{v}}^{k+1} = \mathbf{v}^k - \tau B^*y^{k+1}$\\
			\STATE $\mathbf{v}^{k+1} = \mathrm{prox}_{\tau f}\left( \bar{\mathbf{v}}^{k+1}\right) \quad \quad $  \textit{\% primal update }\\
			\STATE $\tilde{\mathbf{v}}^{k+1} = \mathbf{v}^{k+1} + \theta \left( \mathbf{v}^{k+1} - \mathbf{v}^{k}\right)  \quad \quad$ \textit{\% extrapolation step}\\
			\ENDFOR
		\end{algorithmic}
	\end{algorithm}\\
	The proximal operator $\mathrm{prox}_{\tau f}$, which occurs in the algorithm, is defined by 
 \begin{equation*}
    \mathrm{prox}_{\tau f}\left(x\right) = \arg \min_u f\left( u\right) + \frac{1}{2\tau} \left\| u-x\right\|^2\,, 
 \end{equation*}
 where the norm is chosen according to the underlying spaces. 
	\subsubsection{Operator discretization}
	As proposed by \cite{Burger.2018}, we use forward and backward differences for discretization of the differential operator and its adjoint, respectively, for the spatial regularization terms. Within the motion constraint, we use central differences, which are self-adjoint, for the spatial derivatives and forward differences for the time derivatives to obtain a stable scheme. 
	%
	\subsubsection{Coarse-to-fine strategy and warping}
	\label{sec:warping}
	The problem of handling large-scale motion is two-fold. 
	First, the linearized optical flow constraint is derived by a Taylor approximation, which will hold only for motion of small absolute value. 
	Second, the absolute value of our motion estimates is bounded by the choice of our discrete differential operator. \\
	There are two  different basic approaches to the multiscale strategy. One either computes only motion increments on each scale and warps images in between the different scales \cite{Brox.2004} or one uses the results from coarser scales for initialization but then updates the whole value in each iteration \cite{MeinhardtLlopis.2013}. \\
	\textbf{The optical flow motion model: Multiscale and warping}
	We use the first approach in case of the optical flow constraint as it was shown that the warping technique implements the non-linearized optical flow equation \cite{Brox.2004}.  The process of motion estimation with a coarse-to-fine strategy and warping is illustrated in \cref{fig:warping}. \\
    First, the image sequence is downsampled to the different scales yielding an image pyramid. Then, beginning on the coarsest scale, the motion in between succeeding frames is computed and prolongated to the next finer scale. It is then applied to the image of the second time step by warping. The whole procedure is then repeated on each scale for the motion increment until the finest scale is reached.  
	Note that image warping can be implemented by a forward mapping or a reverse mapping. When using a forward mapping, we iterate over the pixels of the source image and compute the new position $(x,y,z)_{new}$ after the motion is applied. The new position is then a floating point and we use linear interpolation to re-grid the image. However, when using the forward mapping many source pixels can map to the same destination and some pixels in the destination image might not be covered at all. For this reason, warping is usually defined as a reverse mapping. We then iterate over the pixels of the destination image and find the origin $(x,y,z)_{old}$ in the source image for every pixel. The gray value to be transported to the destination image is then again determined by linear interpolation. 
 
 Reverse warping thus in a sense fulfills the gray value constancy assumption and can be combined with the optical flow constraint, as that constraint will lead to invertible flow fields. However, in the case of mass conservation as motion model, there might exist sources and sinks and the flow field is then not invertible. \\
	\begin{figure}[tbhp]
 \centering
       
    \includegraphics[width=0.8\textwidth]{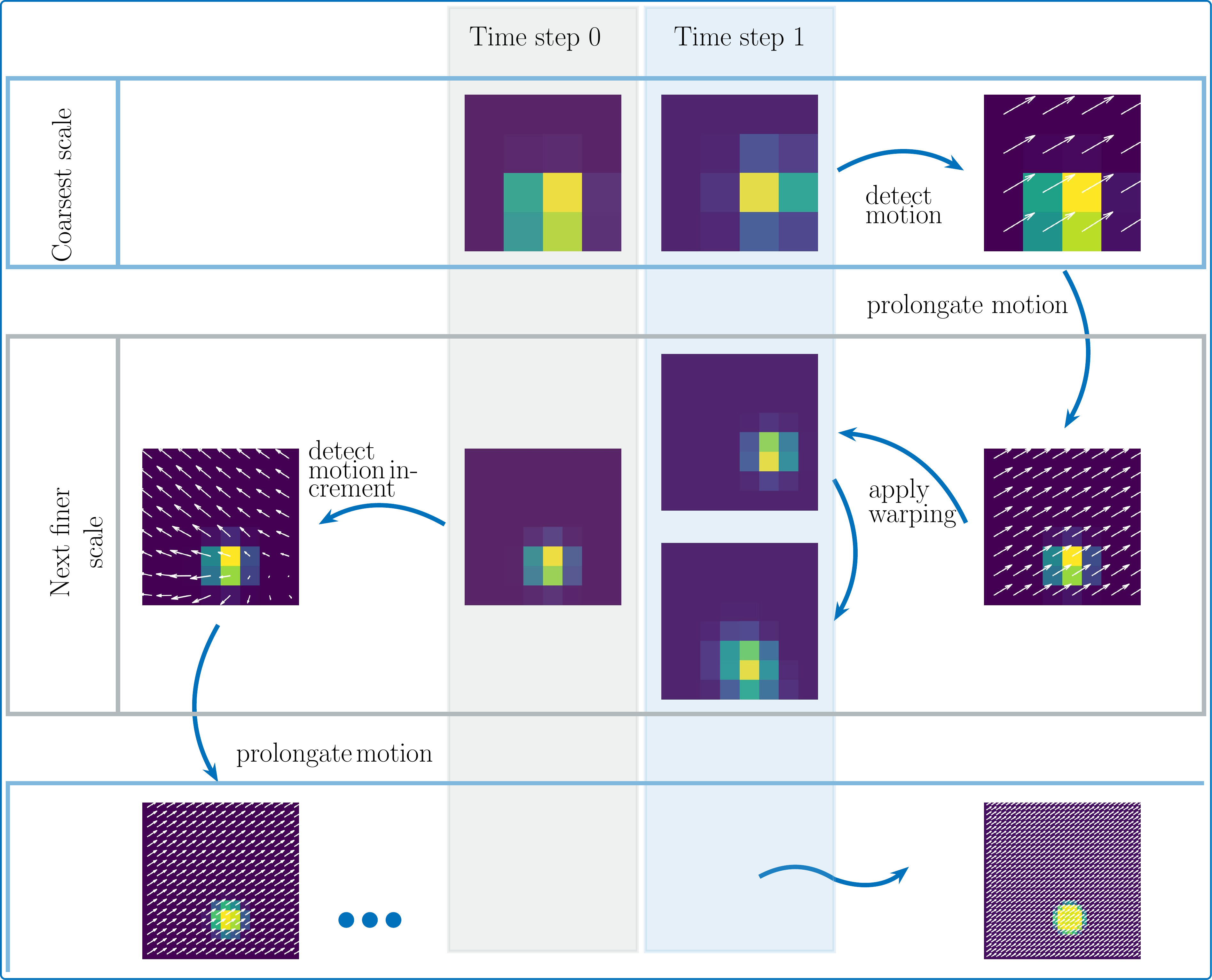}
		\caption{The multi-resolution framework in combination with warping works as follows. Starting from the coarsest scale, motion in between the images corresponding to two succeeding time steps is detected by a motion estimation algorithm. The flow field is then prolongated to the next finer scale and applied to the image of the second time step in terms of warping. These steps are repeated on each scale. }
		\label{fig:warping}
	\end{figure}%
\textbf{The mass conservation motion model: Multiscale only}	
In case of the mass conservation constraint, we cannot use warping due to the aforementioned reasons. However, we need a multi-scale strategy in order to deal with large motion estimates. We thus implement a coarse-to-fine strategy, where the motion estimate on each scale is initialized by the result of the last scale and updated after the increment is computed. 

The algorithm used on each scale is PDHG algorithm as explained in \Cref{subsec:motion_estimation_subproblem}. This allows for straight-forward integration of estimates on previous scales. 

\subsection{The image reconstruction subproblem}
The image reconstruction subproblem for a given motion estimate $v$ is defined by 
 \begin{equation}
 	\label{eq:im_reco} \min_{c} \displaystyle\int_{0}^{T} D\left( A\left( c, t\right), u\left( t\right) \right) + \alpha R\left( c(\cdot,t)\right) + \gamma T\left( c\left( \cdot,t\right),\dd{t}c\left( \cdot,t \right) ,v\left( \cdot, t\right) \right)  \mathrm{d}t\,.
 \end{equation}
We use Fused Lasso regularization as our default regularization on the concentration as that fits the expected behavior of the tracer material in MPI very well \cite{storath2016edge, zdun2021fast}. We thus have 
\begin{equation}
\label{eq:regularization_imagereco}
	\alpha R\left( c(\cdot,t)\right) = \alpha_1 \left\| c\left( \cdot,t\right) \right\| _{L^1\left( \Omega\right) } + \alpha_2\left\| \nabla c\left( \cdot, t\right) \right\|_{L^1\left( \Omega\right)^n }.  
\end{equation}
For the data fitting term, we choose either the $L^1$- or $L^2$-norm discrepancy \cite{KluthJin2020}, i.e. 
\begin{align*}
	D_1\left( A \left( c, t\right) , u\left( t\right) \right) &= \left\| A\left(c, t\right)-u\left( t\right)\right\|_{L^1\left( \mathbb{R}^d\right) },\\  
 	D_2\left( A \left( c, t\right) , u\left( t\right) \right) &= \frac{1}{2}\left\| A\left(c, t\right)-u\left( t\right)\right\|^2_{L^2\left( \mathbb{R}^d\right) }.
\end{align*}
 Again, we design one algorithm for each motion model.
 %
 \subsubsection{The optical flow motion model}
 As we expect the motion to exceed one pixel per frame, the linearized constraint
\begin{equation*}
		m_1\left( c,v\right) = \dd{t}c  + \nabla c \cdot v  =0
\end{equation*}
does not hold. 

We therefore implement the non-linearized version of the gray-value constancy assumption according to \eqref{eq:conv_gray_value}, i.e. in the semi-discretized setting with respect to time step $\delta_t$ we use 
\begin{equation*}
	T\left( c\left( \cdot, t\right),c\left( \cdot, t+\delta_t\right) ,v \left(\cdot, t \right)\right) = \left\| c(\cdot+\delta_t v\left(\cdot, t\right),t+\delta_t) - c(\cdot,t) \right\|_{L^1\left( \Omega\right)}= \left\| \mathcal{W}c\left( t\right) \right\|_{L^1\left( \Omega\right)}.    
\end{equation*}
with 
\begin{equation}
\left( 	\mathcal{W}c\right) \left( t\right) := -c\left( \cdot,t\right) + W^{t,\delta_t}c\left( \cdot,t+\delta_t\right) ,
\end{equation}
where $W^{t,\delta_t}$ maps a space-dependent image to a space-dependent image performing the reverse warping of the input image with respect to the displacement field estimate $v(\cdot,t)$.
Similar to the motion estimation subproblem, reverse warping (iterating over the destination image) is performed in order to cover all pixels of the destination image.

Applying primal-dual splitting to this setting and using the same notations as in the motion estimation subproblem leads to
\begin{align}
	f\left( c\right) &= \int_0^T \alpha_1 \left\| c\left( \cdot, t \right) \right\| _{L^1\left( \Omega\right) } \mathrm{d}t\\
	g\left( B c\right) & = \int_0^T \left\| A\left( c, t\right)-u\left(t\right)\right\|_{L^1\left( \mathbb{R}^d\right) } + \alpha_2\left\| \nabla c\left(\cdot, t \right) \right\|_{L^1\left( \Omega\right)^n } + \gamma \left\| \left(\mathcal{W} c\right) \left( t\right) \right\|_{L^1\left( \Omega\right)} \mathrm{d}t 
\end{align}
with the linear operator $	B = \left( \begin{array}{c}
		A, \nabla  , \mathcal{W}
	\end{array}\right)^T$
and its adjoint 
	$B^* = \left( \begin{array}{ccc}
		A^*, & -\mathrm{div},  & \mathcal{W}^*
	\end{array}\right).  	$
The corresponding Fenchel-conjugate is given by 
\begin{equation}
\label{eq:dualmotionest}
	g^*\left( y_1,y_2,y_3\right) = \int_0^T \ind{\left\lbrace \left\| y_1\right\|_\infty \leq 1\right\rbrace } + \left\langle y_1,u\right\rangle + \ind{\left\lbrace \left\|y_2\right\|_{3,\infty} \leq \alpha_2 \right\rbrace } + \ind{\left\lbrace \left\| y_3\right\|_\infty \leq \gamma\right\rbrace  } \mathrm{d}t.
\end{equation}
\subsubsection{The mass conservation constraint}
The mass conservation constraint 
\begin{equation*}
	m_2\left( c,v\right) = \dd{t}c + \nabla \cdot \left(cv \right) = 0.
\end{equation*}
is not linearized. We therefore use the constraint directly inside our penalty term, i.e. 
\begin{flalign*}
	T\left( c\left(\cdot, t \right), \dd{t}c\left(\cdot, t \right),v\left(\cdot, t \right)\right) &= \left\| \dd{t}c\left(\cdot, t \right) + \nabla \cdot \Bigl( c\left( \cdot, t \right)v\left(\cdot, t \right) \Bigr)\right\|_{L^1(\Omega)} \\
	&= \left\| \dd{t}c\left(\cdot, t \right) + c\left(\cdot, t \right)\nabla \cdot \Bigl( v \left(\cdot, t \right) \Bigr) + v\left(\cdot, t \right) \cdot \nabla c\left(\cdot, t \right) \right\|_{L^1(\Omega)} . 
\end{flalign*}
Applying again the notation from the motion estimation subproblem leads to 
\begin{align*}
	f\left( c\right) & =\int_0^T \alpha_1 \left\| c\left(\cdot, t \right) \right\| _{L^1\left( \Omega\right) } \mathrm{d}t\\
	g\left( Bc\right) &= \int_0^T \left\| A\left( c,t\right)-u\left(t\right)\right\|_{L^1\left( \mathbb{R}^d\right) } + \alpha_2\left\| \nabla c\left(\cdot, t\right) \right\|_{L^1\left( \Omega\right)^n }\\
	&+ \gamma \left\| \dd{t}c\left(\cdot, t \right) + c\left(\cdot, t \right)\nabla \cdot \Bigl( v\left(\cdot, t \right) \Bigr) + v\left(\cdot, t \right) \cdot \nabla c\left(\cdot, t \right) \right\|_{L^1(\Omega)} \mathrm{d}t , 
\end{align*}
where 
\begin{equation*}
		B = \left( \begin{array}{c}
		A\\ \nabla  \\ \dd{t} + v_1\dd{x} + v_2\dd{y} + v_3\dd{z} + \left( \nabla\cdot v\right)  I
	\end{array}\right),
\end{equation*}
and 
\begin{equation*}
	B^* = \left( \begin{array}{ccc}
		A^*, & -\mathrm{div},  & -\left( \dd{t} + v_1\dd{x} + v_2\dd{y} + v_3\dd{z} - \left( \nabla\cdot v\right)  I\right) 
	\end{array}\right).  	
\end{equation*} The Fenchel-conjugate corresponding to $g$ is the same as in \cref{eq:dualmotionest}. \\
\subsubsection{Stochastic PDHG}
To reduce the run time of the algorithm, we use the stochastic variant of PDHG introduced in 2018 by Chambolle et. al. \cite{chambolle2018stochastic} to solve the problem for both motion constraints. We split the dual variable $y_1$ related to the data fitting term row-wise into $M$ variables as $y_1 = \left( y_{1,1}, y_{1,2},...,y_{1,M} \right)^T$. This is possible due to the separability of the MPI forward operator. Accordingly, the linear operators $B_{1,1}, ... , B_{1,M}$ are defined by row-wise splitting of the forward operator, i.e. $A = B_1 = \left(B_{1,1}, B_{1,2}, ..., B_{1,M} \right)^T$. By default, we set $M=3$, relating the data batches to the three receive channels of a standard MPI scanner. For more details on appropriate data splitting for MPI data, we refer to \cite{zdun2021fast}. The resulting algorithm is stated in \cref{alg:SPDHG}. Note that $\sum_{j=1}^3 B_j^*y_j$ in the primal update step can also be updated instead of fully computed in each iteration. In this way, the computational burden of the algorithm is reduced, as $B_i^*y_i$ has to be computed for the updated index $i$ only, in case of $i=1$ even for $B_{1,m}^*y_{1,m}$ only if the $m$-th data batch is chosen. 

\begin{algorithm}[ht]
		\caption{SPDHG algorithm for image reconstruction}
		\label{alg:SPDHG}
		\begin{algorithmic}[1]
			\STATE \textbf{input} initial value $c^0 $, initial values $y_i^0$, $i \in \left\lbrace 1,...,3\right\rbrace$, $\theta$, stepsize parameters $\sigma$, $\tau$, operators $B_i$, $B^*$  \\
			\FOR{$k=0,1,2,...,K-1$}
            \STATE select {$j \in \left\lbrace1,2,3\right\rbrace $ randomly} \\
            \IF{$j=1$}
			\STATE \text{select} $m \in \left\lbrace1,...,M\right\rbrace$ \text{randomly}\\
			\STATE $\bar{y}_{1,m}^{k+1} = y_{1,m}^k + \sigma B_{1,m}\tilde{c}^k$\\
			\STATE $y_{1,m}^{k+1} = \mathrm{prox}_{\sigma g_{1}^*}\left( \bar{y}_{1,m}^{k+1}\right) \quad \quad $ \textit{\% dual update}\\
            \ELSE
			\STATE $\bar{y_j}^{k+1} = y_j^k + \sigma B_j\tilde{c}^k$\\
			\STATE $y_j^{k+1} = \mathrm{prox}_{\sigma g_{j}^*}\left( \bar{y_j}^{k+1}\right) \quad \quad $ \textit{\% dual update }\\
            \ENDIF
			\STATE $\bar{c}^{k+1} = c^k - \tau \sum_{j=1}^4 B_j^*y_j^{k+1}$\\
			\STATE $c^{k+1} = \mathrm{prox}_{\tau f}\left( \bar{c}^{k+1}\right) \quad \quad \hspace{0.8cm} $ \textit{\% primal update}\\
			\STATE $\tilde{c}^{k+1} = c^{k+1} + \theta \left( c^{k+1} - c^{k}\right) \quad \hspace{0.1cm} $ \textit{\% extrapolation step }\\
			\ENDFOR
		\end{algorithmic}
	\end{algorithm}
   
\section{Numerical experiments}
\label{sec:numerical}
Our numerical experiments are divided into three sections. We first use simulated data in a 3D setting in order to be able to compare our reconstructions to a ground truth. We can thus quantify improvements by the joint approach in the image reconstruction task as well as in the motion estimation task compared to the standard MPI reconstruction scheme. 
The second section then uses 3D measured data of a phantom that fulfills the optical flow constraint. Measurement sequences are available for different motion speeds during measurements such that we can compare the impact of the joint approach for different velocities. In this section, we primarily focus on the image reconstruction task 
and we also show that obtained motion estimates are more reliable by the proposed approach compared to estimates from images reconstructed by the Kaczmarz method by comparing the trajectories derived from the obtained flow fields.  
In the third section, we reconstruct in-vivo measurement data of a mouse heart. This application fulfills mass conservation but not the optical flow constraint. In contrast to the previous section, we will focus on the resulting motion estimates. 

\subsection{Experiments on 3D simulated data}

In order to have a ground truth available, we use simulated data for these first experiments. The scanner was modeled based on the pre-clinical MPI scanner (Bruker Biospin, Ettlingen) at the University Medical Center Hamburg-Eppendorf. The complete parameter setup can be found in \cref{app:sim_par}.  
The phantom is based on the rotation phantom used in \cite{Gdaniec2017} and in our experiments in \cref{sec:NumrotationPhantom}, but we add a movement along the $z$-direction yielding a ball moving along a spiral. \\
We compare several reconstruction techniques. First, we use standard frame-by-frame static reconstructions obtained by the Kaczmarz method using standard Tikhonov $L^2$- regularization combined with a positivity constraint \cite{dax1993row}, which corresponds to one of the most popular MPI reconstruction techniques \cite{knopp2017review}. Moreover, we consider frame-by-frame reconstruction by SPDHG algorithm using fused lasso regularization. In the proposed joint reconstruction setting, we test several different formulations. They do all apply fused lasso regularization on the tracer concentration and total variation regularization on the motion estimates, but differ in terms of the data fitting norm and the motion model. We use both $L^1$- and $L^2$-data fitting (denoted as cases $L^1$-D and $L^2$-D) and optical flow (OF) and mass conservation (MC) constraint, resulting in a total amount of four different combinations. All joint approaches are tested on 10 frames simultaneously in the simulated data setting. The quality of the reconstructed image sequences is assessed by structural similarity index measure (SSIM). All experiments are performed for different noise levels, more particularly we add Gaussian noise with mean value 0 and standard deviation of $0\%, 50\%$ and $100\%$ of the maximum absolute value of the measured signal to the measurements (in the time domain) before transforming the measurements to the Fourier domain.  
Prior to reconstruction, we use a data pre-processing similar to the one typically used for measured data. We transform matrix and data to the Fourier domain and, instead of applying an SNR threshold, we limit the maximum mixing order of the frequencies, which yields a similar outcome \cite{szwargulski2017influence}. Moreover, the data is split into real and imaginary part in order to perform the reconstruction on real numbers like in \cite{storath2016edge}.

The noise levels added at early stages in the simulation as mentioned above correspond to noise with standard deviation of 0\%, 2.7\% and 5.3\% on the signal used for reconstruction. 
Note that the noise-free setting is used for comparison only, but is not of practical relevance. The high noise level reconstructions resemble what we expect of measured data, whereas the middle noise level might be achieved in an optimal experimental setup. Note that our simulated forward operator is not corrupted by noise, which defines a fundamental difference to the setting in practice, where the forward operator is typically measured. 
Moreover, our simulation setting uses ideal magnetic fields, is based on the simple Langevin model for magnetization and neglects further noise sources such as background signals such that the reconstruction task is performed even in case of noisy simulated data under ideal conditions compared to the reconstruction of measured data.

\begin{table}[tbhp]
\footnotesize
    \caption{Best obtained values for SSIM for reconstructed image sequences of simulated data.}
    \label{tab:quality_simdata}
    \centering   
    \begin{tabular}{|c|c|c|c|c|c|c|}
        \hline
        Noise level & Kaczmarz & SPDHG& OF-$L^1$-D & OF-$L^2$-D & MC-$L^1$-D & MC-$L^2$-D \\
        \hline
        0.0 & 0.654 &0.955& \textbf{0.963} & 0.948 & \textbf{0.963} & 0.946 \\
        \hline
        0.5 & 0.653 &0.954& \textbf{0.963} & 0.947 & 0.962 & 0.946 \\
        \hline
        1.0 & 0.652 &0.949& \textbf{0.957} & 0.944 & 0.956 & 0.943 \\
        \hline
    \end{tabular}
\end{table}

Best reconstruction parameters (obtaining highest SSIM values) are stated in \cref{app:par_search}.
The comparison in \cref{tab:quality_simdata} shows that the Kaczmarz method yields solutions with lowest SSIM values. The reconstructed image sequences with highest SSIM values are heavily smoothed and do not resemble the phantom closely, in particular the concentration values are approximately 10 are 5 times smaller compared to the phantom. The illustration in \cref{fig:sim_phant} thus shows a reconstruction which obtained a SSIM value of only 0.26 due to severe noise artifacts, but features reasonably high concentration values.  

Using a more suitable regularization method and SPDHG algorithm considerably improves the reconstructed image sequences even when using a frame-by-frame reconstruction approach. Best results are obtained for the proposed joint reconstruction technique for an $L^1$-D term.  \Cref{fig:sim_phant} shows exemplary averaged intensity projections of the reconstructed images. 

\begin{figure}[tbhp]
    \centering
    \includegraphics[width=0.91\textwidth]
    {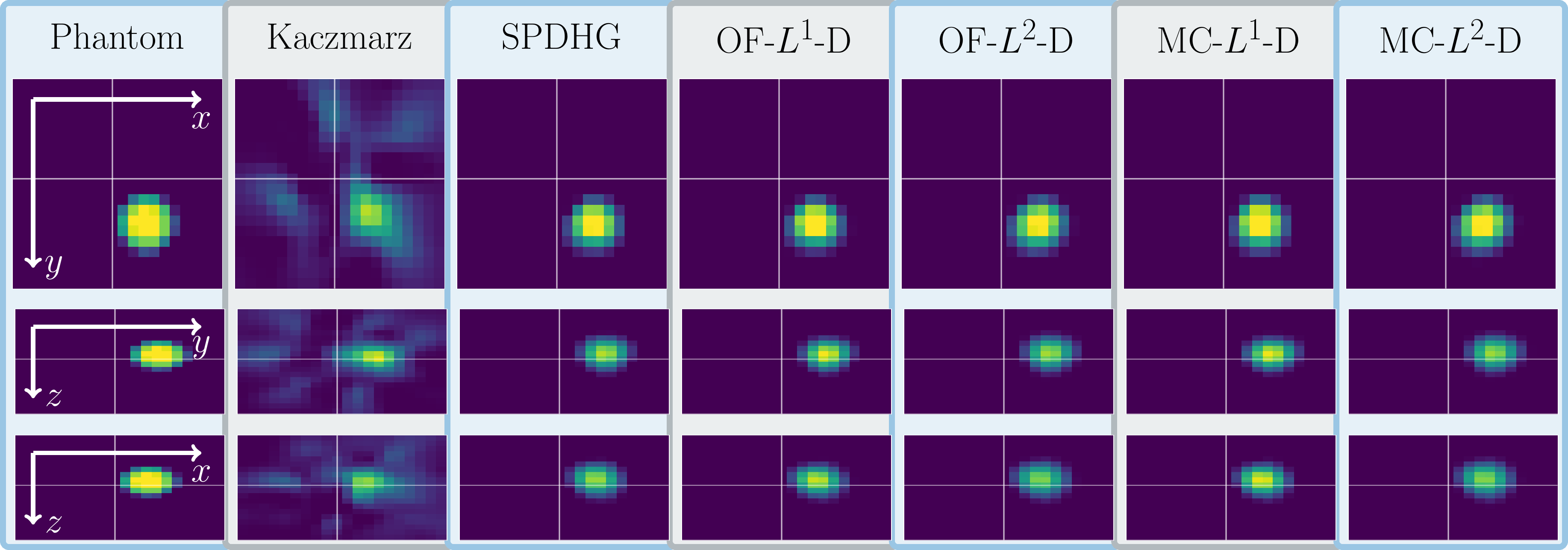}
    \centering\includegraphics[width=0.082\textwidth]{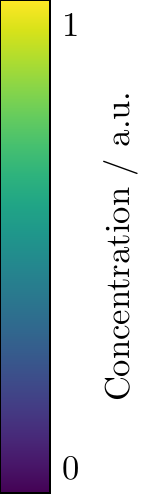}
    \caption{Reconstructed images of the simulated data for the second time step. The upper row depicts averaged intensity projections onto the $x-y$-plane, the middle row projects onto the $y-z$-plane and the bottom row onto the $x-z$-plane. Each column corresponds to one reconstruction algorithm. All images are plotted with the same colorbar viridis and windowed based on the full dynamic range of the phantom.}
    \label{fig:sim_phant}
\end{figure}

In a standard static setting, the poor image quality resulting from Kaczmarz method under noise is improved by averaging the measurements over multiple frames. This procedure yields a higher SNR in the measurements and thus reconstructed images of higher quality with less noise artifacts. However, if we apply frame averaging in the dynamic case, this results in severe motion artifacts as can be seen in \cref{fig:sim_av_artifacts}. For averaging of very few frames (5 in this example) the SNR is barely improved and motion artifacts occur already, but for averaging of 160 frames the motion artifacts make it impossible to locate the object as observed in \cite{Gdaniec2017} as well. 

 \begin{figure}[tbhp]
    \centering\includegraphics[width=0.5\textwidth]{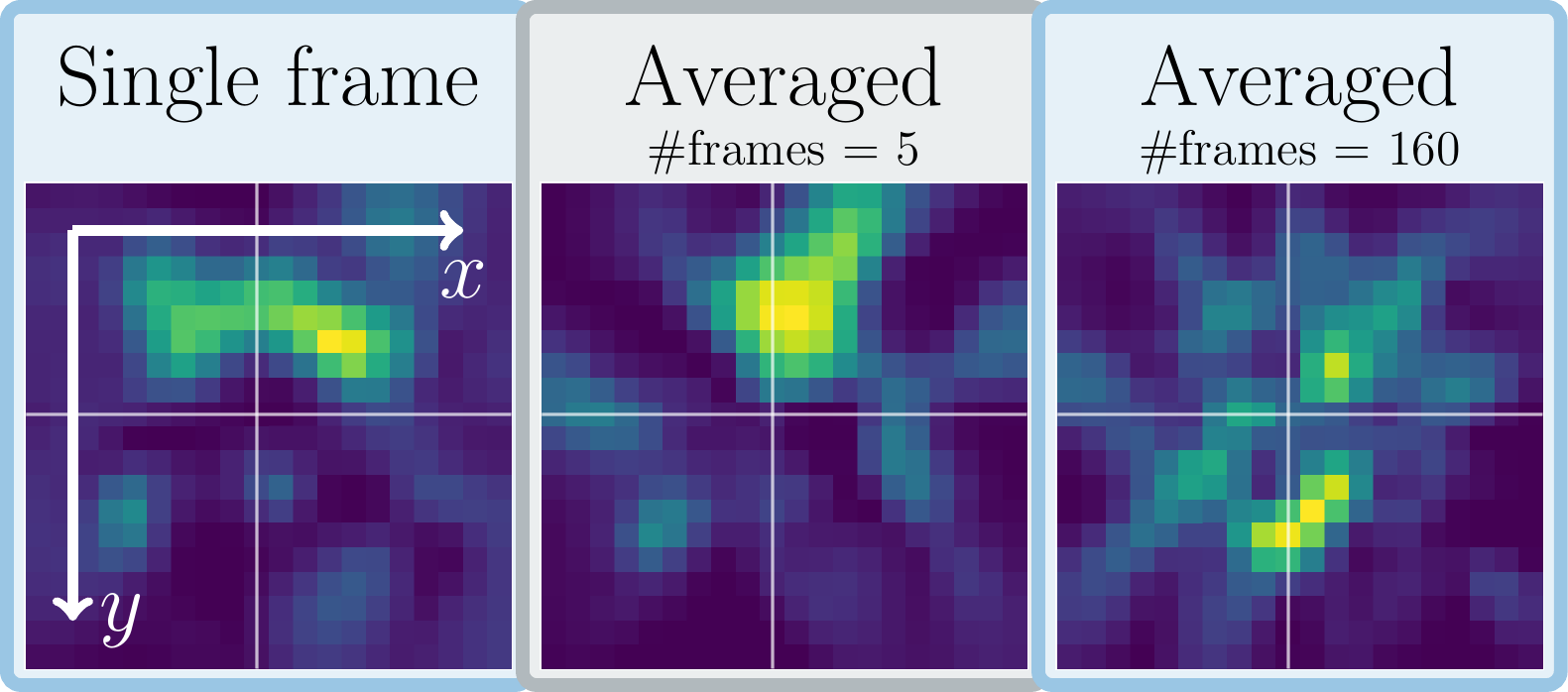}
    \centering\includegraphics[width=0.073\textwidth]{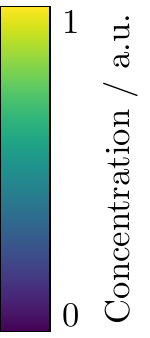}
    \caption{Averaged intensity projections of the reconstructed images of the rotation phantom for noise level with standard deviation 50$\%$ in the time domain. Reconstructions are obtained via Kaczmarz method using either single frame measurements as well as averaged measurements over multiple frames. The best regularization parameter $\lambda_{\mathrm{single}}$ weighting the Tikhonov regularization term was determined for the single frame reconstruction and then adjusted to the amount of averaging (using $\lambda_{\mathrm{single}} / \sqrt{\# \mathrm{frames}}$)  in order to stay proportional to the expected noise level. The amount of frame averaging is increased from left to right column.}
    \label{fig:sim_av_artifacts}
\end{figure}

\subsection{Experiments on a 3D optical flow like data set}
\label{sec:NumrotationPhantom}
The data used in this section was first published in \cite{Gdaniec2017}.
The measurements were taken by a pre-clinical MPI scanner (Bruker Biospin, Ettlingen). The scanner uses sinusoidal excitation frequencies and thus creates 3D Lissajous-type trajectories. The drive-field frequencies are given by $f_x = \qty{2.5}{\mega\hertz}/102$, $f_y = \qty{2.5}{\mega\hertz}/96$ and $f_z = \qty{2.5}{\mega\hertz}/99$ with an amplitude of \qty{14}{\milli\tesla\per \mu_0}, resulting in a repetition time of \qty{21.54}{\milli\second}. Frames are not averaged such that the temporal resolution is defined by the repetition time. The selection field gradient strength is \qty{0.75}{\tesla\per\meter\per\mu_0} in $x$- and $y$-direction and \qty{1.5}{\tesla\per\meter\per\mu_0} in $z$-direction. The delta sample used for system matrix acquisition was of size \numproduct{2x2x1} \unit{\cubic\milli\meter} and covers \numproduct{25x25x25} positions.

The phantom consists of a rotating class capillary and is constructed in the following way: Two round disks of the same diameter as the scanning bore are connected by three rods. An additional rod is placed in the center and a glass capillary filled with tracer material (\qty{20}{\micro\liter} diluted ferucarbotran with factor $1/10$) is attached to it. The central rod can be attached to a screwdriver and rotated during measurements. The central rod has a diameter of \qty{10}{\milli\meter} and the attached glass capillary has an inner diameter of \qty{1.3}{\milli\meter} and an outer diameter of \qty{1.7}{\milli\meter}. Considering the amount of tracer material, this yields a fill height of approximately \qty{15}{\milli\meter} in $x$ direction (corresponding to 7.5 voxel). The construction results in a circular path of the tracer material with a diameter of approximately \qty{11.5}{\milli\meter}. Image sequences for three different rotation speeds were obtained, the respective motion frequencies are \qty{1}{\hertz}, \qty{3}{\hertz} or \qty{7}{\hertz}. A more detailed description of the phantom as well as illustrations can be found in \cite{Gdaniec2017}.

We use a standard procedure for data pre-processing, applying a frequency selection based on an SNR threshold of 5, and cutting off frequencies lower than \qty{80}{\kilo\hertz} (see e.g., \cite{KluthJin2019a} for a formal description of the pre-processing). 
Moreover, we use a weighting approach by row normalization as introduced in \cite{knopp2010weighted}.

We compare the visual quality of our reconstructed image sequences to a sequence reconstructed by standard frame-wisely applied Kaczmarz algorithm using an $L^2$-Tikhonov regularization as well as to frame-wisely applied SPDHG algorithm for fused lasso regularization. 
Full image sequences are appended to the manuscript.
Since the phantom fulfills the optical flow constraint and thus both possible constraints, we will use it to compare both mass conservation and optical flow constraint. 
\begin{figure}[tbhp]
    \centering
    \noindent\includegraphics[width=0.92\textwidth]{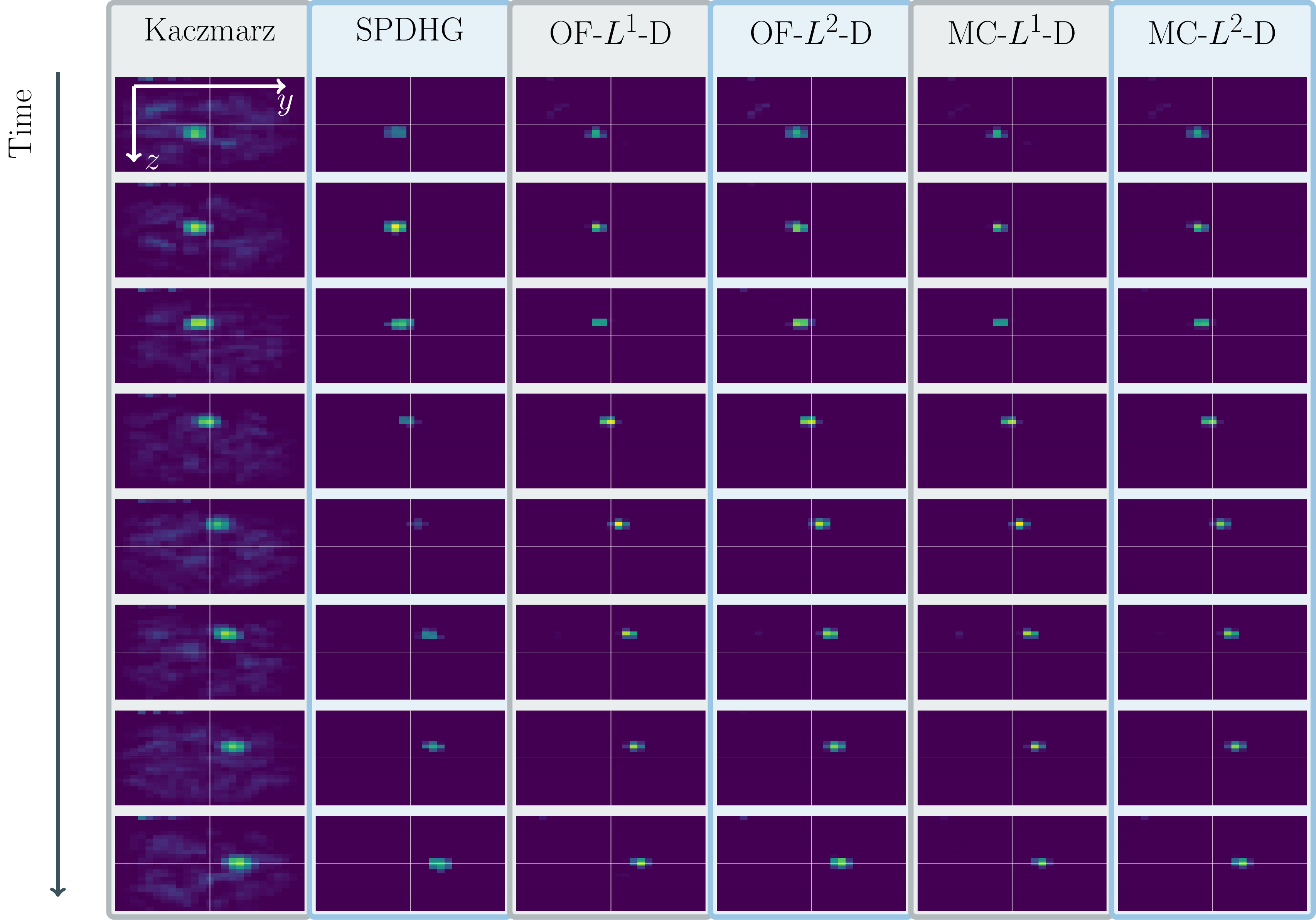}
     \noindent\includegraphics[width=0.0635\textwidth]{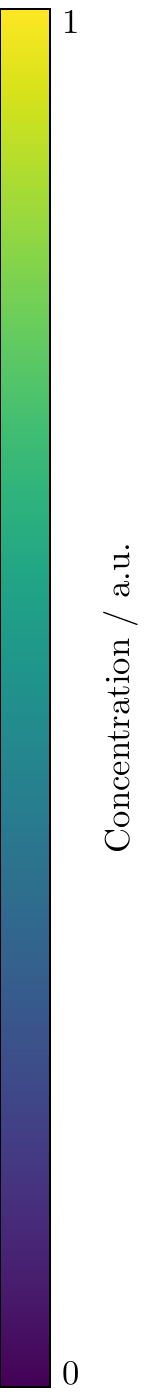}
    \caption{Reconstructed image sequences for the \qty{3}{\hertz} rotation dataset. Each column corresponds to one algorithm, each row represents one frame in time. We display averaged intensity projections onto the $y$-$z$-plane in this Figure. }
    \label{fig:reconstruction_3hz}
\end{figure}
\begin{figure}[tbhp]
    \centering
    \begin{minipage}{0.45\textwidth}
    \centering
    \includegraphics[width=0.9\textwidth]{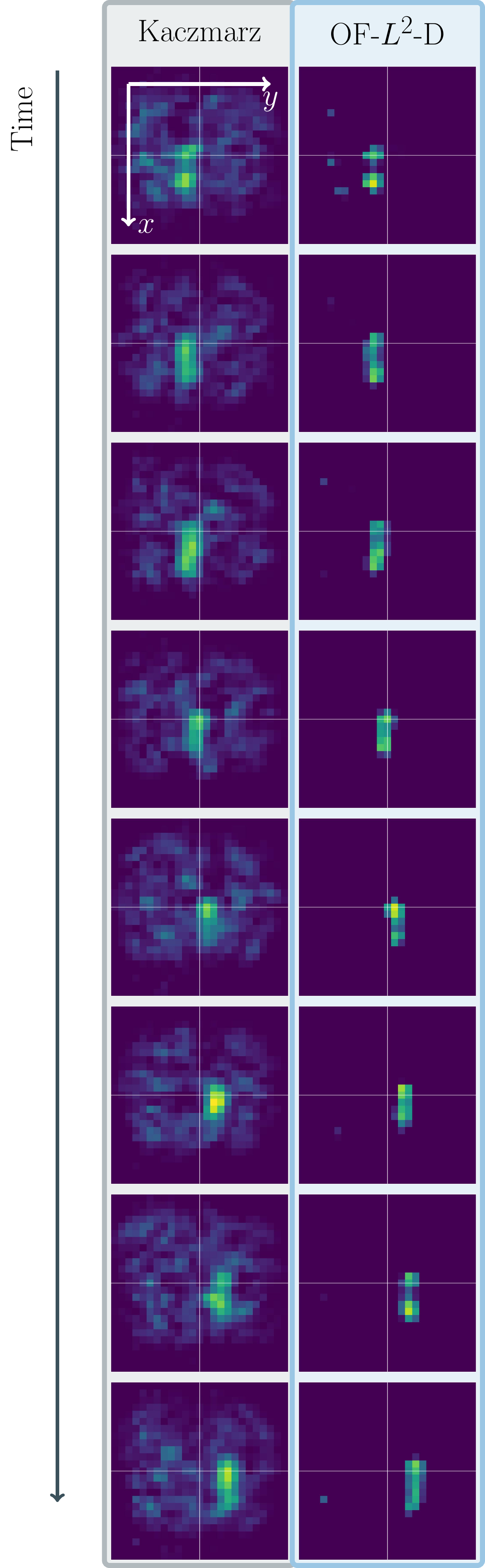} 
    \end{minipage}
      \begin{minipage}{0.45\textwidth}
      \centering
    \includegraphics[width=0.9\textwidth]{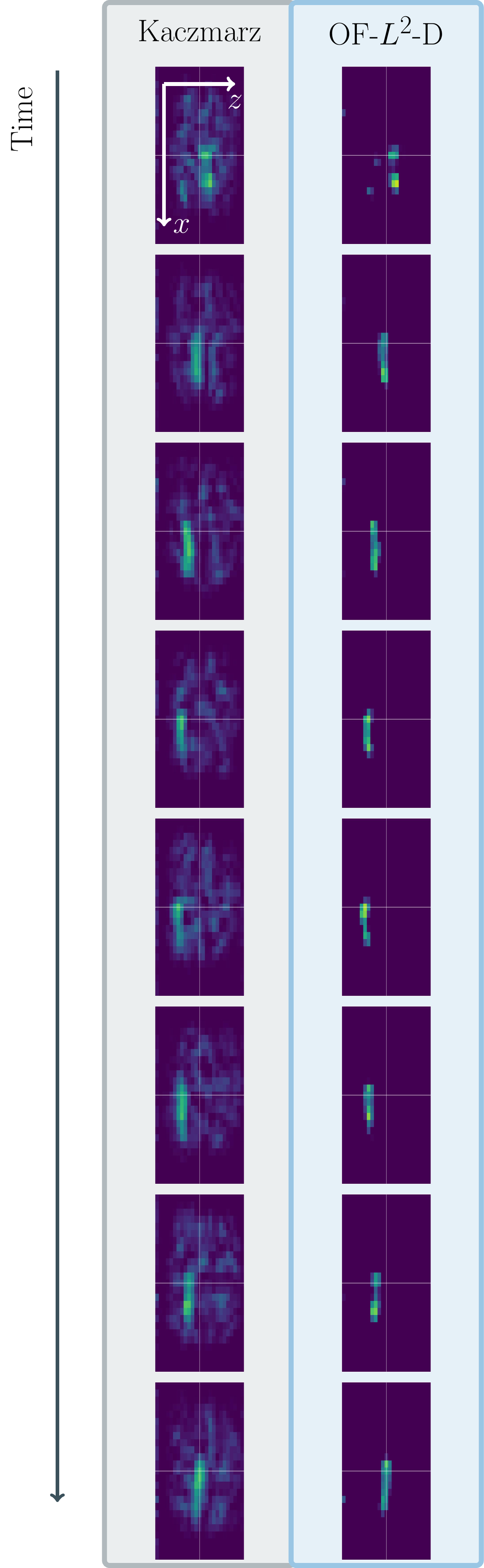} 
    \end{minipage}
    \begin{minipage}{0.08\textwidth}
    \vspace{1.3cm}\includegraphics[width=1.\textwidth]{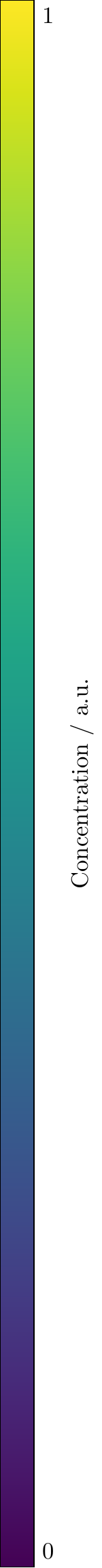}\\
    \end{minipage}
    \caption{Reconstructed image sequence of the \qty{3}{\hertz} rotation phantom for Kaczmarz method (applied frame-by-frame) and the joint OF-$L^2$-D algorithm. This figure displays averaged intensity projections onto the $y$-$x$- and $z$-$x$-planes for the first eight timeframes (each row corresponds to one step in time).}
    \label{fig:bohrerdaten_3hz_yx_zx}
\end{figure}

\Cref{fig:reconstruction_3hz} shows that our method yields significantly less noisy reconstructed images than the Kaczmarz method. Moreover, the dot shaped phantom is less blurry, although still at the correct position for each frame. The qualitative differences between $L^1$-D and $L^2$-D term are negligible which might be due to the chosen weighting in the data pre-processing. It can be observed that the reconstructed rod by an $L^1$-D term are slightly better resolved and the shape is closer to the actual phantom size, which covers less than one voxel in the $yz$-plane. Frame-wise SPDHG algorithm yields less noise artifacts than the Kaczmarz method, but has severe problems in achieving comparable mass in each frame. All reconstruction parameters are given in \cref{tab:bohrerdaten_3hz_parameter}. They were chosen based on visual inspection out of a wide range of tested parameters, for details on the parameter tests see \cref{app:par_search}. The joint algorithm proposed here is used in combination with early stopping in order to limit the computation time, however, the reconstructed image sequences are close to convergence at that point. More precisely, the early stopping iteration is given for the image reconstruction subtask in the inner loop of the alternating algorithm. The alternations are performed 3 times. \Cref{fig:bohrerdaten_3hz_yx_zx} displays reconstructed image sequences in the other two planes. The proposed method yields reconstructed images, in which the phantom stays constant in size and has a reasonable shape (length of 7 voxels times width of 2 voxels). For the Kaczmarz method reconstructions the phantom seems to vary in size. In time frames 4 to 6, it appears significantly shorter than in other frames. Moreover, it is often difficult to distinguish between phantom and background noise. 

\begin{table}[tbhp]
\footnotesize
    \caption{The parameters used for the reconstruction of the \qty{3}{\hertz} rotation phantom data. The early stopping index indicates at which iteration the algorithm was terminated. The parameter $\lambda$ denotes the weight on the Tikhonov $L^2$-penalty term used by Kaczmarz method. As defined in \cref{eq:regularization_imagereco}, $\alpha_1$ denotes the weight on the $L^1$-penalty on the concentration, $\alpha_2$ defines the weight on the TV-penalty term on the concentration. $\beta$ denotes the weight on the TV-penalty on the flow and $\gamma$ weights the motion model penalty term (see \cref{eq:motion_est}).}
    \label{tab:bohrerdaten_3hz_parameter}
    \centering
    \begin{tabular}{|l|l|c|c|c|c|c|c|}
    \hline
       Algorithm  & Motion model & Early stopping & $\lambda$ & $\alpha_1$ & $\alpha_2$ & $\beta$ & $\gamma$ \\
       \hline
       Kaczmarz  & None & $k=10\hphantom{^2}$ & 5.62 &  &  &  &  \\
       SPDHG & None & $k=10^3$ & & $5.0\cdot10^{-1}$& $3.0\cdot 10^{-1}$& & \\
       Joint, $L^1$-D & OF & $k=10^2$ & & $6.0\cdot10^{-1}$ & $1.0\cdot 10^{-1}$ & $10^{-1}$ & 100  \\
       Joint, $L^2$-D & OF & $k=10^2$ & & $2.5\cdot10^{-1}$ & $1.0\cdot 10^{+2}$ & $10^{-1}$& 100  \\
       Joint, $L^1$-D & MC & $k=10^2$ & & $7.0\cdot10^{-1}$ & $5.0\cdot 10^{-1}$ & $10^{-1}$& 100  \\
       Joint, $L^2$-D & MC & $k=10^2$ & & $2.5\cdot10^{-1}$ & $2.5\cdot 10^{-1}$ & $10^{-1}$& 100  \\
       \hline
    \end{tabular}
\end{table}

\begin{table}[tbhp]
\footnotesize
    \caption{The coefficient of variation of the total reconstructed mass in each frame over the first 30 frames of the sequence for the \qty{3}{\hertz} dataset. }
    \label{tab:stddev_bohrerdaten}
    \centering
    \begin{tabular}{|l|c|c|c|c|c|c|}
        \hline
        Algorithm & Kaczmarz & SPDHG& OF-$L^1$-D & OF-$L^2$-D & MC-$L^1$-D & MC-$L^2$-D \\
        \hline
Coefficient of variation & 0.149 &0.556& 0.116 & 0.141 & 0.110 & 0.129 \\
        \hline
    \end{tabular}
\end{table}
In order to underline the improvements by the optical flow or mass preservation constraint, we list the coefficient of variation, i.e. standard deviation divided by mean value, of the reconstructed mass in each frame over the first 30 frames for each reconstruction approach in \cref{tab:stddev_bohrerdaten}. All joint reconstruction approaches outperform the Kaczmarz method based on this measure. The value of frame-wise SPDHG is in completely different ranges compared to the other algorithms, which confirms the visual impression. 

In order to investigate the impact of our approach to the motion estimation, we compare the motion fields obtained during the joint reconstruction task to motion fields obtained from the sequences reconstructed by Kaczmarz method. In \cref{fig:bohrerdaten_motion_3hz} we compare motion fields for the \qty{3}{\hertz} dataset by comparing the estimated particle trajectories obtained from the different motion estimates. The optimal particle trajectories (left column, depicted in white) are obtained step by step from the reconstructed images and are almost identical for both methods. For guidance, we fitted a circular path with the correct diameter of approximately \qty{11.5}{\milli\meter} as approximate ground truth to the plot (depicted in green). However, the motion field obtained from the Kaczmarz reconstructions highly underestimates the magnitude of motion, probably due to the severe noise artifacts in the image sequence. The piecewise linear as well as the smoothed approximation of the trajectory computed with the help of the motion field is too narrow in case of the Kaczmarz reconstructed image sequence. The motion estimates obtained by the joint MC-$L^2$-D algorithm however is close to the optimal trajectory. 
\begin{figure}[tbhp]
    \centering
    \subfloat[Particle trajectories for frames 3 to 15 of the \qty{3}{\hertz} dataset.]{\label{fig:bohrerdaten_motion_3hz}
    \includegraphics[width=0.45\textwidth]{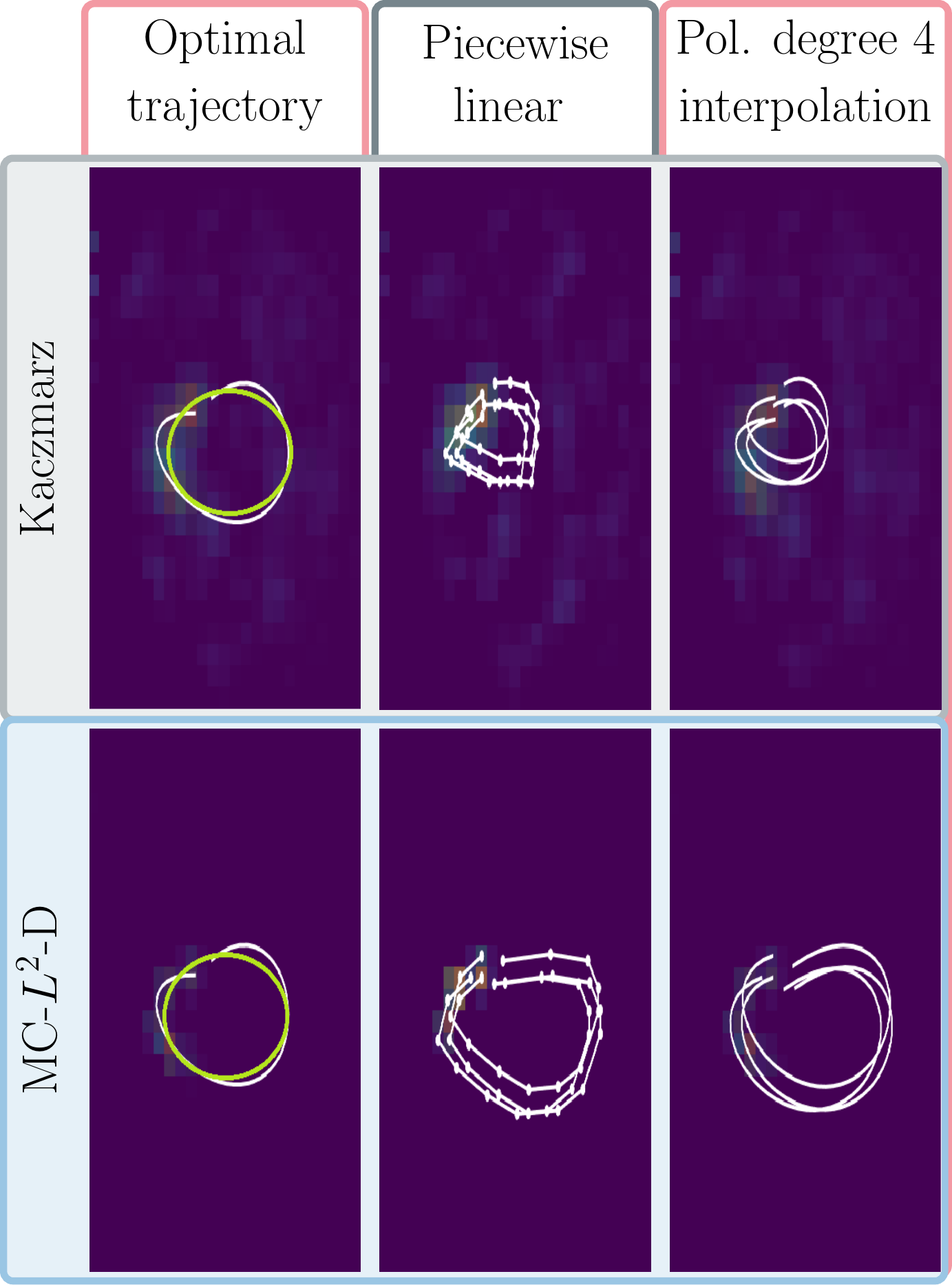}}
    \subfloat[Particle trajectories for frames 3 to 10 of the \qty{7}{\hertz} dataset.]{ \label{fig:bohrerdaten_motion_7hz}   \includegraphics[width=0.45\textwidth]{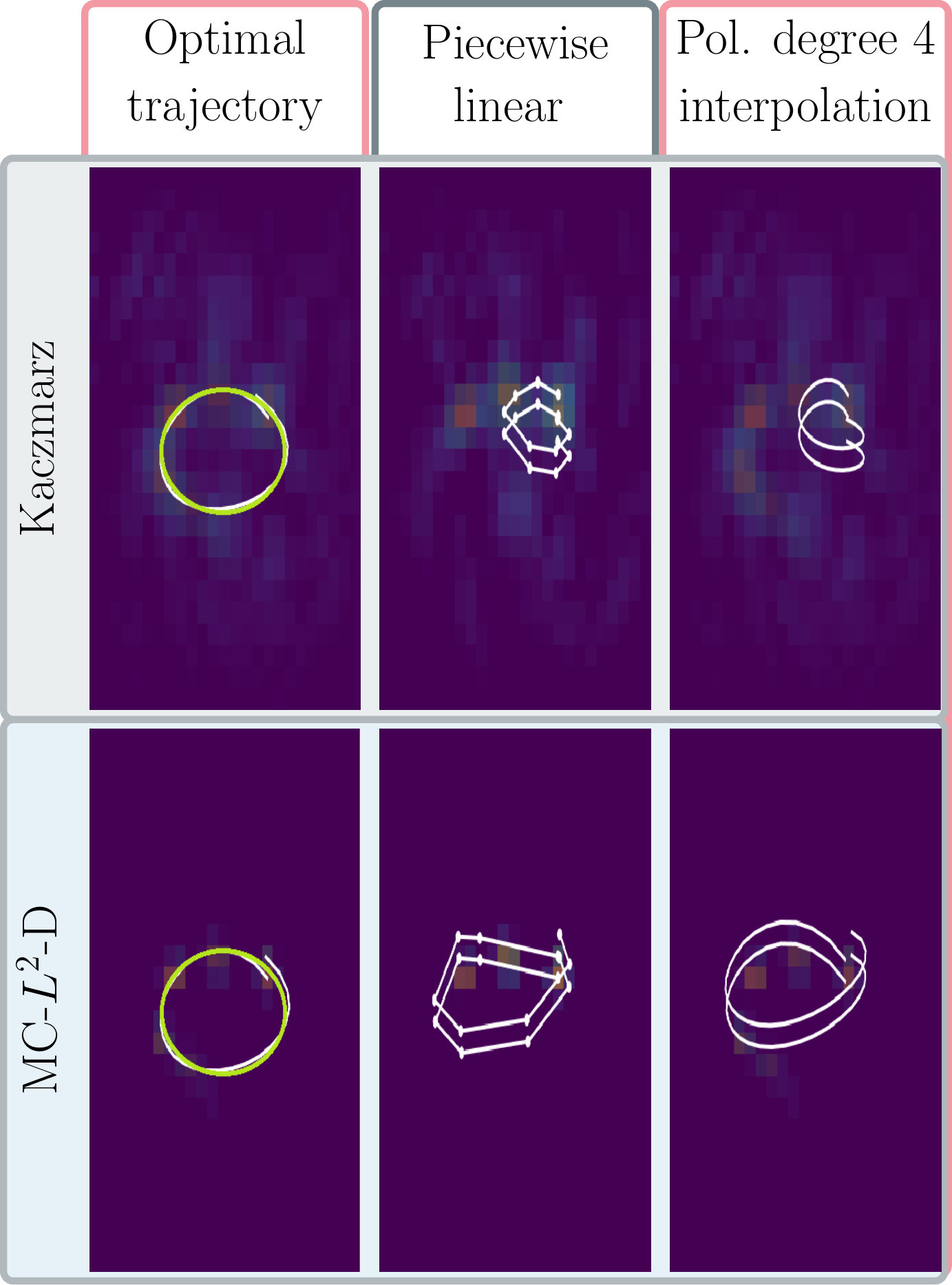}}
    \caption{The left column displays the particles trajectory as observed in the reconstruction and smoothed by a 4th order polynomial approximation (in white) as well as a circle with correct diameter fitted to the curve (in green). The middle column shows the piecewise linear trajectories, which are obtained by adding up the computed motion at each time step. 
    The right column shows a 4th order polynomial approximation to the curve in the middle.}
\end{figure}

\Cref{fig:bohrerdaten_motion_7hz} shows similar results for the \qty{7}{\hertz} dataset. The more challenging image reconstruction task leads to severe problems in the motion estimation task based on the images reconstructed by Kaczmarz method. Our method again yields trajectory estimates that are close to the best possible guess based on the image sequences. 

Reconstruction of the rotation phantom datasets for different speeds shows that the image reconstruction task benefits from the joint approach. The more appropriate priors (in comparison to the Tikhonov regularization used by Kaczmarz method) minimize noise artifacts in the reconstruction. Moreover, the reconstructed mass over the time frames is more constant due to the additional mass conservation prior. Moreover, the motion estimates are also improved by the joint approach. The enhanced image sequences lead to more accurate motion estimates. Video sequences of reconstructed images for the \qty{3}{\hertz} and \qty{7}{\hertz} datasets reconstructed by frame-by-frame Kaczmarz method as well as a joint approach are available as additional material. 

\subsection{Experiments on a 3D in-vivo blood flow dataset}
\label{sec:NumMousedata}
The in-vivo mouse data were first published in \cite{graeser2017towards} in the context of presenting a highly sensitive gradiometric receive coil. 
The system matrix has been calibrated with a small capillary of size \qty{0.7}{\milli\meter} in order to capture the fine structures of the cardiovascular system. This yields a system matrix of size $46\times36\times19$ voxels capturing a volume of size \numproduct{32.2x25.2x13.3} \unit{\cubic\milli\meter}. The measurement sequence 
is collected by the above-mentioned gradiometric receive coil for an injected bolus of volume \qty{10}{\micro\liter}. 
No time averaging was applied resulting in a repetition time of \qty{21.54}{\milli\second} like for the previous phantom experiment.
We use the same data-prepocessing as for the rotation phantom data with an additional correction for background signal, where dedicated empty measurements are subtracted from measurements and system matrix \cite{them2015sensitivity}. 
Anatomic background information on the inner tissue structure is available from MRI scans and are combined with the MPI reconstructions as grayscale background images in order to interpret the resulting images. Exemplary images of the particle inflow into the cardiovascular system of the mouse heart via the vena cava inferior are displayed in \cref{fig:reconstruction_mouse_mri}. 

\begin{figure}[tbhp]
    \begin{minipage}{0.95\textwidth}
    \centering
    \noindent\includegraphics[width=\textwidth]{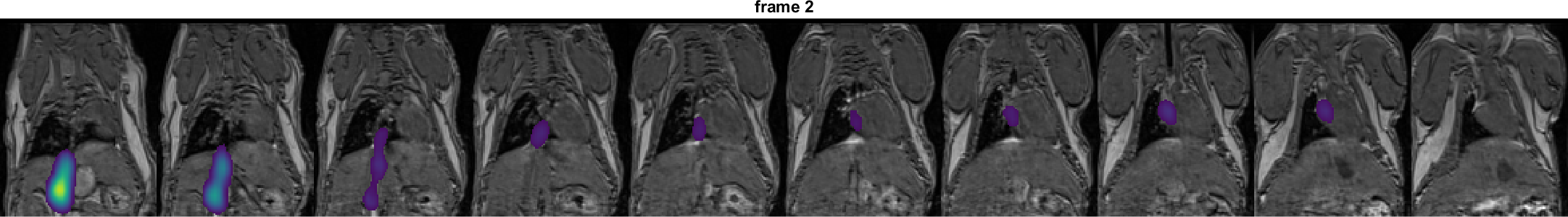}
    \includegraphics[width=\textwidth]{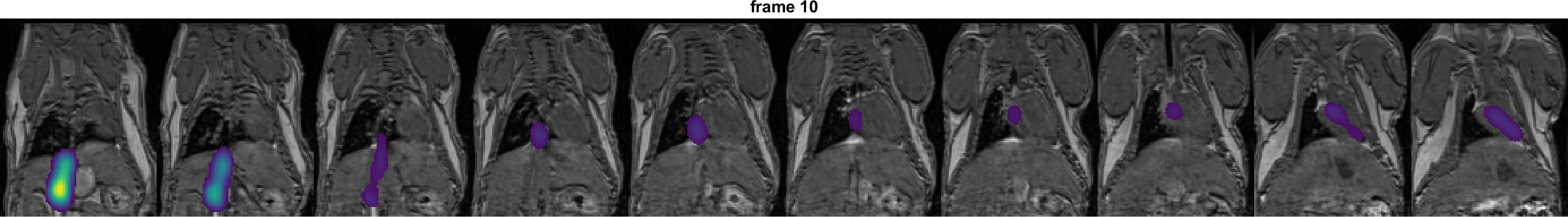}
    \end{minipage}
    \begin{minipage}{0.04\textwidth}
    \vspace{0.135cm}
        \includegraphics[width=0.94\textwidth]{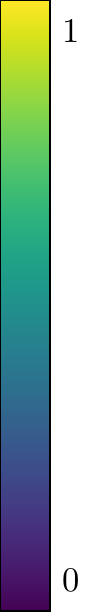}
    \end{minipage}
    \caption{Coronal slices of reconstructed concentration obtained from the in-vivo mouse dataset using the MC-$L^2$-D method with an $L^2$ penalty term for $v$. Each column corresponds to one slice of tissue, MRI data are integrated for background information. Two different time steps are depicted in the different rows. Concentrations larger than 0.4\,\% of the maximum concentration are illustrated.}
\label{fig:reconstruction_mouse_mri}
\end{figure}

A full video sequence showing the reconstructed images for the MC-$L^1$-D algorithm with the parameters $\alpha_1= 0.05$, $\alpha_2 =100$, $\beta = 0.1$ and $\gamma = 100$ is appended to this manuscript. In the absence of a ground truth reference we now focus on the reconstructed motion fields. This is particularly useful, as MPI has potential applications in cardiovascular imaging \cite{kaul2018magnetic}.
As the blood flow through the cardiovascular system is not as homogeneous in space as for the previous phantoms, we do not use regularization on the gradient of the flow. Instead we use standard $L^2$ Tikhonov regularization on the flow fields. The motion estimation subproblem thus reads
\begin{equation*}
    \min_v \int_0^T \beta \left\|v\left(\cdot, t\right) \right\|_{L^2(\Omega)^n}^2 + \gamma \left\| \dd{t}c\left(\cdot, t \right) + \nabla \cdot \left( c\left(\cdot, t \right)\cdot v\left(\cdot, t \right)\right) \right\|_{L^1\left( \Omega\right) }  \mathrm{d}t.
\end{equation*}
The dual update step in \cref{alg:PDHG} simplifies as the Tikhonov term belongs to the primal problem. The primal update step consists of a different but easy to compute proximal operator. Parts of the reconstructed image sequence as well as the estimated motion field in 2D slices for a later time point after the particle inflow are depicted in \cref{fig:reconstruction_mouse}. The flow is illustrated by two different ways. First, we have a quiver plot where the direction and strength of the flow at a certain position is indicated by an arrow and its direction and length. The strength of the flow is clearly visible in this plot. However, direction is difficult to observe, which is why we add a color wheel plot, where each direction is linked to a certain color. Additionally, the flow in the third spatial direction, i.e. orthogonal to the slices depicted in \cref{fig:reconstruction_mouse} is illustrated in \cref{fig:reconstruction_mouse_z}.
In general, we observe flow estimates, which are plausible with respect to the underlying structure provided by the MRI image and illustrate the blood flow through the cardiovascular system. Such flow information is already used in other imaging modalities like 4D-flow magnetic resonance imaging \cite{strater20184d} and thus would be very interesting for future clinical real-world applications of MPI.


\begin{figure}[tbhp]
    \centering
    \noindent\includegraphics[height=\textheight]{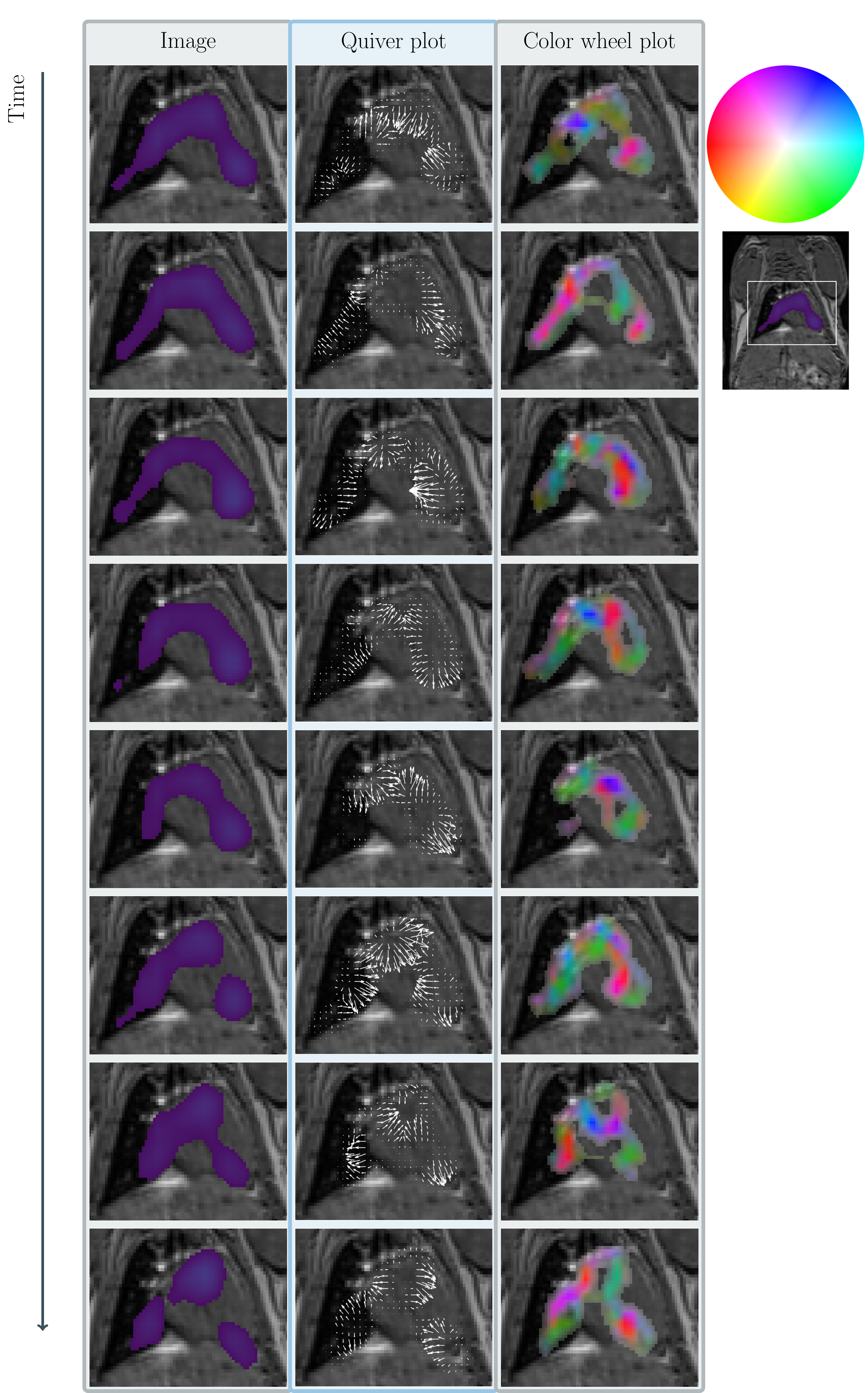}
    \caption{Reconstructed image sequences for the in-vivo flow dataset using the MC-$L^2$-D method with an $L^2$- penalty term for $v$. For illustrative purposes we depict a slice corresponding to height $[9.1,9.8]$\unit{\milli\meter} (index 14) of the 3D concentration and the 3D+time flow over eight subsequent frames. The left column depicts the reconstructed concentration in the snippet indicated in the most right column (same colormap as before). Concentrations larger than 0.4\,\% of the maximum concentration are illustrated. The second left plot indicates the flow fields by a quiver plot, whereas the second right column depicts the flow field by a color wheel plot using the color wheel in the right in order to illustrate the direction of the flow.  }
    \label{fig:reconstruction_mouse}
\end{figure}

\begin{figure}[tbhp]
    \centering
    \noindent\includegraphics[width=\textwidth]{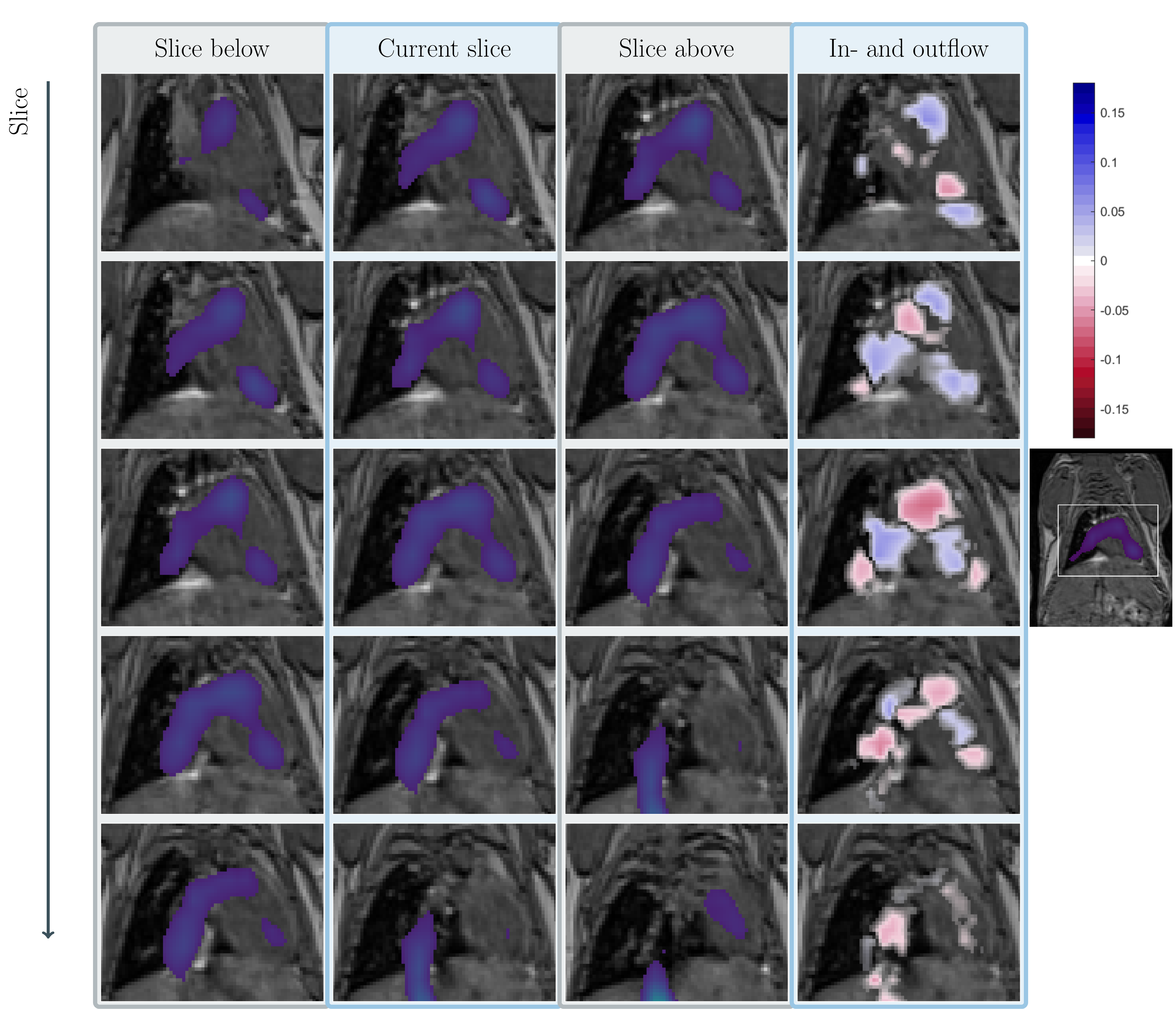}
    \caption{Reconstructed image at one time step for the in-vivo flow dataset using the MC-$L^2$-D method with an $L^2$ penalty term for $v$. For illustrative purposes we depict three consecutive slices of the 3D concentration in the first three columns (same colorbar as before) and the 3D+time flow leaving the current slice in the most right column. Concentrations larger than 0.6\,\% of the maximum concentration are illustrated. The colorbar in the right identifies the direction of the in- and outflow into the current slice, where a positive flow means into the slice above and a negative flow means going into the direction of the slice below.}
    \label{fig:reconstruction_mouse_z}
\end{figure}

\section{Conclusion and outlook}
\label{sec:conclusion}
In the present work we consider a joint image reconstruction and motion estimation approach in the dynamic inverse problem setting of MPI where a time-dependent sequence of 3D concentration images is reconstructed. 
The joint image and flow parameter estimation is based on \cite{Burger.2018} taking into account an optical flow or a mass preservation constraint for 2D image processing tasks and in the present work it is particularly extended to the 3D+time case in MPI investigating the existence of a solution to the minimization problem in this setting. This particularly includes a theoretical discussion of the required properties in the MPI equilibrium model. In addition an efficient solution to the problem is developed using alternating minimization with primal-dual algorithms in both subproblems. The image reconstruction subproblem is solved by stochastic PDHG exploiting the structure of the forward operator in MPI. The algorithm solving the motion estimation subproblem combines PDHG algorithm with a multi-scale scheme and warping. 
The theoretical part is accompanied by an extensive numerical investigation in the context of MPI based on simulated data showing the advantages of the proposed joint approach compared to standard MPI reconstruction algorithms. In addition the proposed method is evaluated on measured 3D+time data including rotation phantoms as well as in-vivo data from a mouse. Here, not only the improved reconstruction quality in the joint approach is discussed but also the benefits of having access to the jointly estimated motion parameters are illustrated.  

In summary, the present manuscript provides a comprehensive investigation of the dynamic reconstruction problem in MPI particularly taking into account motion priors and it thus paves the way for various directions of future work.
On the theoretical as well as numerical side, future works include, for example, an extension allowing for the consideration of an in- and outflow into the field-of-view being included in  the mass preservation or optical flow constraint or inclusion of prior knowledge about e.g. edges obtained from MRI images. In the field of application, MPI, future works include further qualitative as well as quantitative evaluations with respect to image reconstruction quality as well accuracy in flow estimation. This requires a careful experimental design to obtain access to ground truth information not only of the phantom but also of the flow field. In the present work, the flow is solely included in the motion model with respect to the concentration but recent studies \cite{MoeddelSchloemerkemperKluth2023IWMPI} indicate that there also might be an explicit influence on the obtained measurement. In this case the obtained measurement have an immediate dependence on the flow field and it thus needs to be taken into account explicitly as an argument of the forward operator $A$, i.e., consider $A(c,v,t)$ in the general problem setting. Suitable models for the operator $A$ in the context of MPI and the general theoretical investigation of this approach remain future work.




\bibliographystyle{abbrv}
\bibliography{literature,mypub}


\appendix
\section{Proof of \cref{thm:existence}}
\label{app:proofExistence}
\begin{proof}
	We follow the proof of \cite{Burger.2018, Dirks.2015},  but we have three main differences in our assumptions. First, our image sequence is in 3+t dimensions; second, our regularizers are of a more general form and third, our linear operator is time-dependent. We follow the technique of the direct method in the calculus of variations. The line of argumentation is as follows: 
	\begin{enumerate}
        \item The functional $J: L^{\hat{p}}\left( 0,T; BV\left( \Omega\right) \right) \times L^q\left( 0,T; BV\left( \Omega\right)^n \right) \rightarrow \R$ is bounded from below such that $\inf J\left( c,v\right) > -\infty$. 
        	This implies existence of a minimizing sequence of admissible $(c_m, v_m)\in L^{\hat{p}}\left( 0,T; BV\left( \Omega\right) \right) \times L^q\left( 0,T; BV\left( \Omega\right)^n \right), m\in  \N,$ (here, admissible means $c_m\in L^p(0,T;BV(\Omega))$, $\left\| v_m\right\|_{L^\infty(0,T;L^\infty(\Omega)^n)} \leq k_\infty$, $\left\Vert \nabla\cdot v_m\right\Vert _{\theta}\leq k_{\theta}$ , $ m_i\left(c_m,v_m\right)=0 \text{ in } \mathcal{D}'(\Omega \times [0,T])$), such that $\lim_{m\rightarrow \infty} J \left(c_m,v_m\right) = \inf J\left( c,v\right)$. 
		\item The sublevel sets of $J$ are weak-$^*$ compact. As $J$ is continuous, the sublevel sets are closed. It remains to show for this step in the argumentation that the sublevel sets are bounded in norm. This then implies existence of a subsequence $\left(c_{m_k},v_{m_k}\right)\overset{* \phantom{a}}{\rightharpoonup} \left(c^*,v^*\right)$, which then identifies the candidate $\left(c^*,v^*\right)$ for the minimizer. 

		\item The functional $J$ is lower semi-continuous with respect to the weak-$^*$ topology (as a sum of such functionals), which yields 
        \begin{equation*}
             \inf J\left( c,v\right) \leq J\left(c^*,v^*\right) \leq \liminf_{m \rightarrow \infty} J\left(c_{m_k},v_{m_k}\right) = \inf J\left( c,v\right).
        \end{equation*}
		\item In order to obtain the weak-$^*$ closedness of the admissible set (in the weak-$^*$ topology), we prove convergence of the constraint in a distributional sense, i.e., we verify that $\left(c^*,v^*\right)$ is admissible. 
		We consider both optical flow and mass conservation constraint and complete the proofs in 3D.		
	\end{enumerate}

\underline{\textit{Weak-$^*$ compactness of sublevel sets:}}
Denote the sublevel set to $\nu \in \mathbb{R}$ by 
	\begin{equation*}
		S_{\nu}:= \left\lbrace \left( c,v\right) \in L^{\hat{p}}\left( 0,T; BV\left( \Omega\right) \right) \times L^q\left( 0,T; BV\left( \Omega\right)^n \right) : \quad J\left( c,v\right) \leq \nu, \|v\|_{L^\infty\left(0,T;L^\infty\left(\Omega\right)^n\right)} \leq k_\infty \right\rbrace. 
	\end{equation*}
We will use that $S_{\nu}\subset  L^{\hat{p}}\left( 0,T; BV\left( \Omega\right) \right) \times L^q\left( 0,T; BV\left( \Omega\right)^n \right) \cong  X^*$ for $X^*$ being the dual space of a Banach space $X$. Therefore $S_{\nu}$ is weak-$^*$ compact if and only if $S_{\nu}$ is closed in weak-$^*$ topology (this holds as J is continuous) and the sublevel set is bounded in norm. \\
It remains to show that $\left\| c\right\| _{L^{\hat{p}}\left( 0,T; BV\left( \Omega\right) \right) } = \left(\displaystyle \int_0^T \left( \left\| c\right\|_{L^1(\Omega)} + \left| c\right|_{\mathrm{BV}(\Omega)}\right)^{\hat{p}}  \mathrm{d}t\right) ^{1/\hat{p}}  \leq k$ and \\ $\left\| v\right\| _{L^q\left( 0,T; BV\left( \Omega\right)^n \right) } \leq k$ for a $k < \infty$. \\
Let $\left( c,v\right) \in S_{\nu}$ (i.e, it holds additionally  $\left\| v\right\|_{L^\infty(0,T;L^\infty(\Omega)^n)} \leq k_\infty$). \\	
We start with the bound on the norm of $c$:
	 From $\left( c,v\right) \in S_{\nu}$ it follows that 
	\begin{eqnarray*}
		\int_0^T \frac{1}{2}\left\| \left(A_tc\right)\left( t\right) - u\left(t\right)\right\|^2_{Y } \mathrm{d}t = \frac{1}{2}\left\| A_t c- u\right\| ^2_{Y }  \leq \nu  \\
		\Rightarrow \left( A_tc -u\right) \in Y \quad \mathrm{a.\enspace e. \enspace in } \enspace \left[ 0,T\right]. 
	\end{eqnarray*}
	Define $k_A\left( t\right) := \left\| \left(A_tc\right)\left(  t \right)-u\left( t \right)\right\| _{Y}$, then 
	\begin{equation*}
		\int_0^T k_A^{\hat{p}} \mathrm{d}t = 	\int_0^T \left\| A_tc-u\right\| _{Y}^{\hat{p}} \mathrm{d}t
	\end{equation*}
	is bounded for $1 \leq \hat{p} \leq 2$ due to the continuous embedding $L^2\left( 0,T; Y\right) \hookrightarrow L^{\hat{p}}\left( 0,T; Y\right)$.\\
	To bound $\left\| c\right\|_{L^{\hat{p}}\left( 0,T;L^1(\Omega)\right) }$, we start by an upper bound for arbitrary but fixed $t \in \left[ 0,T\right]$. 
	Defining $\bar{c} = \frac{1}{\left| \Omega\right| }\int_{\Omega} c\left( x,t\right) \mathrm{d}x$, $c_0 = c\left( \cdot,t\right) -\bar{c}$ we note that
	\begin{equation*}
		\int_{\Omega}c_0 \mathrm{d}x = 0.
	\end{equation*} 
	Moreover, 
	\begin{equation*}
		\left| c_0 \right|_{\mathrm{BV}\left( \Omega\right) }  =  \left| c\left( \cdot,t\right) \right|_{\mathrm{BV}\left( \Omega\right) }  \leq k \int_0^T \left| c\left( \cdot,\tau \right) \right|^p_{\mathrm{BV}\left( \Omega\right) } \mathrm{d}\tau \leq \frac{k\nu}{\alpha},
	\end{equation*}
for a constant $k \in \mathbb{R}$. 
	With Poincaré-Wirtinger, it follows that there exists $k_1>0$ such that 
	\begin{equation}
	\label{eq:01}
		\left\| c_0\right\|_{L^{l}\left( \Omega\right) } \leq k_1\left| c \right|_{\mathrm{BV}\left( \Omega\right) }  \leq k_1 \frac{k\nu}{\alpha} =: \tilde{k}_1 
	\end{equation} 
	for $l \leq \frac{n}{n-1}$. 
	We derive the following estimate:  
	\begin{flalign*}
	\left\| A_t\bar{c}\right\|^2 _{Y} &- 2\left\| A_t\bar{c}\right\| _{Y } \left( \left\| A_t \right\| \left\| c_0\right\|_{L^l\left(\Omega\right)} + \left\| u\right\| _{Y}\right) \\
	& \leq \left\| A_t\bar{c}\right\|^2 _{Y } - 2\left\| A_t c_0 -u\right\|_{Y}\left\| A_t \bar{c}\right\|_{Y}\\
	& \leq \left\| A_tc_0 -u\right\| ^2_{Y} + \left\| A_t\bar{c}\right\|^2 _{Y } - 2\left\| A_t c_0 -u\right\|_{L^2(Y)}\left\| A_t \bar{c}\right\|_{Y}\\
	& = \left( \left\| A_t c_0-u\right\|_{Y} -\left\| A_t\bar{c}\right\| _{Y}\right) ^2 \\
	&\leq \left\| A_t c_0-u + A_t \bar{c}\right\| ^2_{Y}\\
	& = \left\| A_t\left( c_0 + \bar{c}\right) -u\right\| ^2_{Y} = k_A^2(t) 	   
	\end{flalign*}
	Using the estimate of \cref{eq:01} and $u \in L^2\left( 0,T; Y \right)$ yields  
	\begin{equation*}
		0 \leq \left\| A_t  \right\| \left\| c_0\right\| _{L^l(\Omega)} + \left\| u\right\| _{Y} \leq \left\| A_t \right\|\tilde{k}_1  + \left\| u\right\| _{Y} :=k_2\left( t\right).
	\end{equation*}
    Note that, as $\left\|A_t \right\|$ is bounded for all $t$, we also have a bound on the time integral over $k_2$. 
	We can now use the above estimate to obtain the following bound on $\left\| A_t \bar{c}\right\|_{Y}$: 
	\begin{flalign*}
		\left\| A_t \bar{c}\right\| ^2_{Y} &- 2\left\| A_t \bar{c}\right\| _{Y}\left( \left\| A_t\right\| \left\| c_0\right\| _{L^l(\Omega)} + \left\| u\right\| _{Y}\right)  \\ 
        &+ \left( \left\| A_t\right\| \left\| c_0\right\| _{L^l(\Omega)} + \left\| u\right\|_{Y}\right) ^2
		\leq k_A^2\left( t\right) + k_2^2\left( t \right)\\
		&\Leftrightarrow \left( \left\| A_t \bar{c}\right\| _{Y} - \left\| A_t\right\| \left\| c_0\right\| _{L^l(\Omega)} + \left\| u\right\| _{Y}\right) ^2 \leq k_A^2\left( t\right) + k_2^2\left( t \right)\\
		&\Rightarrow \left\| A_t \bar{c}\right\| _{Y} \leq \left( k_A^2\left( t\right) + k_2^2\left( t \right)\right) ^{1/2} + \left\| A_t\right\| \left\| c_0\right\| _{L^l(\Omega)} + \left\| u\right\| _{Y} \\
		&\leq \left( k_A^2\left( t\right) + k_2^2\left( t \right) \right) ^{1/2} + k_2\left( t \right) := k_3\left( t\right) 
	\end{flalign*}
	Finally, we deduce a bound on $\left\| c\left( \cdot,t\right) \right\| _{L^1(\Omega)}$ by
	\begin{flalign*}
		0 &\leq \left\| c\left( \cdot,t\right) \right\| _{L^1(\Omega)} \leq k_4\left\| c\left( \cdot,t\right) \right\| _{L^l(\Omega)} = k_4 \left\|c_0 + \bar{c}\right\| _{L^l(\Omega)} \\
		& \leq k_4\left( \left\| c_0\right\| _{L^l(\Omega)} + \left| \bar{c}\right|\left\| 1_\Omega\right\|_{L^l(\Omega)}  \right) 
		\leq k_4 \left( \tilde{k}_1 + \frac{k_3\left( t\right) \left\|1_\Omega \right\|_{L^l\left( \Omega \right)}}{\left\| A_t1_\Omega\right\|_Y }\right)   , 
	\end{flalign*}
    where we used that by assuming $\left\| A_t \mathds{1}_{\Omega} \right\|_Y \neq 0$ it holds that
    \begin{equation*}
    \left| \bar{c} \right| \left\| A_t \mathds{1}_\Omega \right\|_Y = \left\| A_t \bar{c} \right\|_Y \leq k_3\left( t\right)      
    \end{equation*}
    and thus 
    \begin{equation*}
        \left| \bar{c} \right| \leq \frac{k_3\left(t\right)}{\left\| A_t \mathds{1}_\Omega \right\|_Y}. 
    \end{equation*}
	We now have
	\begin{equation*}
		\int_0^T \left\| c\left( \cdot, t\right) \right\|^{\hat{p}} _{L^1(\Omega)} \mathrm{d}t \leq \int_0^T \left( k_4\left( \tilde{k}_1 + \frac{k_3\left( t\right)\left\|1_\Omega\right\|_{L^l \left(\Omega\right)}} {\left\| A_t1_\Omega\right\|_Y}\right)\right) ^{\hat{p}} \mathrm{d}t \leq : k_M < \infty \quad \mathrm{for } \quad 1 \leq \hat{p} \leq 2,
	\end{equation*}
    where $k_M$ exists as the expression is bounded for all $t$ and thus the supremum is also bounded.
	Putting it all together, we arrive with $j := \frac{p}{\hat{p}}$ at 
	\begin{flalign*}
		\left\| c\right\|^{\hat{p}}_{L^{\hat{p}}(0,T; \mathrm{BV}(\Omega))} &=  \int_0^T \left\| c\right\| ^{\hat{p}}_{\mathrm{BV}(\Omega)} \mathrm{d}t \\
		& \leq 2^{\hat{p}-1}\left( \int_0^T\left| c\right|_{\mathrm{BV}(\Omega)}^{\hat{p}} \mathrm{d}t + \int_0^T \left\| c\right\| ^{\hat{p}}_{L^1(\Omega)} \mathrm{d}t \right) \\
		& \leq 2^{\hat{p}-1} \int_0^T\left| c\right|_{\mathrm{BV}(\Omega)}^{\hat{p}} \mathrm{d}t +  2^{\hat{p}-1} k_M \\
		& \leq 2^{\hat{p}-1}\left( \int_0^T \left| \mathds{1}_{\left[ 0,T\right] }\right|^{j^*}\mathrm{d}t \right) ^{\frac{1}{j^*}}\left( \int_0^T \left| c\right|^{\hat{p}j}_{\mathrm{BV}(\Omega)} \mathrm{d}t \right) ^{\frac{1}{j}} + 2^{\hat{p}-1} k_M\quad \mathrm{with} \quad \frac{1}{j}+\frac{1}{j^*} = 1, \\
		& = 2^{\hat{p}-1} k_M + 2^{\hat{p}-1}\left\| \mathds{1}_{\left[ 0,T\right] }\right\|_{L^{j^*}(0,T)}\left( \int_0^T \left| c\right|^p_{\mathrm{BV}(\Omega)} \mathrm{d}t \right) ^{\frac{1}{j}}\\
		& \leq  2^{\hat{p}-1} \left( k_M + \left\| \mathds{1}_{\left[ 0,T\right] }\right\|_{L^{j^*}(0,T)}\left( \frac{\nu}{\alpha}\right) ^{\frac{1}{j}}\right) < \infty.
	\end{flalign*}
Consider now the norm of $v$: 
By assumption, we know that there exists $k_\infty$ such that $\left\| v\right\| _{L^\infty(\Omega)^n} \leq k_\infty$ almost everywhere on $\left[ 0,T\right] $. We proceed by the following estimate:
\begin{flalign*}
	\left\| v\right\| ^q_{L^q(0,T; \mathrm{BV}(\Omega)^n)} & = \int_0^T \left( \left\| v\right\| _{L^1(\Omega)^n} + \left| v\right| _{\mathrm{BV}(\Omega)^n}\right) ^q \mathrm{d}t \\
	& \leq 2^{q-1}\left( \int_0^T  \left\| v\right\| _{L^1(\Omega)^n}^q \mathrm{d}t+ \int_0^T \left| v\right| _{\mathrm{BV}(\Omega)^n} ^q \mathrm{d}t\right) \\
	& \leq 2^{q-1}\left( \int_0^T \left(k_\infty \left| \Omega \right| \right)^q \mathrm{d}t + \frac{\nu}{\beta}\right) \\
	& = 2^{q-1}\left( T\left(k_\infty \left| \Omega \right| \right)^q + \frac{\nu}{\beta}\right) <\infty.
\end{flalign*}
For the second inequality, we used that $v \in S_\nu$ and thus $\int_0^T \beta \left| v\right| _{\mathrm{BV}(\Omega)^n} ^q \mathrm{d}t \leq \nu$ by assumption.\\
We conclude that the sublevel sets $S_\nu$ are bounded in norm for fixed $\nu \in \mathbb{R}$, as $\left\| v\right\|_{L^q(0,T;\mathrm{BV}(\Omega)^n)}$ and $\left\| c\right\|_{L^{\hat{p}}(0,T; \mathrm{BV}(\Omega))}$ are bounded and the bounds depend on $\nu$ only. Moreover, we know that $\mathrm{BV}(\Omega)$ is isometrically isomorphic to the dual space of some Banach space $Z$ and thus 
\begin{flalign*}
	L^{\hat{p}}\left( 0,T; \mathrm{BV}(\Omega)\right) &\cong \left( L^{p^*}\left( 0,T; Z\right)\right)  ^* \quad \mathrm{ with } \quad \frac{1}{\hat{p}} + \frac{1}{p^*} = 1, \\
	L^{q}\left( 0,T; \mathrm{BV}(\Omega)^n\right) &\cong \left( L^{q^*}\left( 0,T; Z^n\right) \right) ^* \quad \mathrm{ with } \quad \frac{1}{q} + \frac{1}{q^*} = 1. 
\end{flalign*}  
Consequently, both spaces are dual spaces and thus $S_\nu \subset L^{\hat{p}}\left( 0,T; BV\left( \Omega\right) \right) \times L^q\left( 0,T; BV\left( \Omega\right)^n \right) \cong X^*$ for $X^*$ being the dual of a Banach space $X$ (here $X=L^{p^*}\left( 0,T; Z\right) \times L^{q^*}\left( 0,T; Z^n\right)$). Moreover, $S_\nu$ is closed in weak-$^*$ topology as $J$ is continuous and the sublevel sets are bounded in norm. It follows that $S_\nu$ is weak-$^*$ compact by Banach-Alaoglu. \\	
\underline{\textit{Lower semicontinuity of the functional $J$ with respect to the weak-$^*$ topology:}}
	As a sum of weak-$^*$ lower semicontinuous functionals, $J$ itself is weak-$^*$ lower semicontinuous, cf. \cite{Dirks.2015}. \\
\underline{\textit{Convergence of the constraint: Optical Flow}:}\\
Let $(c_m, v_m)\in L^{\hat{p}}\left( 0,T; BV\left( \Omega\right) \right) \times L^q\left( 0,T; BV\left( \Omega\right)^n \right), m\in  \N,$ be an admissible sequence (i.e., $c_m\in L^p(0,T,BV(\Omega))$, $\left\| v_m\right\|_{L^\infty(0,T;L^\infty(\Omega)^n)} \leq k_\infty$, $\left\Vert \nabla\cdot v_m\right\Vert _{\theta}\leq k_{\theta}$ , $ m_1\left(c,v\right)=0 \text{ in } \mathcal{D}'(\Omega \times [0,T])$), which also fulfills $(c_m,v_m) \in S_\nu$ for some $\nu \in \mathbb{R}$. Then $c_m$ and $v_m$ are bounded and it exist $c$ and $v$ such that by passing over to a subsequence (again denoted by $c_m$, $v_m$) we have 
	\begin{equation*}
		c_m \overset{* \phantom{a}}{\rightharpoonup} c, \quad v_m\overset{* \phantom{a}}{\rightharpoonup} v.
	\end{equation*}
	We want to show that 
	\begin{flalign*}
		\dd{t}c_m + \nabla c_m \cdot v_m &\longrightarrow \dd{t}c + \nabla c \cdot v \quad \text{ in } \mathcal{D}'(\Omega \times [0,T]),\\
	\end{flalign*}
	at least in a distributional sense. Therefore we need strong convergence of at least one factor of $\nabla c_m \cdot v_m$ in a certain sense. In order to use the Lemma of Aubin-Lions, we need boundedness of the time derivative of the sequence $c_m$ as well as some specific embeddings.
	We start with the bound for $\dd{t}c$: \\
	\begin{flalign*}
	\left|  \int_0^T \int_\Omega \dd{t}c \varphi \enspace\mathrm{d}x \mathrm{d}t\right| &= \left|  \int_0^T \int_\Omega c \nabla \cdot\left( v \varphi \right) \mathrm{d}x \mathrm{d}t\right|  \quad \forall \varphi \in \mathcal{C}^\infty_C\left( \Omega \times (0,T)\right) \\
    & \leq \int_0^T \left\|c \nabla \cdot\left( v \varphi \right) \right\|_{L^1\left( \Omega \right)} \dx{t}\\
	& \leq  \int_0^T \left\| c\right\|_{L^l\left(\Omega \right)} \left\| \nabla \cdot\left( v \varphi \right) \right\|_{L^{l^*}\left( \Omega \right)} \mathrm{d}t , \quad \left( \mathrm{Hoelder: }\enspace \frac{1}{l}+ \frac{1}{l^{^*}} = 1, \enspace l\leq \frac{n}{n-1}\right) \\
	& \leq \int_0^T \left\| c\right\| _{L^l(\Omega)}\left( \left\| \varphi \nabla \cdot v\right\| _{L^{l^*}(\Omega)} + \left\| v \cdot \nabla \varphi\right\| _{L^{l^*}(\Omega)}\right) \mathrm{d}t, \quad \left( \mathrm{Minkowksi}\right) \\
	& = \underbrace{\int_0^T \left\| c\right\| _{L^l(\Omega)}\left\| \varphi \nabla \cdot v\right\| _{L^{l^*}(\Omega)} \mathrm{d}t}_{\left( I\right) } + \underbrace{\int_0^T \left\| c\right\| _{L^l(\Omega)}\left\| v \cdot \nabla \varphi\right\| _{L^{l^*}(\Omega)}\mathrm{d}t}_{\left( II\right) }.
	\end{flalign*}

The first term  $\left( I\right)$ can be estimated by
\begin{flalign*}
	  \quad &\int_0^T \left\| c\right\| _{L^l(\Omega)}\left\| \varphi \nabla \cdot v\right\| _{L^{l^*}(\Omega)} \mathrm{d}t = \left\| \left\| c\right\| _{L^l(\Omega)}\left\| \varphi \nabla \cdot v\right\| _{L^{l^*}(\Omega)}\right\|_{L^1(0,T)} \\
	&\leq  \left\| c\right\| _{L^p(0,T;L^l(\Omega))} \left\| \varphi \nabla \cdot v\right\| _{L^{p^*}(0,T; L^{l^*}(\Omega))}, \quad \left( \mathrm{Hoelder: }\enspace \frac{1}{p} + \frac{1}{p^*} = 1, \enspace 1< p \leq 2\right) , \\
	&=\left\| c\right\| _{L^p(0,T;L^l(\Omega))}  \left\| \left\| \varphi \nabla \cdot v\right\|_{L^{l^*}(\Omega)}\right\|  _{L^{p^*}(0,T)}\\
	& \leq k_c \left\| \left\| \varphi\right\|_{L^{l^*k^*}(\Omega)}\left\|   \nabla \cdot v\right\|_{L^{l*k}(\Omega)}\right\|  _{L^{p^*}(0,T)}, \quad \left( \mathrm{Hoelder: }\enspace \frac{1}{l^*k} + \frac{1}{l^*k^*} = \frac{1}{l^*}, \enspace 1\leq k \leq \infty\right) , \\
	& \leq k_c \left\| \varphi\right\|_{L^{p^*s^*}(0,T;L^{l^*k^*}(\Omega))} \left\|   \nabla \cdot v\right\|_{L^{p^*s}(0,T;L^{l^*k}(\Omega))}, \quad \left( \mathrm{Hoelder: }\enspace \frac{1}{p^*s} + \frac{1}{p^*s^*} = \frac{1}{p^*}, \enspace 1\leq s <\infty \right) \\
	& \leq k_c \left\| \varphi\right\|_{L^{p^*s^*}(0,T;L^{l^*k^*}(\Omega))} k_\theta, 
\end{flalign*}
where we used that $c$ is bounded in $L^p(0,T; \mathrm{BV}(\Omega))$ by assumption and as $\mathrm{BV}(\Omega) \hookrightarrow L^l(\Omega)$ there exists a constant $k_c$ such that $\left\| c\right\| _{L^p(0,T;L^l(\Omega))} \leq k_c$.
For the second term $\left( II\right)$ we find 
\begin{flalign}
\label{eq:II545}
	  \quad & \int_0^T \left\| c\right\| _{L^l(\Omega)}\left\| v \cdot \nabla \varphi\right\| _{L^{l^*}(\Omega)}\mathrm{d}t   \\
	& \leq \int_0^T  \left\| c\right\| _{L^l(\Omega)} \left\| \left| v \right| \cdot \left| \nabla \varphi \right| \right\|_{L^{l^*}(\Omega)}\mathrm{d}t, \quad\left(  \mathrm{Cauchy-Schwarz \enspace inequality}\right) ,  \notag \\
	& \leq  \int_0^T  \left\| c\right\| _{L^l(\Omega)} \left\| v\right\| _{L^\infty(\Omega)^n} \left\| \nabla \varphi \right\|_{L^{l^*}(\Omega)^n} \mathrm{d}t  \notag \\
	& \leq  \int_0^T  \left\| c\right\| _{L^l(\Omega)} k_\infty \left\| \varphi\right\| _{W^{1,l^*}(\Omega)} \mathrm{d}t  \notag \\
	& \leq k_\infty \left\| c\right\| _{L^p(0,T; L^l(\Omega))} \left\| \varphi\right\| _{L^{p^*}(0,T;W^{1,l^*}(\Omega))}, \quad \left(\mathrm{Hoelder: }\enspace \frac{1}{p} + \frac{1}{p^*} = 1, \enspace 1<p \leq 2 \right) \notag \\
	& \leq k_\infty k_c  \left\| \varphi\right\| _{L^{p^*}(0,T;W^{1,l^*}(\Omega))}.
	\label{eq:II551} 
\end{flalign}
Combining those results for $\left( I\right)$ and $\left( II\right)$ yields 
\begin{flalign*}
	\left| 	\int_0^T \int_{\Omega} \dd{t}c \varphi \mathrm{d}x \mathrm{d}t\right| & \leq k_c k_\theta \left\| \varphi\right\|_{L^{p^*s^*}(0,T;L^{l^*k^*}(\Omega))} + k_\infty k_c  \left\| \varphi\right\| _{L^{p^*}(0,T;W^{1,l^*}(\Omega))}\\ 
	& \leq k_c\left( k_\theta \left\| \varphi\right\|_{L^{p^*s^*}(0,T;L^{l^*k^*}(\Omega))} + k_\infty k_6 \left\| \varphi\right\| _{L^{p^*s^*}(0,T;W^{1,l^*}(\Omega))}\right) .
\end{flalign*}
The last estimate was obtained by using $s^*>1$ and thus $L^{p^*s^*}(\Omega) \hookrightarrow L^{p^*}(\Omega)$, which defines the constant $k_6$ by $\left\| x \right\|_{L^{p^*}(\Omega)} \leq k_6\left\| x \right\|_{L^{p^*s^*}(\Omega)} $.   
We now use the dimension dependent embedding 
\begin{equation*}
	W^{1,l^*}\left( \Omega\right) \hookrightarrow L^{l^*k^*}\left( \Omega\right) ,
\end{equation*}
which exists for all $1<k^*<\infty$, as $l^*\geq n$ and then $l^*k^*\geq l^*$. 
\\
This yields
\begin{flalign*}
	\left| 	\int_0^T \int_{\Omega} \dd{t}c \varphi \mathrm{d}x \mathrm{d}t\right| & \leq k_c\left( k_\theta \left\| \varphi\right\|_{L^{p^*s^*}(0,T;W^{1,l^*}(\Omega))} + k_\infty k_6 \left\| \varphi\right\| _{L^{p^*s^*}(0,T;W^{1,l^*}(\Omega))}\right)\\
	& = \left\| \varphi\right\| _{L^{p^*s^*}(0,T;W^{1,l^*}(\Omega))}\left( k_c k_\theta +k_c k_\infty k_6\right) .
\end{flalign*}
We conclude that $\dd{t}c$ acts as a bounded linear functional on $L^{p^*s^*}\left( 0,T; W_0^{1,l^*}(\Omega)\right) $ and thus $\dd{t}c \in \left( L^{p^*s^*}\left( 0,T; W_0^{1,l^*}(\Omega)\right)\right) ^* =  L^{\frac{ps}{p+s-1}}\left( 0,T; W^{-1,l}(\Omega)\right)$.\\
We now use the Lemma of Aubin-Lions \cite{aubin1963theoreme, lions1969quelques}: 
\begin{remark}
	The Lemma of Aubin-Lions reads: Let $X, Y, Z$ be Banach spaces with $ X\Subset Y$, $Y\hookrightarrow Z$, let $c_i$ be a sequence of bounded functions in $L^p\left( 0,T;X\right) $ and $\dd{t}c c_i$ be bounded in $L^q\left( 0,T;Z\right) $ with either $q=1$ and $1\leq p <\infty$ or $q>1$ and $1 \leq p \leq \infty$. Then $c_i$ is relatively compact in $L^p\left( 0,T;Y\right) $. We want to identify the Banach spaces as $X = \mathrm{BV}(\Omega)$, $Y = L^r(\Omega)$ and $Z = W^{-1,l}(\Omega)$.
\end{remark}
The embedding $\mathrm{BV}(\Omega) \Subset L^r(\Omega)$ holds for $r < \frac{n}{n-1}$ \cite{ambrosio2000functions}, i.e. 
\begin{equation*}
	\left\lbrace \begin{array}{ll}
		r<2, & n=2 \\
		r< 1.5, & n=3.
	\end{array} \right. .
\end{equation*}
It remains to identify for which $r$ the continuous embedding $L^r(\Omega) \hookrightarrow W^{-1,l}(\Omega)$ exists. The embedding holds if $W^{1,l^*}(\Omega) \hookrightarrow L^{r^*}(\Omega)$ which by the Sobolev embedding theorem for the different dimensions and $l^*\geq n$ translates to 
\begin{equation}
	W^{1,l^*}(\Omega) \hookrightarrow L^{r^*}(\Omega) \enspace \mathrm{for } \enspace l^* \leq r^* < \infty.
\end{equation}
In terms of $r$, this results in $1 < r \leq l$. \\
Applying Aubin-Lions thus yields $\left\lbrace c \in L^p(0,T; \mathrm{BV}(\Omega)) : \exists k>0 \enspace \mathrm{s. t. }\enspace \left\| c\right\| _{L^p(0,T; \mathrm{BV}(\Omega))} \leq k,\right.$ $\left. \enspace \dd{t}c + v\cdot \nabla c = 0 \text{ in } \mathcal{D}'(\Omega \times [0,T]) \right\rbrace $ is relatively compact in $L^p(0,T; L^r(\Omega))$ for $r$ satisfying 
\begin{equation}
	\label{eq:boundsonr}
	\left\lbrace \begin{array}{ll}
		1 \leq r<2, & n=2 \\
		1 \leq r < \frac{3}{2}, & n=3
	\end{array} \right. .
\end{equation}
The sequence $c_m$ thus converges even strongly to $c$ in $L^p(0,T; L^r(\Omega))$.\\
We are now settled to prove convergence of the constraint. In the following, let $\varphi \in \mathcal{C}_0 ^\infty\left( \Omega \times \left[ 0,T\right] \right) $. 
We start with the time derivative: 
\begin{flalign*}
	\int_0^T \int_{\Omega} \left( \left( \dd{t}c\right) _m-\dd{t}c\right) \varphi \mathrm{d}x \mathrm{d}t = -\int_0^T \int_{\Omega} \left( c_m - c\right) \dd{t}\varphi \mathrm{d}x \mathrm{d}t, 
\end{flalign*}
by integration by parts. We know that $c_m \overset{* \phantom{a}}{\rightharpoonup} c$ in $L^p(0,T; \mathrm{BV}(\Omega))\cong \left( L^{p^*}(0,T; Z)\right) ^*$ for $\mathrm{BV}(\Omega)$ being isometrically isomorphic to the dual space of $Z$. As $\varphi \in \mathcal{C}_0 ^\infty\left( \Omega \times \left[ 0,T\right] \right) $, it follows that $\dd{t} \varphi \in L^{p^*}(0,T; Z)$ and this yields
\begin{equation*}
	-\int_0^T \int_{\Omega} \left( c_m - c\right) \dd{t}\varphi \mathrm{d}x \mathrm{d}t \longrightarrow 0 \quad \mathrm{for } \enspace m \longrightarrow \infty.
\end{equation*}
For the product term, it holds that 
\begin{align*}
    - & \int_0^T \int_{\Omega} \left( \nabla c_m \cdot v_m - \nabla c \cdot  v\right) \varphi\, \mathrm{d}x\, \mathrm{d}t 
	= \int_0^T \int_{\Omega} c_m \nabla \cdot \left( v_m \varphi\right)  - c \nabla \cdot \left( v \varphi\right) \mathrm{d}x\, \mathrm{d}t \\
	 = & \underbrace{ \int_0^T \int_{\Omega} \left( c_m-c\right) \nabla \cdot \left( v_m \varphi\right) \mathrm{d}x\, \mathrm{d}t}_{\left( I\right)} +  \underbrace{ \int_0^T \int_{\Omega} c \nabla \cdot \left( \varphi v_m - \varphi v\right)   \mathrm{d}x\, \mathrm{d}t}_{\left( II\right)}.
\end{align*} 
For $\left( I\right) $  we obtain 
\begin{flalign}
	  & \int_0^T \int_{\Omega} \left( c_m-c\right) \nabla \cdot \left( v_m \varphi\right) \mathrm{d}x \mathrm{d}t \notag \\
	& \leq \int_0^T \left\| c_m-c\right\|_{L^{r}(\Omega)} \left\|  \nabla \cdot \left( v_m \varphi\right)\right\| _{L^{r^*}(\Omega)} \mathrm{d}t, \quad \left(\mathrm{Hoelder: }\enspace \frac{1}{r} + \frac{1}{r^*} = 1, \enspace 1<r < \infty \right)  \notag  \\
	& \leq \left\| c_m-c\right\|_{L^p(0,T; L^{r}(\Omega))} \left\|  \nabla \cdot \left( v_m \varphi\right)\right\| _{L^{p^*}(0,T;L^{r^*}(\Omega))} , \quad \left(\mathrm{Hoelder: }\enspace \frac{1}{p} + \frac{1}{p^*} = 1, \enspace 1<p \leq 2 \right) \notag  \\  
	& \leq  \left\| c_m-c\right\|_{L^p(0,T; L^{r}(\Omega))} \left( \left\|  \nabla \varphi \cdot v_m\right\| _{L^{p^*}(0,T;L^{r^*}(\Omega))} + \left\| \varphi \nabla \cdot \left( v_m \right)\right\| _{L^{p^*}(0,T;L^{r^*}(\Omega))}\right)\\
	\label{eq:i570}
	& \leq  \left\| c_m-c\right\|_{L^p(0,T; L^{r}(\Omega))} \left(\underbrace{ \left\|  \nabla \varphi \cdot v_m\right\| _{L^{p^*}(0,T;L^{r^*}(\Omega))} }_{(a)}\right. \notag \\
	&+\left. \underbrace{\left\| \varphi \right\|_{L^{p^*s^*}(0,T;L^{r^*} (\Omega))}}_{(b)} \underbrace{\left\| \nabla \cdot \left( v_m \right)\right\| _{L^{p^*s}(0,T;L^{r^*}(\Omega))}}_{(c)}\right) \quad 
\;\; \left(\enspace \frac{1}{p^*s} + \frac{1}{p^*s^*} = \frac{1}{p^*}, \enspace 1<s < \infty\right) \,.
\end{flalign}
We note that $(b)$ is bounded as $\varphi$ is a test function. Consider now $(c)$: $\left\| \nabla \cdot \left( v_m\right) \right\| _{L^{p^*s}(0,T; L^{l^*k}(\Omega))}$ is bounded by $k_\theta$ by assumption. We thus need an embedding 
\begin{equation*}
	L^{p^*s}(0,T; L^{l^*k}(\Omega))\hookrightarrow L^{p^*s}(0,T; L^{r^*}(\Omega))
\end{equation*} and therefore $r^* \leq l^*k$ which is equivalent to $r \geq \frac{l^*k}{l^*k-1}$. We need to adjust the bounds on $k$  as follows. 
\begin{itemize}
	\item Case $n = 2$: We have $1 \leq r < 2$ and $l^* \geq 2$. 
    $1 \leq \frac{l^*k}{l^*k-1}$ is always fulfilled, but in order to guarantee $\frac{l^*k}{l^*k-1} <2$, we have to restrict $k$ such that $\left[\frac{l^*k}{l^*k-1},2 \right) \neq \emptyset$. This is ensured $l^*k>2$, which is equivalent to $k>1$, as $l^* \geq 2$.
	\item Case $n = 3$: We have $1 \leq r < \frac{3}{2}$ and $l^* \geq 3$. 
    $1 \leq \frac{l^*k}{l^*k-1}$ is always fulfilled, but in order to guarantee $\frac{l^*k}{l^*k-1} <\frac{3}{2}$, we have to restrict $k$ such that $\left[\frac{l^*k}{l^*k-1},\frac{3}{2} \right) \neq \emptyset$. This is ensured $l^*k>3$, which is equivalent to $k>1$, as $l^* \geq 3$.   
\end{itemize}
It remains to show that $(a)$ is bounded. This holds as $\varphi$ is a test function and thus $\nabla \varphi$ is bounded and $\left\| v\right\|_{L^\infty(\Omega)^n} \leq k_\infty$. We showed that $(I) \longrightarrow 0 $ as $m \longrightarrow \infty$. Now consider $(II)$, i.e.
\begin{flalign}
 \quad	\int_0^T \int_{\Omega} c \nabla \cdot \left( \varphi v_m - \varphi v\right)   \mathrm{d}x\, \mathrm{d}t 
& = \underbrace{\int_0^T \int_{\Omega} c \varphi \nabla \cdot \left( v_m-v\right) \mathrm{d}x\, \mathrm{d}t}_{(a)}+ 	\underbrace{\int_0^T \int_{\Omega}c   \left( v_m-v\right)\cdot  \nabla \varphi\, \mathrm{d}x \mathrm{d}t}_{(b)}.
\label{eq:II574}
\end{flalign}
 Consider $(a)$ first. By assumption, $\nabla \cdot \left( v_m-v\right) \in L^{p^*s}\left( 0,T;L^{l^*k}(\Omega)\right)$. We need to show $c \varphi \in \left( L^{p^*s}\left( 0,T;L^{l^*k}(\Omega)\right) \right)^*$. Starting from $c \in L^p\left( 0,T; \mathrm{BV}(\Omega)\right) $, the first embedding needed is $\mathrm{BV}(\Omega) \hookrightarrow L^{\frac{l^*k}{l^*k-1}}(\Omega) = L^{\left( l^*k\right) ^*}(\Omega)$. 
 \begin{itemize}
 	\item Case $n = 2$: $\mathrm{BV}(\Omega) \hookrightarrow L^{\frac{l^*k}{l^*k-1}}(\Omega)$ exists continuously for $1 \leq \frac{l^*k}{l^*k-1} \leq 2$. This is fulfilled for $l^*k\geq 2$, which is ensured by $l^*\geq 2$ and $k>1$.
 	\item Case $n = 3$: $\mathrm{BV}(\Omega) \hookrightarrow L^{\frac{l^*k}{l^*k-1}}(\Omega)$ exists for $1 \leq \frac{l^*k}{l^*k-1} \leq 1.5$. Again, this holds for $l^*k \geq 3$, which is ensured by $l^* \geq 3$ and $k>1$.
 \end{itemize} 
Note that $\left( p^*s\right) ^* = \frac{ps}{ps-p+1}$ and $\frac{ps}{ps-p+1}<p$ if $1<p$, which again holds by assumption. \\
In summary, we have
\begin{equation*}
	L^p\left( 0,T;\mathrm{BV}(\Omega)\right) \hookrightarrow 
	L^p\left( 0,T;L^{\left( l^*k\right) ^*}(\Omega)\right) \hookrightarrow
	L^{\left( p^*s\right) ^*}\left( 0,T;L^{\left( l^*k\right) ^*}(\Omega)\right).
\end{equation*}
As $\varphi$ and its derivatives are bounded, $c\varphi \in L^{\left( p^*s\right) ^*}\left( 0,T;L^{\left( l^*k\right) ^*}(\Omega)\right)$. Moreover, from $v_m \overset{* \phantom{a}}{\rightharpoonup} v$ it follows that $\nabla \cdot v_m \overset{* \phantom{a}}{\rightharpoonup} \nabla \cdot v$ and thus altogether 
\begin{equation*}
	\int_0^T \int_{\Omega} c \varphi \nabla \cdot \left( v_m-v\right) \mathrm{d}x \mathrm{d}t \longrightarrow 0 \enspace \mathrm{for } \enspace m \longrightarrow \infty. 
\end{equation*}
Considering $(b)$, we denote first that 
\begin{equation*}
	\mathrm{BV}\left( \Omega \right) \hookrightarrow L^1\left( \Omega \right) \enspace \Rightarrow \enspace L^p\left( 0,T; \mathrm{BV}(\Omega)\right) \hookrightarrow L^p\left( 0,T; L^1(\Omega)\right) \hookrightarrow L^1\left( 0,T; L^1(\Omega)\right).
\end{equation*}
Therefore, we have $c \in L^1\left( 0,T; L^1(\Omega)\right)$ and by the boundedness of $\varphi$ and its derivatives, we obtain $c\nabla \varphi \in L^1\left( 0,T; L^1(\Omega)^n\right)$. Moreover, by $\left\| v\right\|_{L^\infty(\Omega)^n} \leq k_\infty$ we see that $\left( v_m-v\right) \in L^\infty\left( 0,T; L^\infty(\Omega)^n\right) $. Now by 
\begin{equation*}
	 L^\infty\left( 0,T; L^\infty(\Omega)^n\right) \cong \left( L^1\left( 0,T; L^1(\Omega)^n\right)\right) ^*
\end{equation*}  
and the weak-$^*$ convergence of $v_m$ to $v$, we reach
\begin{equation*}
	\int_0^T \int_{\Omega}c  \nabla \varphi \cdot \left( v_m-v\right)  \mathrm{d}x \mathrm{d}t\longrightarrow 0 \enspace \mathrm{for } \enspace m \longrightarrow \infty.
\end{equation*}
Combining all the results yields the convergence of the constraint 
\begin{equation*}
	\int_0^T \int_{\Omega} \left( \left( \left( \dd{t}c\right)_m -  \nabla c_m \cdot v_m \right) - \left( \dd{t}c  - \nabla c \cdot v\right)\right)  \varphi 	 \mathrm{d}x \mathrm{d}t \longrightarrow 0 \enspace \mathrm{for } \enspace m \longrightarrow \infty.
\end{equation*}
\underline{\textit{Convergence of the constraint: Mass conservation}:}\\
	Let $(c_m, v_m)\in L^{\hat{p}}\left( 0,T; BV\left( \Omega\right) \right) \times L^q\left( 0,T; BV\left( \Omega\right)^n \right), m\in  \N,$ be an admissible sequence (i.e., $c_m\in L^p(0,T,BV(\Omega))$, $\left\| v_m\right\|_{L^\infty(0,T;L^\infty(\Omega)^n)} \leq k_\infty$, $\left\Vert \nabla\cdot v_m\right\Vert _{\theta}\leq k_{\theta}$ , $ m_2\left(c,v\right)=0 \text{ in } \mathcal{D}'(\Omega \times [0,T])$), which also fulfills $(c_m,v_m) \in S_\nu$ for some $\nu \in \mathbb{R}$. Then $c_m$ and $v_m$ are bounded and it exist $c$ and $v$ such that by passing over to a subsequence (again denoted by $c_m$, $v_m$) we have 
\begin{equation*}
	c_m \overset{* \phantom{a}}{\rightharpoonup} c, \quad v_m \overset{* \phantom{a}}{\rightharpoonup} v.
\end{equation*}%
We want to show that 	
	\begin{equation*}
		\left( \dd{t}c\right) _m + \nabla \cdot\left(  c_m  v_m\right)  \longrightarrow \dd{t}c + \nabla \cdot \left( c  v\right) \quad \text{in } \mathcal{D}'(\Omega \times [0,T]) , 
	\end{equation*}
at least in a distributional sense. 
We start again with a bound on $\dd{t}c$:  
\begin{flalign}
	\left| \int_0^T \int_{\Omega} \dd{t}c \varphi \mathrm{d}x \mathrm{d}t\right| & = 	\left| \int_0^T \int_{\Omega} - \nabla \cdot \left( cv\right)  \varphi \mathrm{d}x \mathrm{d}t\right| 
	=	\left| \int_0^T \int_{\Omega}  \left( cv\right) \cdot \nabla \varphi \mathrm{d}x \mathrm{d}t\right| \notag \\
	& \leq \int_0^T \int_{\Omega} \left|  \left( cv\right) \cdot  \nabla \varphi\right|  \mathrm{d}x \mathrm{d}t  \notag  \\
	& \leq \int_0^T \left\| c\right\| _{L^l(\Omega)}\left\| v \cdot \nabla \varphi\right\| _{L^{l^*}(\Omega)}\mathrm{d}t, \quad \left( \mathrm{Hoelder: }\enspace \frac{1}{l} + \frac{1}{l^*} = 1, \enspace l\leq \frac{n}{n-1}\right) ,   \notag  \\
	& \leq k_\infty k_c  \left\| \varphi\right\| _{L^{p^*}(0,T;W^{1,l^*}(\Omega))} , \quad \left( \mathrm{c.f. \enspace \cref{eq:II545} - \cref{eq:II551}}\right) .
\end{flalign}
We thus know that $\dd{t}c$ acts as a bounded linear functional on $L^{p^*}\left( 0,T; W^{1,l^*}(\Omega)\right) $ and thus $\dd{t}c \in L^{p}\left( 0,T; W^{-1,l}(\Omega)\right) $.
The Lemma of Aubin-Lions can be applied similarly to the optical flow case and yields strong convergence of $c_m \longrightarrow c $ in $L^p(0,T; L^r(\Omega))$ with the same constraints on $r$. 
Also, the arguments for the convergence of the time derivative 
\begin{equation*}
	-\int_0^T \int_{\Omega} \left( c_m - c\right) \dd{t}\varphi \mathrm{d}x \mathrm{d}t \longrightarrow 0 \quad \mathrm{for } \enspace m \longrightarrow \infty.
\end{equation*}
are the same as in the optical flow case. 
We now come to the product term. It remains to show that 
\begin{equation*}
	\nabla \cdot \left( c_mv_m\right) \rightharpoonup \nabla \cdot \left( cv\right) .
\end{equation*}
\begin{flalign}
	- \int_0^T \int_{\Omega}&\left( \nabla \cdot \left( c_m v_m\right)  - \nabla \cdot \left( cv\right) \right) \varphi \mathrm{d}x \mathrm{d}t = \int_0^T \int_{\Omega}\left( c_mv_m - cv \pm cv_m\right) \cdot \nabla \varphi \mathrm{d}x \mathrm{d}t \notag \\
	&= \int_0^T \int_{\Omega}\left( \left( c_m-c\right) v_m + c\left( v_m-v\right) \right) \cdot \nabla \varphi \,\mathrm{d}x\, \mathrm{d}t \notag \\
	& \leq \int_0^T \left\| c_m-c\right\|_{L^r(\Omega)}\left\| v_m \cdot \nabla \varphi \right\|_{L^{r^*}(\Omega) } + \int_{\Omega} c\left( v_m-v\right)  \cdot \nabla \varphi \mathrm{d}x \mathrm{d}t, \quad \left( \mathrm{ }\enspace \frac{1}{r} + \frac{1}{r^*} = 1\right) ,  \notag \\
	& \leq \underbrace{\left\| c_m-c\right\|_{L^p(0,T;L^r(\Omega))}\left\| v_m \cdot \nabla \varphi \right\|_{L^{p^*}(0,T;L^{r^*}(\Omega)) } }_{\longrightarrow 0 \quad  \mathrm{for} \enspace m \longrightarrow \infty \enspace \left( \mathrm{cf.} \enspace\cref{eq:i570} (a)\right) } + \underbrace{\int_0^T \int_{\Omega} c\left( v_m-v\right)  \cdot \nabla \varphi \mathrm{d}x \mathrm{d}t}_{\longrightarrow 0 \quad  \mathrm{for} \enspace m \longrightarrow \infty \enspace \left( \mathrm{cf.} \enspace \cref{eq:II574} (b)\right)}.
\end{flalign}

\end{proof}
\section{Extension Remark~\ref{rem:regularization_properties}}
\label{app:extended_discussion_rem}

Analysis of the regularization properties of the unconstrained approach \eqref{eq:min_prob2} might be carried out by exploiting the established theory for Tikhonov-type regularization in Banach spaces in the nonlinear problem setting (see, e.g., \cite{hofmann2007convergence, schuster2012regularization}). 
Even if the image reconstruction problem with respect to $c$ is linear, the reconstruction of the tuple $(c,v)$ is nonlinear due to the PDE constraint defined by the motion prior $m_i$.
Regularization properties of the proposed method can be guaranteed by suitable choices of penalty terms ${S}$ and ${R}$, e.g., in line with the previous setting are choices such that $\| {R}(c(\cdot)) \|_{L^1(0,T)} $ and $\| {S}(v(\cdot)) \|_{L^1(0,T)} $ are proper, convex, and weak-$^\ast$ lower semi-continuous functionals with respect to $(c,v)\in  L^{\hat{p}}( 0,T; BV(\Omega))  \times L^q( 0,T; BV(\Omega)^n)$.
The constraint $\|v\|_{L^\infty(0,T;L^\infty(\Omega)^n)} \leq k_\infty $ is then again sufficient to guarantee weak-$^\ast$ closedness of the sublevel sets (see the proof of Theorem~\ref{thm:existence}).
The crucial and more sophisticated remaining part is to show that the nonlinear operator $F:\mathrm{dom}(F) \subset U \rightarrow L^2(0,T,Y) \times L^s(0,T,L^r(\Omega)) $, for certain $s,r\geq 1$, with 
\begin{equation}
    (c,v) \mapsto (A(c(\cdot),\cdot),m_i(c,v))  
\end{equation}
is weak-$^\ast$-to-weak continuous with respect to a suitable space $$U\subset ( L^{\hat{p}}( 0,T; BV(\Omega)) \times L^q( 0,T; BV(\Omega)^n))$$ and a suitable weak-$^\ast$ closed domain $\mathrm{dom}(F)$ of $F$.
In the first component one has to deal with a bounded linear operator mapping to the reflexive Banach space $L^2(0,T;Y)$. 
It would thus be sufficient to verify that the linear operator is weak-$^\ast$-to-weak-$^\ast$ continuous with respect to $c$. 
For the  second component we need weak-$^\ast$-to-weak continuity with respect to the tuple.
The interplay between weak-$^\ast$-to-weak continuity and a suitable choice of spaces replaces the motion constraint being fulfilled in a distributional sense in the setting of Theorem~\ref{thm:existence}. Choosing $L^s(0,T,L^r(\Omega))$ includes the motion constraint in a stronger sense which requires further regularity of $c$, e.g., we need existence of $\frac{\partial}{\partial t}c$ by making suitable assumptions on $U$ implying weak convergence of the derivatives of $c$.
A simple but more restrictive example would be the reflexive space $U=H^1(0,T;H^1(\Omega)) \times L^2(0,T;L^2(\Omega))$ (where weak-$^\ast$ and weak convergence are equivalent) and $p=q=r=s=2$ where $\mathrm{dom}(F)=\{ (c,v)\in U| \|v\|_{L^\infty(0,T;L^\infty(\Omega)^n)} \leq k_\infty \land \|\nabla c\|_{L^\infty(0,T;L^\infty(\Omega)^n)} \leq k'_\infty  \}$. 
A more general investigation in this direction is beyond the scope of the present work but under these assumptions the regularization properties would then be given by the general theory for noisy measurements $(u^\delta,0)$, see, for example \cite{hofmann2007convergence}.
This perspective on PDE-constraint parameter identification in the dynamic setting has also been considered as an all-at-once approach to dynamic inverse problems from a more general point of view \cite{Kaltenbacher17,Tram19}.

\section{Simulation parameters}
\label{app:sim_par}
The scanner is modeled as follows: 
The drive-field frequencies are given by $f_x = \qty{2.5}{\mega\hertz}/102$, $f_y = \qty{2.5}{\mega\hertz}/96$ and $f_z = \qty{2.5}{\mega\hertz}/99$ with an amplitude of \qty{14}{\milli\tesla\per \mu_0}, resulting in a repetition time of \qty{21.54}{\milli\second}. The selection field gradient strength is \qty{0.5}{\tesla\per\meter\per\mu_0} in $x$- and $y$-direction and \qty{1.0}{\tesla\per\meter\per\mu_0} in $z$-direction. The magnetic nanoparticles are modeled as stated in \cref{tab:particle_pars}. 

\begin{table}[tbhp]
\footnotesize
    \caption{Physical constants and parameters for modeling the tracer particles. }
    \label{tab:particle_pars}
    \centering
    \begin{tabular}{|l|c|}
        \hline
        Parameter & Value \\
        \hline
        Permeability constant $\mu_0$ & $4\pi\cdot10^{-7}$\unit{N\per A^2}\\
        \hline
        Particle core diameter & $2\cdot 10^{-8}$\unit{\meter} \\
        Core saturation magnetization & \qty{0.6}{\tesla \per \mu_0}\\
        
        \hline
    \end{tabular}
\end{table}

We model ideal magnetic fields and use the Langevin magnetization model. 
In the forward operator voltage measurements for three receive channels in respective unit vector directions are used. To avoid inverse crime, simulations are performed on a finer grid compared to the reconstruction, i.e. we use $40^3$ voxels of size \qtyproduct{1 x 1 x 0.5}{\mm} for simulation and $20^3$ voxels of size \qtyproduct{2 x 2 x 1}{\mm} for reconstruction.
For reconstruction, the time-dependent signals are transformed to Fourier domain, a frequency selection is performed and real an imaginary part are split, i.e., concatenated.

\section{Information on parameter search regions for the numerical experiments}
\label{app:par_search}

In each subsection in \Cref{sec:numerical}, we stated the parameters used for the illustrative reconstructed image sequences. These were obtained by detailed parameter searches in each case and for each algorithm. 

\subsection{The simulated data experiments}

For the Kaczmarz algorithm, we tested for the early stopping index $k$ and the Tikhonov regularization parameter $\lambda$. The parameter $\lambda$ was tested in the range between $10^{-5}$ and $10^3$, $k$ in between 1 and 10. 

Frame-by-frame SPDHG algorithm has two parameters, $\alpha_1$ corresponding to the $L^1$-penalty term and $\alpha_2$ corresponding to the TV penalty term. The testing range for $\alpha_1$ was in between $10^{-3}$ and $10^{-1}$, for $\alpha_2$ in between $10^{-9}$ and $10^{-5}$. 

Parameter tests for the joint approaches were performed on 10 time steps simultaneously within the ranges stated in \cref{tab:par_search_simulateddata}.   

\begin{table*}[tbhp]
\footnotesize
    \caption{Parameter search area for the joint approaches for the simulated data.}
    \label{tab:par_search_simulateddata}
    \centering
    \begin{tabular}{|l|c|c|}
        \hline
        Parameter & Min. value & Max. value  \\
        \hline
        $\alpha_1$ & $10^{-3}$ & $10^{-1}$\\
        $\alpha_2$ & $10^{-8}$ & $10^{-5}$ \\
        $\beta$ & $10^{-2}$ & 1 \\
        $\gamma$ & $10^{-5}$ & 1 \\
        \hline
    \end{tabular}
\end{table*}
The resulting image sequences for each parameter combination was tested for the SSIM value. The best parameter combination for either criterion is stated in \cref{tab:best_parameter_simdata_ssim}. 

The resulting parameters are stated below for completeness. 
\begin{table*}[tbhp]
\footnotesize
    \centering
        \caption{The best parameters obtained for the reconstruction of the simulated data in terms of SSIM value. }
        \label{tab:best_parameter_simdata_ssim}
    \begin{tabular}{|l|l|c|c|c|c|c|c|}
    \hline
       Algorithm  & Motion model & Early stopping & $\lambda$ & $\alpha_1$ & $\alpha_2$ & $\beta$ & $\gamma$ \\
       \hline
       Kaczmarz  & None & $k=3\hphantom{^2}$ & $10^3$ &  &  &  &  \\
       SPDHG & None & $k=2\cdot10^3$ & & $10^{-2}$& $10^{-7}$& & \\
       Joint, $L^1$-D & Optical Flow & $k=10^2$ & & $10^{-2}$ & $10^{-7}$ & $10^{-1}$ & $10^{-4}$  \\
       Joint, $L^2$-D & Optical Flow & $k=10^2$ & & $10^{-2}$ & $10^{-7}$ & $10^{-1}$& $10^{-4}$  \\
       Joint, $L^1$-D & Mass Conservation & $k=10^2$ & & $10^{-2}$ & $10^{-8}$ & $10^{-1}$& $10^{-4}$  \\
       Joint, $L^2$-D & Mass Conservation & $k=10^2$ & & $10^{-2}$ & $10^{-5}$ & $10^{-1}$& $10^{-4}$  \\
       \hline
    \end{tabular}
\end{table*}

\subsection{The rotation phantom data experiments}

For the Kaczmarz algorithm, we tested for the early stopping index $k$ and the Tikhonov regularization parameter $\lambda$. The parameter $\lambda$ was tested in the range between $10^{-4}$ and 30, $k$ in between 1 and 100. 

Frame-by-frame SPDHG algorithm has two parameters, $\alpha_1$ corresponding to the $L^1$-penalty term and $\alpha_2$ corresponding to the TV penalty term. The testing range for $\alpha_1$ was in between $10^{-4}$ and 1, for $\alpha_2$ in between $10^{-4}$ and 10. 

Parameter tests for the joint approaches were performed on 15 time steps simultaneously within the ranges stated in \cref{tab:par_search_rotation}.   

\begin{table*}[tbhp]
\footnotesize
    \caption{Parameter search area for the joint approaches for the rotation phantom data.}
    \label{tab:par_search_rotation}
    \centering
    \begin{tabular}{|l|c|c|}
        \hline
        Parameter & Min. value & Max. value  \\
        \hline
        $\alpha_1$ & $10^{-2}$ & 10\\
        $\alpha_2$ & $10^{-2}$ & 100 \\
        $\beta$ & $10^{-2}$ & 1 \\
        $\gamma$ & 1 & 200 \\
        \hline
    \end{tabular}
\end{table*}
The resulting image sequences were visually inspected and the most convincing ones were chosen. The search area was iteratively reduced and refined until no visual differences were observed. 

The resulting parameters for the \qty{3}{\hertz} dataset are stated in \cref{tab:bohrerdaten_3hz_parameter} and below for completeness. 
\begin{table*}[tbhp]
\footnotesize
    \centering
        \caption{The parameters used for the reconstruction of the \qty{3}{\hertz} rotation phantom data. }
    \begin{tabular}{|l|l|c|c|c|c|c|c|}
    \hline
       Algorithm  & Motion model & Early stopping & $\lambda$ & $\alpha_1$ & $\alpha_2$ & $\beta$ & $\gamma$ \\
       \hline
       Kaczmarz  & None & $k=10\hphantom{^2}$ & 5.62 &  &  &  &  \\
       SPDHG & None & $k=10^3$ & & $5.0\cdot10^{-1}$& $3.0\cdot 10^{-1}$& & \\
       Joint, $L^1$-D & Optical Flow & $k=10^2$ & & $6.0\cdot10^{-1}$ & $1.0\cdot 10^{-1}$ & $10^{-1}$ & 100  \\
       Joint, $L^2$-D & Optical Flow & $k=10^2$ & & $2.5\cdot10^{-1}$ & $1.0\cdot 10^{+2}$ & $10^{-1}$& 100  \\
       Joint, $L^1$-D & Mass Conservation & $k=10^2$ & & $7.0\cdot10^{-1}$ & $5.0\cdot 10^{-1}$ & $10^{-1}$& 100  \\
       Joint, $L^2$-D & Mass Conservation & $k=10^2$ & & $2.5\cdot10^{-1}$ & $2.5\cdot 10^{-1}$ & $10^{-1}$& 100  \\
       \hline
    \end{tabular}
\end{table*}
Parameter tests for the \qty{1}{\hertz} and \qty{7}{\hertz} datasets are performed analogously and yield similar results.

\subsection{The in-vivo mouse data data experiments}
Motivated by the specific application, we use the mass conservation constrained $L^1$ algorithm. The parameter search area is indicated in \cref{tab:par_search_rotation_mousedata}, tests were performed on 30 time steps simultaneously. 

\begin{table*}[tbhp]
\footnotesize
    \caption{Parameter search area for the mass conservation $L^1$-D algorithm for the in-vivo mouse data reconstruction.}
    \label{tab:par_search_rotation_mousedata}
    \centering
    \begin{tabular}{|l|c|c|}
        \hline
        Parameter & Min. value & Max. value  \\
        \hline
        $\alpha_1$ & $10^{-2}$ & $10^{-1}$\\
        $\alpha_2$ & $10^{-3}$ & 200 \\
        $\beta$ & $10^{-3}$ & $10^{-1}$ \\
        $\gamma$ & 1 & 1000 \\
        \hline
    \end{tabular}
\end{table*}
The resulting reconstruction parameters are given by $\alpha_1= 0.05$, $\alpha_2 =100$, $\beta = 0.1$ and $\gamma = 100$. 

\end{document}